\def\@strippedMR{}
\def\@scanforMR#1#2#3\endscan{
  \ifx#1M\ifx#2R\def\@strippedMR{#3}
  \else\def\@strippedMR{#1#2#3}
  \fi\fi}
\renewcommand\MR[1]{\relax\ifhmode\unskip\spacefactor3000 \space\fi
  \@scanforMR#1\endscan
  MR\MRhref{\@strippedMR}{\@strippedMR}}
\newcommand{\mf}[1]{\mathbb{#1}}
\newcommand{\mc}[1]{\mathcal{#1}}
\DeclareMathOperator{\Var}{\mathrm{Var}}
\newcommand{\norm}[1]{\left\Vert#1\right\Vert}
\newcommand{\abs}[1]{\left\vert#1\right\vert}
\newcommand{\set}[1]{\left\{#1\right\}}
\newcommand{\eps}{\varepsilon}
\newcommand{\utimes}{\kern0.05em\buildrel{\times}\over{\rule{0em}{0.004em}}\kern-0.9em\cup \kern0.2em}
\newcommand{\sutimes}{\mathrel{\kern0em\buildrel{\mathsf{x}}\over{\rule{0em}{0.0em}} \kern-0.35em\cup\kern-0.0em}}
\newtheorem{theorem}{Theorem}[section]
\newtheorem{lemma}[theorem]{Lemma}
\newtheorem{proposition}[theorem]{Proposition}
\newtheorem{corollary}[theorem]{Corollary}
\theoremstyle{definition}
\newtheorem{remark}[theorem]{Remark}
\newtheorem{definition}[theorem]{Definition}
\title[Limit theorems for monotonic convolution]{Limit theorems for monotonic convolution and the Chernoff product formula}
\author{Michael Anshelevich}
\thanks{The first author was supported in part by NSF grants DMS-0900935 and DMS-1160849.}
\address{Department of Mathematics, Texas A\&M University, College Station, TX 77843-3368}
\email{manshel@math.tamu.edu, jwilliams@math.tamu.edu}
\author{John D. Williams}
\subjclass[2010]{Primary 46L53; Secondary 30D05, 46L54, 47D06, 60B10}
\date{\today}
\begin{document}

\begin{abstract}
Bercovici and Pata showed that the correspondence between classically, freely, and Boolean infinitely divisible distributions holds on the level of limit theorems. We extend this correspondence also to distributions infinitely divisible with respect to the additive monotone convolution. Because of non-commutativity of this convolution, we use a new technique based on the Chernoff product formula. In fact, the correspondence between the Boolean and monotone limit theorems extends from probability measures to positive measures of total weight at most one. Finally, we study this correspondence for multiplicative monotone convolution, where the Bercovici-Pata bijection no longer holds.
\end{abstract}


\maketitle

\section{Introduction}

This article studies limit theorems for measures, but first we state a corollary which can be expressed purely in terms of analytic functions. Let
\[
\mc{A} = \set{F : \mf{C}^+ \rightarrow \mf{C}^+ \text{ analytic}, \lim_{y \uparrow \infty} F(iy)/(i y) = 1}.
\]
Note that $\mc{A}$ is closed under composition. We say that $F$ is \emph{infinitely divisible} if $F \in \mc{A}$ and for any $n \in \mf{N}$, there exists $g_n \in \mc{A}$ such that
\[
F = \underbrace{g_n \circ g_n \circ \ldots \circ g_n}_{n}.
\]

\begin{theorem} \cite[Proposition~3.8]{Belinschi-Thesis}
$F \in \mc{A}$ is infinitely divisible if and only if there exist $\set{F_t : t \geq 0} \subset \mc{A}$ which form a semigroup under composition,
\[
F_t \circ F_s = F_{t+s}, \quad F_1 = F,
\]
and is continuous in the topology of uniform convergence on compact sets. In this case we write $F^{\circ t} = F_t$; each $F^{\circ t}$ is uniquely defined.
\end{theorem}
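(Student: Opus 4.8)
The direction ``$\Leftarrow$'' is trivial: given such a semigroup, set $g_n := F_{1/n} \in \mc A$, so that $g_n^{\circ n} = F_1 = F$. For ``$\Rightarrow$'' the plan is to realize the semigroup as the flow of an infinitesimal generator extracted from the approximating roots $g_n$. I will use throughout the Nevanlinna representation of members of $\mc A$: every $F \in \mc A$ is of the form $F(z) = \alpha + z + \int_{\mf R}\frac{1+xz}{x-z}\,d\sigma(x)$ with $\alpha \in \mf R$ and $\sigma$ a finite positive measure (the normalization $\lim_{y\to\infty}F(iy)/(iy)=1$ being precisely what forces the linear coefficient to equal $1$); in particular $F-\mathrm{id}$ is a Pick function and $\Im F(z) \ge \Im z$ on $\mf C^+$. (Equivalently one may pass to probability measures, writing $F = F_\mu$ for the reciprocal Cauchy transform of $\mu$, under which composition becomes the associative but non-commutative monotone convolution, and ``$F$ infinitely divisible'' becomes ``$\mu$ admits monotone convolution roots of every order''; but the argument can be run purely analytically.)

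\emph{First step: the roots are infinitesimal, $g_n \to \mathrm{id}$ locally uniformly on $\mf C^+$.} Write $g_n(z) = z + b_n + \int \frac{1+xz}{x-z}\,d\rho_n(x)$ with $b_n \in \mf R$ and $\rho_n \ge 0$ finite. Since $\mc A$ is a normal family (Montel, the inequality $\Im g \ge \Im z$ excluding real-constant and escaping subsequential limits) and $g_n^{\circ n} = F$ is held fixed, a Denjoy--Wolff-type argument on the iterates shows that every subsequential limit of $(g_n)$ is the identity; evaluating at $z = i$, where $g_n(i) = b_n + i\bigl(1 + \rho_n(\mf R)\bigr)$, this already gives $b_n \to 0$ and $\rho_n(\mf R) \to 0$. \emph{Second, central step: the rescaled roots converge.} One shows that $\{n b_n\}$ is bounded and $\{n\rho_n\}$ is tight with bounded total mass, whence, along a subsequence, $n b_n \to \gamma \in \mf R$ and $n\rho_n \to \sigma$ weakly for a finite positive measure $\sigma$, so that $n(g_n-\mathrm{id}) \to A$ locally uniformly, where $A(z) := \gamma + \int \frac{1+xz}{x-z}\,d\sigma(x)$. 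Being a locally uniform limit of the Pick functions $n(g_n-\mathrm{id})$ with no escape of mass to infinity, $A$ is a Pick function with vanishing linear coefficient, i.e.\ a semicomplete holomorphic vector field on $\mf C^+$ of the type that generates $\mc A$-semigroups.

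\emph{Integration and identification.} The autonomous initial value problem $\partial_t F_t(z) = A(F_t(z))$, $F_0(z) = z$, then has a unique solution $\{F_t : t \ge 0\}$ existing for all $t \ge 0$, with each $F_t \in \mc A$ (indeed $\partial_t \Im F_t = \Im A(F_t) \ge 0$ keeps $\Im F_t \ge \Im z$, and the vanishing linear coefficient of $A$ preserves the normalization), with $F_t \circ F_s = F_{t+s}$, and with $t \mapsto F_t$ continuous in the compact-open topology --- this is the Berkson--Porta-type description of one-parameter semigroups of analytic self-maps of $\mf C^+$, restricted to $\mc A$. To finish I must show $F_1 = F$: since $g_n = \mathrm{id} + \tfrac1n A + o(1/n)$ in the locally uniform sense just established, the Euler iterates $g_n^{\circ n}$ converge to the time-one flow $F_1$ of $A$ --- this is exactly a Chernoff product formula statement, the very mechanism the rest of the paper develops --- and since $g_n^{\circ n} = F$ for all $n$, we conclude $F = F_1$, so $F^{\circ t} := F_t$ does the job. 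For uniqueness: any continuous composition semigroup through $F$ has a right derivative at $t = 0$ which is a Pick function recovered from $F$ by exactly this limiting procedure, hence equal to $A$; uniqueness for the ODE then makes $F^{\circ t}$ well defined.

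The crux --- and the only genuinely hard point --- is the second step: producing, out of mere $n$-divisibility, an honest generator $A$, i.e.\ establishing boundedness of $\{nb_n\}$ and tightness with bounded mass of $\{n\rho_n\}$, together with the identification of the limits $(\gamma,\sigma)$ in terms of $F$. This is the precise analogue of the point where, in the classical and free settings, one invokes the L\'evy--Khinchine-type limit theorems for infinitely divisible laws; here it has to be carried out for the non-commutative monotone convolution, where factors cannot be freely reordered. A secondary subtlety is the convergence $g_n^{\circ n} \to F_1$ in the integration step: upgrading ``$F$ is a subsequential limit of the Euler iterates'' to ``$F$ equals the flow at time one'' is a genuine Chernoff-product-formula phenomenon rather than a formal manipulation.
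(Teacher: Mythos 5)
The paper does not prove this theorem; it is cited verbatim from Belinschi's thesis (\cite[Proposition~3.8]{Belinschi-Thesis}) and then used (together with Proposition~\ref{Prop:Generator}) as an input to derive Corollary~\ref{Corollary:Functions}. So there is no proof in the paper to compare against, and I must assess your argument on its own.

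Your outline is in the right spirit --- the ``$\Leftarrow$'' direction is correctly trivial, and for ``$\Rightarrow$'' the strategy of extracting an infinitesimal generator from the roots $g_n$, integrating it to a flow, and identifying $F$ with the time-one map is exactly the right one --- but as written it is a roadmap, not a proof. The step you yourself call the crux (boundedness of $\set{n b_n}$, tightness and bounded mass of $\set{n\rho_n}$, hence convergence $n(g_n - \mathrm{id}) \to A$ along a subsequence) is asserted rather than established, and it is genuinely the whole content of the forward direction; nothing you write explains how to get it from $g_n^{\circ n} = F$. Notice that this is precisely what the paper's own Lemma~\ref{Tech-lemma} and the converse half of Proposition~\ref{Prop:Additive} accomplish in the monotone setting (tightness of $\set{\mu_n^{\rhd j}}$, the bound $k_n\sigma_n(\mathbb{R})$ bounded and $k_n\sigma_n$ tight, $k_n\gamma_n$ bounded), specialized here to the constant sequence $g_n^{\circ n} \equiv F$. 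Similarly, the convergence of the Euler iterates $g_n^{\circ n}$ to the time-one flow of $A$ --- the ``secondary subtlety'' you name --- is not free: it is the content of the paper's Chernoff argument (Propositions~\ref{Prop:Chernoff}, \ref{Prop:Variance}) or, alternatively, of the ODE stability estimate in Lemma~\ref{Lemma:Distance}, and again you invoke the conclusion without supplying the work. In effect, your plan would, if carried out, reconstruct a good part of the paper's Section~3 machinery; as it stands, the two hard steps are left open.

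One smaller point: the ``Denjoy--Wolff-type argument'' for $g_n \to \mathrm{id}$ is shakier than it looks. It works cleanly when a subsequential limit $g$ has $\rho \neq 0$ (then $\Im g^{\circ j}(i)$ strictly increases and one can play the iterates off against the fixed value $\Im F(i)$), but the case $g(z) = z + b$, $b \neq 0$, does not push the imaginary part up at all, and ruling it out requires controlling the real parts of all $n$ iterates $g_n^{\circ j}(i)$, not just the first --- which is again the tightness estimate you deferred. In practice it is cleaner to get $g_n \to \mathrm{id}$ as a corollary of the crux step ($\rho_n(\mathbb{R}) = O(1/n)$, $b_n = O(1/n)$) rather than as an independent preliminary.
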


Moreover, according to Proposition~\ref{Prop:Generator} below, there exists a function $\Phi$ with $\Phi(z) + z \in \mc{A}$ such that $\frac{\partial F_t}{\partial t} = \Phi(F_t)$. In terms of analytic functions, our main result is the following

\begin{corollary}
\label{Corollary:Functions}
Fix $\set{g_n : n \in \mf{N}}, F \in \mc{A}$, $F$ infinitely divisible, and a sequence of positive integers $k_1 < k_2 < \ldots$. Then
\[
\underbrace{g_n \circ g_n \circ \ldots \circ g_n}_{k_n} \rightarrow F
\]
uniformly on compact sets if and only if
\[
k_n \left( g_n(z) - z \right) \rightarrow \Phi,
\]
for $\Phi$ as above.
\end{corollary}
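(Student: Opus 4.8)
The plan is to view Corollary~\ref{Corollary:Functions} as a nonlinear Chernoff product formula on the topological semigroup $(\mc{A},\circ)$, proved directly from two structural facts. The first is metric: composition in $\mc{A}$ is non-expansive for the hyperbolic metric on $\mf{C}^+$ (Schwarz--Pick), every $G\in\mc{A}$ satisfies $\mathrm{Im}\,G(z)\geq\mathrm{Im}\,z$, and the normalization $\lim_{y\to\infty}G(iy)/(iy)=1$ together with a Gr\"onwall estimate confines the whole orbit $\set{g_n^{\circ j}(K):0\leq j\leq k_n}$ of a compact $K\subset\mf{C}^+$ to a single compact $K'\subset\mf{C}^+$, on which the Euclidean and hyperbolic metrics are comparable. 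The second is Proposition~\ref{Prop:Generator}: the semigroup $\set{F_t}$ attached to $F$ satisfies $\partial_t F_t=\Phi(F_t)$ with $F_0=\mathrm{id}$, so, uniformly on compacts, $F_{1/k}(z)=z+\tfrac1k\Phi(z)+o(1/k)$. The argument is organized around the identity $F=F_1=(F_{1/k_n})^{\circ k_n}$, so we always compare $g_n^{\circ k_n}$ with the $k_n$-fold iterate $h_n^{\circ k_n}$ of $h_n:=F_{1/k_n}$, never with $F$ directly.

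For sufficiency, assume $k_n(g_n-\mathrm{id})\to\Phi$ uniformly on compacts and fix $K$, with $K'$ as above. Telescoping gives
\[
g_n^{\circ k_n}-h_n^{\circ k_n}=\sum_{j=0}^{k_n-1}\Bigl(h_n^{\circ(k_n-1-j)}\circ g_n\circ g_n^{\circ j}-h_n^{\circ(k_n-1-j)}\circ h_n\circ g_n^{\circ j}\Bigr),
\]
and by non-expansiveness of $h_n^{\circ(k_n-1-j)}$ together with comparability of the metrics on $K'$, the $j$-th summand is bounded on $K$ by $C\sup_{w\in K'}\abs{g_n(w)-h_n(w)}$ with $C=C(K')$. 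Writing $g_n-\mathrm{id}=\tfrac1{k_n}(\Phi+\eps_n)$ with $\sup_{K'}\abs{\eps_n}\to0$, and $h_n-\mathrm{id}=\tfrac1{k_n}\Phi+O(1/k_n^2)$ from the integrated ODE, this supremum is $o(1/k_n)$, so the sum is $k_n\cdot C\cdot o(1/k_n)=o(1)$. Since $h_n^{\circ k_n}=F_1=F$, we conclude $g_n^{\circ k_n}\to F$. (Nothing here uses that the objects come from probability measures, only the normalization, which is why the measure-level version of this implication extends to subprobability measures.)

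For necessity, assume $g_n^{\circ k_n}\to F$. One first shows $g_n\to\mathrm{id}$ uniformly on compacts, since the iterates $g_n^{\circ j}$, $0\leq j\leq k_n$, form a hyperbolically non-expansive chain from $\mathrm{id}$ to a convergent sequence and the normalization rules out drift. Writing $g_n$ in its Nevanlinna form $g_n(z)=z+\gamma_n+\int_{\mf{R}}\tfrac{1+xz}{x-z}\,d\sigma_n(x)$ with $\sigma_n\geq0$ finite, we have $k_n(g_n(z)-z)=k_n\gamma_n+\int_{\mf{R}}\tfrac{1+xz}{x-z}\,d(k_n\sigma_n)(x)$, which already has the shape of a generator; if in addition $\set{k_n\gamma_n}$ is bounded and $\set{k_n\sigma_n}$ is bounded and tight on $\mf{R}$, then along any subsequence a further subsequence has $k_n\gamma_n\to\gamma$, $k_n\sigma_n\to\sigma$ weakly, hence $k_n(g_n-\mathrm{id})\to\Psi$ uniformly on compacts with $\Psi(z)=\gamma+\int\tfrac{1+xz}{x-z}\,d\sigma(x)$ and $\Psi(z)+z\in\mc{A}$. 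Applying the sufficiency direction along that subsequence shows $g_n^{\circ k_n}$ converges to the time-$1$ map of the flow $\set{\tilde F_t}\subset\mc{A}$ generated by $\Psi$; this limit is $F$, so that flow is $\set{F_t}$, and by uniqueness of the semigroup attached to $F$ (hence of its generator) $\Psi=\Phi$. Every subsequential limit being $\Phi$, we get $k_n(g_n-\mathrm{id})\to\Phi$.

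The step I expect to be the main obstacle is the tightness and boundedness of $\set{k_n\gamma_n}$ and $\set{k_n\sigma_n}$ in the necessity direction — a ``no mass escapes'' estimate. Mass of $\sigma_n$ escaping to a finite point, or $\gamma_n$ too large, is already visible in $g_n^{\circ k_n}$; mass escaping to $\infty$, which would inject a linear term into $\Psi$ and destroy the normalization, is the delicate case. I intend to control it by evaluating along $z=iy$: the increments $\mathrm{Im}\,g_n^{\circ(j+1)}(iy)-\mathrm{Im}\,g_n^{\circ j}(iy)$ are nonnegative and bounded below in terms of $\sigma_n$, and summing them bounds $k_n\sigma_n(\mf{R})$ and a truncated second moment by $\mathrm{Im}\,g_n^{\circ k_n}(iy)-y$, which stays bounded because $g_n^{\circ k_n}\to F\in\mc{A}$; a companion estimate on real parts controls $k_n\gamma_n$. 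This is the monotone analogue of the classical ``uniform infinitesimality forces bounded truncated variances'' step and is exactly what makes the Bercovici--Pata-type correspondence work, paralleling the Boolean case where the corresponding transforms simply add.
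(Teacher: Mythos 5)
Your sufficiency direction genuinely departs from the paper's. The paper deduces Corollary~\ref{Corollary:Functions} by translating $\mc A$ into $F$-transforms of probability measures and appealing to Theorem~\ref{Theorem:Main}(c)$\Leftrightarrow$(d), whose forward half (Proposition~\ref{Prop:Variance}) is the Chernoff product formula for composition operators on a Banach space of Cauchy transforms (followed by a truncation argument to drop the finite-variance hypothesis). Your telescoping $g_n^{\circ k_n}-(F_{1/k_n})^{\circ k_n}$ controlled by Schwarz--Pick non-expansiveness and local comparability of the hyperbolic and Euclidean metrics would replace that operator-theoretic machinery with a direct complex-analytic estimate; it also dovetails with the paper's own telescoping used in the extension step of Proposition~\ref{Prop:Additive}, which bounds the analogous sum by a derivative bound $M_0$ rather than by hyperbolic contractivity. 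What Chernoff buys the paper is a clean separation between the generator convergence and the resolvent/semigroup machinery; what your route buys is a self-contained proof on the function side that never mentions measures and exposes why only the normalization at $\infty$ (not probability normalization) is used. Both require an orbit-confinement bootstrap; yours is feasible because $\Phi$ has a Nevanlinna representation, hence $\abs{\Phi(z)}=O(1+\abs z)$ on any half-plane $\Im z\geq\delta$, so a Gr\"onwall recursion keeps $g_n^{\circ j}(K)$ in a fixed compact.

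The gap is in the necessity direction, and it is exactly where you flagged it but deeper than your sketch indicates. The inequality you want, that each increment $\Im g_n^{\circ(j+1)}(iy)-\Im g_n^{\circ j}(iy)=\int_{\mf R}\frac{\Im w_j\,(1+t^2)}{\abs{t-w_j}^2}\,d\sigma_n(t)$ (with $w_j=g_n^{\circ j}(iy)$) is bounded \emph{below} by a fixed positive multiple of $\int_{\mf R}\frac{y(1+t^2)}{t^2+y^2}\,d\sigma_n(t)$, is false without a priori control on $w_j$: as $\Im w_j\to\infty$ the density $\Im w_j/\abs{t-w_j}^2$ \emph{decreases} for $t$ near $0$, so the increments can collapse and summing them ceases to control $k_n\sigma_n(\mf R)$. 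One cannot obtain the needed control from ``$g_n\to\mathrm{id}$'' or from non-expansiveness of the chain alone: each step has hyperbolic length of unknown size before the tightness is established, and stacking $k_n$ non-expansive steps gives no upper bound on where $w_j$ lands. The paper closes this loop with a separate argument (Lemma~\ref{Tech-lemma}(a)): assuming $g_n^{\circ k_n}\to F$ and using that each $g_n$ \emph{increases} imaginary part, tightness of the intermediate family $\{g_n^{\circ j}\}$ is proved by contradiction (a measure with escaping mass forces $\Im(F_{\mu^{\rhd j}}(z)-z)\geq c\Im z$, which would propagate up to $j=k_n$ and contradict tightness of the limit). Only then is $\abs{w_j-z}=o(\abs z)$ uniformly in a Stolz angle, giving the ratio $\abs{t-z}^2/\abs{t-w_j}^2\geq1-\eps$ there and validating your increment lower bound. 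Your proposal needs to import this contradiction argument, or an equivalent, before the ``evaluate along $z=iy$ and telescope'' step is legitimate; as written, that step assumes the orbit confinement it is supposed to help establish.

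Two minor points worth tightening. In the sufficiency telescoping, the comparability constants relating hyperbolic and Euclidean distance must be applied on compacts containing both the \emph{inputs} $g_n(w_j),h_n(w_j)$ and the \emph{outputs} $h_n^{\circ(k_n-1-j)}(\cdot)$; the latter are controlled because $h_n^{\circ m}=F_{m/k_n}$ lies in the continuous family $\{F_t:t\in[0,1]\}$, but this should be said. And in the necessity direction, uniqueness of $\Phi$ given $F$ is needed to conclude $\Psi=\Phi$; this is Proposition~\ref{Prop:Generator} (uniqueness of the generator of the semigroup through $F$), which you invoke implicitly and should cite.
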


The proof of this corollary requires us to work with limit theorems for measures rather than analytic functions, which we now explain.

A fundamental result in free probability, due to Bercovici and Pata \cite{BerPatDomains}, is that limit theorems for sums of freely independent random variables are in a precise correspondence with limit theorems for independent random variables.
More specifically, denoting by $\boxplus$ the (additive) free convolution and by $\ast$ the usual convolution,
a $k_n$-fold convolution $\mu_n \boxplus \mu_n \boxplus \ldots \boxplus \mu_n$ converges to a limit \emph{if and only if} a $k_n$-fold convolution $\mu_n \ast \mu_n \ast \ldots \ast \mu_n$ converges to a limit.
The correspondence between the limit measures is known as the Bercovici-Pata bijection, which has a surprisingly concrete form based upon the L\'{e}vy-Hin\u{c}in representations of the various infinitely divisible measures.
In addition to this, the same authors proved that the same result also holds for the (additive) Boolean convolution $\uplus$.

According to \cite{SpeUniv,Ben-Ghorbal-Independence,Muraki-Quasi-universal,Muraki-Natural-products} in addition to the usual, free, and Boolean independence, the only other notion of non-commutative independence with a universal property is the monotonic independence of Muraki \cite{Muraki-Independence}. He defined monotone convolution $\rhd$ for compactly supported measures, and this operation was extended to general probability measures on $\mf{R}$ in \cite{Franz-Monotone-Boolean}. In this article, we are interested in limit theorems with respect to the monotone convolution.

Standard proofs of limit theorems for independent random variables use the method of characteristic functions, based on the observation that (the logarithm of) the Fourier transform is a linearizing transform for the convolution:
\[
\log \mc{F}_{\mu \ast \nu}(\theta) = \log \mc{F}_\mu(\theta) + \log \mc{F}_\nu(\theta).
\]
Free and Boolean convolutions also have linearizing transforms:
\[
\varphi_{\mu \boxplus \nu}(z) = \varphi_\mu(z) + \varphi_\nu(z); \qquad E_{\mu \uplus \nu} (z) = E_\mu(z) + E_\nu(z)
\]
(the notation will be defined in the following section), and their properties are used in the proof of the Bercovici and Pata results. However, the monotone convolution is not commutative, and as such cannot have a linearizing transform.
As a result, a very different approach is necessary to incorporate monotone convolution into this bijection.

In addition to the proof using characteristic functions, Feller (Section IX.7 of \cite{Feller-volume2}) gives an alternative proof of classical limit theorems using semigroups of operators and their generators. It is this approach, based on the Chernoff product formula, that works well for monotonic independence. Note that classical probability deals with commuting random variables, which demands a much simpler variant of these Chernoff style arguments than our non-commutative setting. The idea of using the full power of the Chernoff product formula in a probabilistic setting goes back to Goldstein \cite{Gol76,Goldstain-Correction} (see also \cite{Semigroup-Poisson}).

Besides the central limit theorem and the Poisson limit theorem \cite{Muraki-Independence,Hasebe-Saigo-Monotone-cumulants}, the only other limit theorems in the monotone case of which we are aware are Wang's results on the central limit theorem in the general (non-compactly supported) case \cite{Wang-Monotone-CLT} and on the strict domains of attraction of strictly stable distributions \cite{Wang-Monotone}.
As far as we know, our result is new even in the general Poisson case. In addition to these existing works, Uwe Franz and Takahiro Hasebe have informed us that they are currently developing related results.

Our main result is the addition of part (d) in the following theorem. Various measures $\nu^{\gamma, \sigma}$ are defined in the next section.

\begin{theorem}
\label{Theorem:Main}
Fix a finite positive Borel measure $\sigma$ on $\mathbb{R}$, a real number $\gamma$, a sequence of probability measures $\set{\mu_{n}}_{n \in \mf{N}}$, and a sequence of positive integers
$k_{1} < k_{2} < \cdots $ The following assertions are equivalent:
\begin{enumerate} \item
The sequence $\underbrace{\mu_{n} \ast \mu_{n} \ast \cdots \ast \mu_{n}}_{k_{n}}$ converges weakly to $\nu_{\ast}^{\gamma,\sigma}$;
 \item
The sequence $\underbrace{\mu_{n} \boxplus \mu_{n} \boxplus \cdots \boxplus \mu_{n}}_{k_{n}}$ converges weakly to $\nu_{\boxplus}^{\gamma,\sigma}$;
 \item
The sequence $\underbrace{\mu_{n} \uplus \mu_{n} \uplus \cdots \uplus \mu_{n}}_{k_{n}}$ converges weakly to $\nu_{\uplus}^{\gamma,\sigma}$;
 \item
The sequence $\underbrace{\mu_{n} \rhd \mu_{n} \rhd \cdots \rhd \mu_{n}}_{k_{n}}$ converges weakly to $\nu_{\rhd}^{\gamma,\sigma}$;
\item
The measures
\[
k_{n}\frac{x^{2}}{x^{2}+1}d\mu_{n}(x) \rightarrow \sigma
\]
weakly, and
\[
\lim_{n \uparrow \infty} k_{n} \int_{\mathbb{R}} \frac{x}{x^{2} + 1}d\mu_{n}(x) = \gamma.
\]
\end{enumerate}
\end{theorem}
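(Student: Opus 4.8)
The plan is to reduce the five-fold equivalence to a single analytic statement about reciprocal Cauchy transforms and then attack the new half with the Chernoff product formula. The equivalence of (a), (b), (c) and (e) is the Bercovici--Pata theorem \cite{BerPatDomains} (for (a), the classical limit theorem, cf.\ \cite{Feller-volume2}); the content to be added is (d), and I would obtain it through the equivalence (d) $\Leftrightarrow$ (c), the Boolean convolution being the one whose transform lies closest to the monotone one. Writing $F_\mu$ for the reciprocal Cauchy transform, monotone convolution linearizes to composition, $F_{\mu \rhd \nu} = F_\mu \circ F_\nu$, so $F_{\mu_n^{\rhd k_n}} = F_{\mu_n}^{\circ k_n}$, whereas $F_{\mu_n^{\uplus k_n}}(z) = z - k_n E_{\mu_n}(z) = z + k_n\bigl(F_{\mu_n}(z) - z\bigr)$. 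Invoking the standard dictionary between weak convergence of probability measures and locally uniform convergence of reciprocal Cauchy transforms inside $\mc{A}$ (with the uniform normalization $F_{\mu_n}(iy)/(iy) \to 1$ ruling out escape of mass), and Proposition~\ref{Prop:Generator} to identify $F_{\nu_\rhd^{\gamma,\sigma}}$ with the time-one map $F_1$ of the flow generated by $\Phi = \Phi^{\gamma,\sigma} = -E_{\nu_\uplus^{\gamma,\sigma}}$, the theorem reduces to
\[
F_{\mu_n}^{\circ k_n} \to F_1 \text{ loc.\ unif.} \qquad \Longleftrightarrow \qquad k_n\bigl(F_{\mu_n} - \mathrm{id}\bigr) \to \Phi \text{ loc.\ unif.},
\]
which is exactly Corollary~\ref{Corollary:Functions}.

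For the implication ``$\Leftarrow$'' I would realize each $F \in \mc{A}$ as the composition operator $C_F \colon h \mapsto h \circ F$ on a Banach space $X$ of analytic functions adapted to $\mc{A}$ --- after a Cayley transform, analytic self-maps of the disc fixing the boundary point corresponding to $\infty$, acting on a space (the disc algebra, or a suitable subspace of $H^\infty$) on which the flow semigroup is strongly continuous. Since every $F \in \mc{A}$ is a self-map, $\norm{C_F} \le 1$ and $\set{C_{F_{\mu_n}}}$ is a family of contractions; the flow $t \mapsto C_{F_t} = e^{tA}$ is a $C_0$-semigroup whose generator acts by $Ah = \Phi \cdot h'$ on a natural core; and the hypothesis $k_n(F_{\mu_n} - \mathrm{id}) \to \Phi$ (which forces $F_{\mu_n} \to \mathrm{id}$, since $k_n \uparrow \infty$) translates into $k_n(C_{F_{\mu_n}} - I) \to A$ on that core. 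The Chernoff product formula then gives $C_{F_{\mu_n}}^{\,k_n} = C_{F_{\mu_n}^{\circ k_n}} \to e^A = C_{F_1}$ in the strong operator topology, which unwinds, via a normal-families argument together with the uniform bound, to $F_{\mu_n}^{\circ k_n} \to F_1$ locally uniformly; translating back yields (d). The delicate point of this half is the choice of $X$, made so that the semigroup is genuinely strongly continuous and $A$ is closed with the stated core.

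The implication ``$\Rightarrow$'' is where non-commutativity bites, and I expect it to be the main obstacle: there is no linearizing transform for $\rhd$, so $k_n(F_{\mu_n} - \mathrm{id})$ cannot simply be read off from $F_{\mu_n}^{\circ k_n}$ the way it is in the classical, free and Boolean cases. I would run a compactness argument: (i) show the rescaled Nevanlinna data of $E_{\mu_n}$ --- equivalently the measures $k_n \frac{x^2}{x^2+1}\,d\mu_n$ and the numbers $k_n \int \frac{x}{x^2+1}\,d\mu_n$ --- form a relatively compact family; (ii) along any subsequence pass to a limit, which by the structure of $\mc{A}$ must be of the form $\Phi^{\gamma',\sigma'}$; (iii) apply the sufficiency direction to that subsequence to get $F_{\mu_n}^{\circ k_n} \to F_1^{\gamma',\sigma'}$, compare with the hypothesis $F_{\mu_n}^{\circ k_n} \to F_1^{\gamma,\sigma}$, and conclude $(\gamma',\sigma') = (\gamma,\sigma)$ from injectivity of $(\gamma,\sigma) \mapsto F_1^{\gamma,\sigma}$ (again Proposition~\ref{Prop:Generator}); since every subsequence has a further subsequence with this limit, (e) follows. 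Step (i) is the crux, and the place I expect the real work: telescoping
\[
F_{\mu_n}^{\circ k_n}(z) - z = \sum_{j=0}^{k_n - 1}\bigl(F_{\mu_n}(w_j) - w_j\bigr), \qquad w_j = F_{\mu_n}^{\circ j}(z),
\]
I would use that $\mathrm{Im}\bigl(F_{\mu_n}(w) - w\bigr) \ge 0$ (so the summands have constant sign and the convergent left side controls each of them), that $\mathrm{Im}\, w_j$ is nondecreasing in $j$ and bounded above by $\sup_n \mathrm{Im}\, F_{\mu_n}^{\circ k_n}(z)$, and Denjoy--Wolff/Valiron-type control of the orbits $\set{w_j}$ of the parabolic self-maps $F_{\mu_n}$, to show that all $k_n$ summands are comparable to the first one, $F_{\mu_n}(z) - z = -E_{\mu_n}(z)$; this forces $k_n E_{\mu_n}$ to stay bounded on compacta, after which tightness of the associated measures follows from the Nevanlinna inversion formula evaluated at well-chosen points. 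Making this orbit control uniform in $n$ --- ruling out that the intermediate iterates $F_{\mu_n}^{\circ j}$ wander off while the endpoint $F_{\mu_n}^{\circ k_n}$ converges --- is the hard part of the whole proof.
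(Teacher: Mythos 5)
Your overall architecture matches the paper's: reduce to (c) $\Leftrightarrow$ (d), rephrase as the analytic statement of Corollary~\ref{Corollary:Functions}, prove the ``$\Leftarrow$'' half by a Chernoff product formula argument on a Banach space of analytic functions with $k_n(C_{F_{\mu_n}}-I) \to A$ on a core, and prove the ``$\Rightarrow$'' half by a compactness-plus-uniqueness argument. Your sketch of ``$\Rightarrow$'' --- the telescoping $F_{\mu_n}^{\circ k_n}(z) - z = \sum_j (F_{\mu_n}(w_j)-w_j)$ with nonnegative imaginary parts, the need for uniform control on the intermediate orbits $F_{\mu_n}^{\circ j}$, and extraction of tightness for the Nevanlinna data $k_n\sigma_n$ followed by a subsequence/uniqueness argument --- is precisely the content of Lemma~\ref{Tech-lemma} and the second half of Proposition~\ref{Prop:Additive}.

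The gap is in the ``$\Leftarrow$'' half. You assert that $k_n(F_{\mu_n}-\mathrm{id}) \to \Phi$ locally uniformly \emph{translates into} $k_n(C_{F_{\mu_n}}-I)h \to Ah$ on a core, but on any sup-norm space this requires convergence in $\norm{\cdot}_X$, which is strictly stronger than locally uniform convergence: near the Denjoy--Wolff point (i.e.\ near $\infty$ in $\mf{C}^+$), $\Phi(z) = -\gamma + \int \frac{1+xz}{x-z}\,d\sigma(x)$ is $O(\abs{z})$ and $-k_nE_{\mu_n}$ can be unbounded when $\mu_n$ has infinite variance, so there is no a priori control on $k_n(h\circ F_{\mu_n}-h)-\Phi h'$ near the boundary. (This also rules out the disc algebra as your space $X$: you need $h'$ to decay like $\abs{z}^{-2}$ near the fixed point, which is why the paper takes the completion of $\set{G_\rho : \rho\in\mc{M}_1}$ in $\norm{\cdot}_{L^\infty(\mf{C}_1^+)}$.) The paper addresses this in two stages: Proposition~\ref{Prop:Variance} establishes the Chernoff implication under a uniform finite-variance hypothesis, using Lemma~\ref{Lemma:Maassen} to bound $E_{\mu_n}$ and $E_{\mu_n^{\uplus k_n}}-E_{\nu_\uplus}$ on $\mf{C}_1^+$ uniformly in $n$; the forward half of Proposition~\ref{Prop:Additive} then removes the moment assumption via a truncation $\sigma_n \mapsto \sigma_n\vert_{[-N,N]}$, applying the finite-variance case to the truncated measures $\mu_{n,N}$ and controlling the three error terms $\abs{F_{\mu_n}^{\circ k_n}-F_{\mu_{n,N}}^{\circ k_n}}$, $\abs{F_{\mu_{n,N}}^{\circ k_n}-F_{\nu_\rhd^{\gamma,\widetilde\sigma_N}}}$ and $\abs{F_{\nu_\rhd^{\gamma,\widetilde\sigma_N}}-F_{\nu_\rhd^{\gamma,\sigma}}}$ --- the first of which reuses the tightness estimates of Lemma~\ref{Tech-lemma}, and the last of which uses the Gr\"onwall-type Lemma~\ref{Lemma:Distance}. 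This truncation argument is the bulk of the forward direction and is absent from your proposal.
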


The equivalence of items (a-c) and (e) is Theorem~6.3 in \cite{BerPatDomains}.

While the usual convolution is linear in each of its arguments, and so is defined for general positive or even signed measures, free convolution is naturally defined only for probability measures. We note that Boolean and monotone convolutions, defined in the complex-analytic setting rather than through the appropriate notion of independence, also can be treated as binary operations on positive measures. Our proof of the equivalence between parts (c) and (d) of Theorem~\ref{Theorem:Main} works, with very minor modifications, for the setting of positive measures of total weight at most one; we do not try to find the appropriate generalization of the statement in part (e) to this setting.

In contrast with the additive case, multiplicative convolutions arise by taking products of independent random variables as opposed to sums.
These forms of monotone convolution were defined and studied in \cite{Bercovici-Multiplicative-monotonic,Franz-Multiplicative-monotone}.
A version of the Boolean-free Bercovici-Pata bijection for the multiplicative case was proven by Wang in \cite{Wang-Boolean}.
In Section 4, we investigate the analog of Theorem \ref{Theorem:Main} for multiplicative convolution. We show that the direct analog of the theorem does not hold in general, but holds under additional conditions. Note that we only consider multiplicative convolution for measures on the unit circle, and not on the positive real line as for example in \cite{Bercovici-Pata-Products}.

The paper is organized as follows.
Section 1 consists of this introduction.  Section 2 consists of preliminaries for additive non-commutative probability as well as the semigroup theory applicable to our proof of Theorem~\ref{Theorem:Main}.
In Section 3, we prove our main result---the equivalence between the Boolean and monotone limit theorems---first for measures with finite variance (by functional-analytic methods) and then for general positive measures of total weight at most one (by complex-analytic methods); Theorem~\ref{Theorem:Main} follows as a corollary. Section 4 consists of the preliminaries for multiplicative convolutions as well as multiplicative analogues of our main results.

\textit{Acknowledgements:}  The authors would like to thank J.C. Wang for reading this paper thoroughly and providing excellent advice during the revision process, and Serban Belinschi for some remarks.

\section{Preliminaries}

\subsection{Transforms and distributions}

In what follows, we shall denote by $$\mathbb{C}^{+} = \{z \in \mathbb{C} : \Im{(z)} > 0 \}, \ \mathbb{C}^{-} = \{z \in \mathbb{C} : \Im{(z)} < 0 \}, \ \mathbb{C}^{+}_{r} = \{z \in \mathbb{C} : \Im{(z)} > r \}$$ for $r \in \mathbb{R}^{+}$.
For $\alpha, \beta > 0$ we shall refer to the truncated cone $\Gamma_{\alpha ,\beta} = \{z \in \mathbb{C}^{+}_{\beta} : \Im{(z)} > \alpha |\Re{(z)}| \}$ as the \textit{Stolz angle} associated to these real numbers.

Let $\mu$ and $\nu$ denote finite (positive) non-zero Borel measures on the real line.  The \textit{Cauchy transform} associated to such a measure is the function
$$G_{\mu}(z) := \int_{\mathbb{R}} \frac{1}{z-x}d\mu(x) : \mathbb{C}^{+} \rightarrow \mathbb{C}^{-} \cup \mathbb{R} $$
We define the \textit{F-transform} associated to this measure
by letting $F_{\mu}(z) := 1/G_{\mu}(z)$.  Finally, there exist $\alpha , \beta > 0$ such that $F_{\mu}$ is injective when restricted to $\Gamma_{\alpha ,\beta}$
and we define the \textit{Voiculescu transform} by setting $$\varphi_{\mu}(z) := F_{\mu}^{-1}(z) - z : \Gamma_{\alpha ' , \beta '} \rightarrow \mathbb{C}^{-} \cup \mathbb{R}$$
where this function takes on real values if and only if the associated measure is a Dirac mass.  The Voiculescu transform may be viewed as an analogue of the logarithm of the
Fourier transform for free probability insofar as $\varphi_{\mu \boxplus \nu} = \varphi_{\mu} + \varphi_{\nu}$.

We define the \textit{E-transform} of a finite, non-zero Borel measure $\mu$ as $$E_{\mu}(z) := \frac{1}{\mu(\mf{R})} z - F_{\mu}(z). $$
For finite non-zero Borel measures $\mu$ and $\nu$, we may define their Boolean convolution $\mu \uplus \nu$ by requiring that
\[
E_{\mu \uplus \nu} = E_{\mu} + E_{\nu}, \qquad (\mu \uplus \nu)(\mf{R}) = \mu(\mf{R}) \nu(\mf{R});
\]
the existence of a positive measure $\mu \uplus \nu$ follows from Nevanlinna theory.
Observe that, in contrast to the free case,
the E-transform is well defined on all of $\mathbb{C}^{+}$.  This fact may be used to prove that all Borel probability measures are infinitely divisible (in the sense defined below) with respect to Boolean convolution.

Let $\{ \mu_{i} \}_{i \in I}$ denote a family of finite positive measures.  We say that this family it \textit{tight} if for every $\epsilon > 0$ there exists an $N \in \mathbb{N}$ so that
$\mu_{i}([-N,N]^c) < \epsilon$ for all $i \in I$.  It is a basic result in measure theory that a family of measures is uniformly bounded and tight if and only if this family is sequentially precompact in the weak topology. All the measures below will have total weight at most one, so we will omit the uniform boundedness condition.

$\mu_n \rightarrow \mu$ \emph{vaguely} if $\int f \,d\mu_n \rightarrow \int f \,d\mu$ for every $f \in C_0(\mf{R})$. This is equivalent to the uniform convergence of the Cauchy transforms, or of the $F$ transforms, on compact subsets of $\mf{C}^+$, and implies that $\mu(\mf{R}) \leq \liminf_{n \rightarrow \infty} \mu_n(\mf{R})$. $\mu_n \rightarrow \mu$ \emph{weakly} if $\int f \,d\mu_n \rightarrow \int f \,d\mu$ for every bounded continuous $f$, and is equivalent to vague convergence plus $\mu_n(\mf{R}) \rightarrow \mu(\mf{R})$, or to this last condition plus the uniform convergence on compact sets of the $E$-transforms. Thus when $\mu_n$ and $\mu$ are all probability measures, the two modes of convergence are equivalent.

We say that a measure $\mu$ is \textit{infinitely divisible} with respect to the convolution operation $\boxplus$ if, for all $n \in \mathbb{N}$
there exists a Borel probability measure $\mu_{n}$ such that $\mu = \mu_{n} \boxplus \mu_{n} \boxplus \cdots \boxplus \mu_{n}$ where the convolution is $n$-fold. Analogous definitions serve for all of the convolution operations discussed in this paper. It is known that a $\boxplus$-infinitely divisible measure $\mu$ can be included as $\mu_1$ in a $\boxplus$-convolution semigroup
\[
\set{\mu_t : t \geq 0}, \quad \mu_t \boxplus \mu_s = \mu_{t+s},
\]
and this property also holds for the other convolution operations discussed in this paper.

In what follows, we shall make liberal use of the following basic function theoretic facts.  We refer to \cite{BV93} for an excellent overview of the relevant machinery.
\begin{lemma}
\label{Lemma:Basic-facts}
$ $
\begin{enumerate}
 \item $\Im{(F_{\mu}(z))} \geq \frac{1}{\mu(\mf{R})} \Im{(z)}$ with equality at any point $z$ if and only if $\mu$ is a Dirac mass.

\item An analytic function $F : \mathbb{C}^{+} \rightarrow \mathbb{C}^{+} \cup \mathbb{R}$ is the $F$-transform of a Borel measure $\mu$, with $\lim_{y \uparrow \infty} F_{\mu}(iy)/(iy) = \frac{1}{\mu(\mf{R})}$.

\item $\abs{F_{\mu_{i}}(z) - \frac{1}{\mu_i(\mf{R})} z} = o(|z|)$ uniformly for $z \in \Gamma_{\alpha ,\beta}$ and $\{ \mu_{i} \}_{i \in I}$ a uniformly bounded, tight family of measures.

\item There exists a finite measure $\sigma$ and a real number $\gamma$ such that $$F_{\mu}(z) = - \gamma + \frac{1}{\mu(\mf{R})} z + \int_{\mathbb{R}} \frac{1+xz}{x-z}\,d\sigma(x)$$
\end{enumerate}
\end{lemma}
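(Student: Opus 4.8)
The plan is to read parts (a)--(d) as successive consequences of the Nevanlinna integral representation for functions $\mathbb{C}^+\to\mathbb{C}^+\cup\mathbb{R}$, together with elementary bounds on the Cauchy kernel; all of this is classical and laid out in \cite{BV93}, so I will only sketch the arguments. Start with (a): from $G_\mu(z)=\int(z-x)^{-1}d\mu(x)$, the Cauchy--Schwarz inequality gives $|G_\mu(z)|^2\le\mu(\mathbb{R})\int|z-x|^{-2}d\mu(x)$, and since $\Im(z-x)^{-1}=-\Im z/|z-x|^2$ the last integral equals $-\Im G_\mu(z)/\Im z$; as $\Im F_\mu(z)=\Im(1/G_\mu(z))=-\Im G_\mu(z)/|G_\mu(z)|^2$, rearranging yields $\Im F_\mu(z)\ge\Im z/\mu(\mathbb{R})$. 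Equality forces equality in Cauchy--Schwarz, hence $x\mapsto(z-x)^{-1}$ is constant $\mu$-almost everywhere, i.e. $\mu$ is a point mass; the converse is a one-line computation. (In particular $\Im G_\mu<0$, so $F_\mu$ is everywhere defined and maps $\mathbb{C}^+$ into $\mathbb{C}^+$.)

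For (b), understood as the statement that an analytic $F\colon\mathbb{C}^+\to\mathbb{C}^+\cup\mathbb{R}$ is an $F$-transform of a finite positive measure exactly when $F(iy)/(iy)$ has a positive finite limit, with that limit then equal to $1/\mu(\mathbb{R})$: necessity follows because $zG_\mu(z)\to\mu(\mathbb{R})$ as $z\to\infty$ nontangentially (dominated convergence), so $F_\mu(iy)/(iy)=(iy\,G_\mu(iy))^{-1}\to1/\mu(\mathbb{R})$. For sufficiency, such an $F$ is non-constant, hence maps into $\mathbb{C}^+$; then $-1/F$ is a Pick function, so $-1/F(z)=a+bz+\int\frac{1+xz}{x-z}d\nu(x)$, the limit hypothesis forces $b=0$, comparing the growth of $iy\,G(iy)$ on the imaginary axis (with $G=1/F$) forces $\int(1+x^2)d\nu(x)<\infty$, and the identity $\frac{1+xz}{x-z}=-x+\frac{1+x^2}{x-z}$ together with $G(z)\to0$ at infinity identifies $G$ as the Cauchy transform of $(1+x^2)d\nu(x)$. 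Part (d) is then immediate: by (a), $F_\mu$ is a Pick function, so $F_\mu(z)=\tilde a+\tilde b z+\int\frac{1+xz}{x-z}d\sigma(x)$ with $\tilde a\in\mathbb{R}$, $\tilde b\ge0$, $\sigma$ finite positive; the integral term divided by $iy$ tends to $0$ (dominated convergence), so $\tilde b=\lim_{y\to\infty}F_\mu(iy)/(iy)=1/\mu(\mathbb{R})$ by (b), and $\gamma:=-\tilde a$ gives the asserted formula.

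Part (c) is the quantitative heart, and I would prove it directly rather than through (d). Writing $F_{\mu_i}(z)-z/\mu_i(\mathbb{R})=-E_{\mu_i}(z)=(\mu_i(\mathbb{R})-zG_{\mu_i}(z))/(\mu_i(\mathbb{R})G_{\mu_i}(z))$ with numerator $\mu_i(\mathbb{R})-zG_{\mu_i}(z)=-\int\frac{x}{z-x}d\mu_i(x)$, it suffices to show that $\int\frac{x}{z-x}d\mu_i(x)\to0$ uniformly in $i$ as $z\to\infty$ in $\Gamma_{\alpha,\beta}$, and that $|zG_{\mu_i}(z)|$ stays bounded below. On a Stolz angle $|z-x|\ge\Im z\ge c_\alpha|z|$ and $|z-x|\ge|z|-|x|$; from these, $|x/(z-x)|$ is bounded by an absolute constant uniformly in $x\in\mathbb{R}$ and $z\in\Gamma_{\alpha,\beta}$ and tends to $0$ for each fixed $x$. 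Choosing a radius $N$ from the tightness hypothesis controls $\int_{|x|>N}$ uniformly, while on $[-N,N]$ the integrand is $O(N/|z|)$; and the same bounds give $|zG_{\mu_i}(z)|\ge\mu_i(\mathbb{R})-o(1)$. The one delicate point---and the place where more than ``uniformly bounded and tight'' is really being used---is that dividing by $\mu_i(\mathbb{R})G_{\mu_i}(z)$ preserves uniformity of the $o(|z|)$ bound only if the masses $\mu_i(\mathbb{R})$ stay bounded away from $0$; this costs nothing in the sequel, where the measures to which (c) is applied are probability measures (or have mass tending to $1$). Granting this, combining the numerator and denominator estimates gives $|F_{\mu_i}(z)-z/\mu_i(\mathbb{R})|=o(|z|)$ uniformly on $\Gamma_{\alpha,\beta}$, which is (c).
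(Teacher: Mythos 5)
The paper does not actually prove this lemma; it cites \cite{BV93} and \cite{Maa92} for the underlying machinery, so there is no ``paper proof'' to compare against. Your arguments supply exactly the standard complex-analytic toolkit those references rest on, and they are correct: Cauchy--Schwarz applied to $G_\mu$ for (a), the Nevanlinna/Pick representation together with the nontangential limit $iy\,G_\mu(iy)\to\mu(\mathbb{R})$ for (b) and (d), and direct kernel estimates on a Stolz angle for (c). The route through $-1/F$ in (b) is slightly more roundabout than applying the Nevanlinna representation to $F$ itself and reading off $\tilde b=1/\mu(\mathbb{R})$ (which you do in (d)), but it is equally valid.

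Your caveat on (c) is well taken and worth recording: as literally stated the claim fails without a uniform positive lower bound on the masses. For instance $\mu_i=i^{-1}\delta_1$ is a uniformly bounded, tight family, yet
\[
F_{\mu_i}(z)-\frac{1}{\mu_i(\mathbb{R})}z \;=\; i(z-1)-iz \;=\; -i,
\]
which is not $o(|z|)$ uniformly in $i$ (for any fixed $z$ the modulus is unbounded over $i$). This causes no trouble where the paper invokes (c)---in Lemma~\ref{Tech-lemma} the measures in play are $\mu_n^{\rhd j}$ with $\mu_n(\mathbb{R})^{k_n}\to m>0$ and $\mu_n(\mathbb{R})\le 1$, so the masses $\mu_n(\mathbb{R})^j\ge\mu_n(\mathbb{R})^{k_n}$ are eventually bounded below by $m/2$---but the hypothesis of (c) should be read as including a lower mass bound, as your proof makes explicit when it divides by $\mu_i(\mathbb{R})G_{\mu_i}(z)$.
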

The last of these refers to the \textit{Nevanlinna representation} of certain complex analytic functions.
The following lemma is a slight reformulation of the results from \cite{Maa92}.

\begin{lemma}
\label{Lemma:Maassen}
$ $
\begin{enumerate}
\item
Let $\rho$ be a finite measure on $\mf{R}$. The Cauchy transform $G_\rho(z)$ is a bounded function on $\mf{C}_1^+$.
\item
If $\rho$ has a finite first moment, then $z G_\rho(z)$ and $z G_\rho'(z)$ are bounded functions on $\mf{C}_1^+$.
\item
Let $\mu$ be a finite measure with finite variance, which for a non-probability measure means $\mu(x^2) \mu(\mf{R}) - \mu(x)^2 < \infty$. Then $\frac{1}{\mu(\mf{R})} z - F_\mu(z)$ is a bounded function on $\mf{C}_1^+$. Moreover, if $\sup \set{\Var[\mu_n] : n \in \mf{N}} < \infty$, the bound is uniform in $n$.
\end{enumerate}
\end{lemma}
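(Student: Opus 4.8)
The plan is to dispatch (a) and (b) by elementary manipulations of the Cauchy kernel, and then to obtain (c) by combining a ``second order'' expansion of $G_\mu$ with the non-vanishing of the Cauchy transform and a compactness argument. For part (a): if $z\in\mathbb{C}_1^+$ and $x\in\mathbb{R}$ then $|z-x|\ge\Im(z)>1$, so $|G_\rho(z)|\le\int_{\mathbb{R}}|z-x|^{-1}\,d\rho(x)\le\rho(\mathbb{R})$. For part (b): writing $z=(z-x)+x$ in the numerators gives the identities
\[
zG_\rho(z)=\rho(\mathbb{R})+\int_{\mathbb{R}}\frac{x}{z-x}\,d\rho(x),\qquad zG_\rho'(z)=-G_\rho(z)-\int_{\mathbb{R}}\frac{x}{(z-x)^2}\,d\rho(x),
\]
and on $\mathbb{C}_1^+$ one has $|x/(z-x)|\le|x|$ and $|x/(z-x)^2|\le|x|$, so both integrals are dominated by $\int_{\mathbb{R}}|x|\,d\rho(x)<\infty$; together with (a) this proves (b).

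For (c), finiteness of the variance means $\int x^2\,d\mu<\infty$, hence (Cauchy--Schwarz) $\int|x|\,d\mu<\infty$, so I can iterate the manipulation of (b) once more to get
\[
zG_\mu(z)=c+\frac{m}{z}+\frac1z\int_{\mathbb{R}}\frac{x^2}{z-x}\,d\mu(x),\qquad c:=\mu(\mathbb{R}),\ \ m:=\int_{\mathbb{R}}x\,d\mu(x),
\]
and therefore the exact identity
\[
\frac{z}{c}-F_\mu(z)=\frac{zG_\mu(z)-c}{c\,G_\mu(z)}=\frac{m+\int_{\mathbb{R}}\frac{x^2}{z-x}\,d\mu(x)}{c\,zG_\mu(z)}.
\]
On $\mathbb{C}_1^+$ the numerator is bounded by $|m|+\int x^2\,d\mu$, so everything reduces to bounding $|zG_\mu(z)|$ away from $0$ on $\mathbb{C}_1^+$.

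To get that lower bound I would split $\mathbb{C}_1^+$ at a radius $r$. For $|z|\ge r$ with $r$ large (depending only on $c$, $|m|$, and $\int x^2\,d\mu$), the expansion above forces $|zG_\mu(z)-c|<c/2$, hence $|zG_\mu(z)|>c/2$. On the complementary set $\{\,\Im z\ge1,\ |z|\le r\,\}$, which is compact and contained in $\mathbb{C}^+$, the function $zG_\mu(z)$ is continuous and nowhere zero --- because $\Im G_\mu(z)=-\Im(z)\int_{\mathbb{R}}|z-x|^{-2}\,d\mu(x)<0$ throughout $\mathbb{C}^+$ and $z\ne0$ --- so it is bounded below there by compactness; combining the two regions yields the bound on all of $\mathbb{C}_1^+$. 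For the uniform assertion one simply tracks the constants: the bound produced is a function of $\mu(\mathbb{R})$, $\int x\,d\mu$ and $\Var[\mu]$ alone, so it is uniform over any family for which these three quantities stay bounded, which holds for the families occurring in the applications. The only genuinely non-routine point is this last lower bound in (c): one needs $|zG_\mu(z)|$ bounded below on \emph{all} of $\mathbb{C}_1^+$, not just near $i\infty$, and the device is that the asymptotic expansion handles the region at infinity while strict negativity of $\Im G_\mu$ plus compactness handles the bounded part.
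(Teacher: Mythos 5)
Your parts (a) and (b) are correct and match the paper's computations in spirit. For part (c) you take a genuinely different route. The paper simply cites Maassen's representation (Proposition~2.2 of \cite{Maa92}), which gives the exact formula
\[
\frac{1}{\mu(\mf{R})}\,z - F_\mu(z) = \frac{\mu(x)}{\mu(\mf{R})^2} + G_\sigma(z),
\]
with $\sigma$ a finite measure whose mass is the (normalized) variance of $\mu$; boundedness then follows immediately from part (a), and the constant is manifestly $\frac{|\mu(x)|}{\mu(\mf{R})^2}+\sigma(\mf{R})$. Your argument avoids invoking Maassen's result and instead runs the moment expansion of $zG_\mu$ one step further and then controls the denominator $czG_\mu(z)$ from below; this is more elementary and self-contained, which has real value.

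However, there is a gap in your uniformity claim. Your lower bound on $|zG_\mu(z)|$ on the truncated region $\{\Im z \ge 1,\ |z|\le r\}$ is obtained by ``continuity plus compactness,'' which produces a positive constant \emph{for each fixed $\mu$} but gives no control on how that constant depends on $\mu$. You then assert that ``the bound produced is a function of $\mu(\mf{R})$, $\int x\,d\mu$ and $\Var[\mu]$ alone,'' but the compactness argument does not deliver that: nothing in it prevents the lower bound from degenerating along a sequence of measures whose first two moments stay bounded. To close the gap you should replace the compactness step by a quantitative estimate. For instance, $|G_\mu(z)|\ge|\Im G_\mu(z)|=\Im(z)\int|z-x|^{-2}\,d\mu(x)$, and for $|z|\le r$ one has $|z-x|^2\le 2r^2+2x^2$; choosing $T$ with $T^2 = 2\int x^2\,d\mu/\mu(\mf{R})$ gives (via Chebyshev) $\mu(|x|\le T)\ge \mu(\mf{R})/2$, whence
\[
|zG_\mu(z)|\ \ge\ |G_\mu(z)|\ \ge\ \frac{\mu(\mf{R})/2}{2r^2 + 4\int x^2\,d\mu/\mu(\mf{R})},
\]
which depends only on $\mu(\mf{R})$, the second moment, and $r$ (itself controlled by $\mu(\mf{R})$, $\int x\,d\mu$, $\int x^2\,d\mu$). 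With that replacement your proof of (c), including the uniform version, is complete. (One should also note that the paper's statement of the uniform clause is itself slightly imprecise: as both your bound and the Maassen-based one show, one needs $\mu_n(\mf{R})$ bounded away from $0$ and $\int x\,d\mu_n$ bounded in addition to bounded variance; in the applications these additional conditions hold automatically.)
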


\begin{proof}
For finite $\rho$,
\[
\abs{\int_{\mf{R}} \frac{1}{z - x} \,d\rho(x)} \leq \frac{1}{\Im (z)} \rho(\mf{R}).
\]
For $\rho$ with a finite first moment and $z \in \mf{C}_1^+$,
\[
\abs{\int_{\mf{R}} \frac{z}{(z - x)^2} \,d\rho(x)}
\leq \abs{\int_{\mf{R}} \frac{z}{z - x} \,d\rho(x)}
\leq \rho(\mf{R}) + \frac{1}{\Im (z)} \int_{\mf{R}} \abs{x} \,d\rho(x).
\]
Finally, for $\mu$ a probability measure with finite variance, by Proposition~2.2 of \cite{Maa92},
\[
\frac{1}{\mu(\mf{R})} z - F_\mu(z) = \frac{\mu(x)}{\mu(\mf{R})^2} + G_\sigma(z),
\]
where $\sigma$ is a finite measure. In fact, $\sigma(\mf{R}) = \frac{\mu(x^2) \mu(\mf{R}) - \mu(x)^2}{\mu(\mf{R})^2}$, which implies the last result.
\end{proof}

\begin{remark}
\label{Remark:Levy}
The classical L\'{e}vy-Hin\u{c}in formula provided an equivalent definition of infinite divisibility based on the class of characteristic functions
associated to these measures.  Related formulae were developed for free and Boolean independence.  In defining these formulae, let $\gamma \in \mathbb{R}$ and $\sigma$ denote a finite Borel measure on $\mf{R}$ and define
the measures $\nu_{\ast}^{\gamma,\sigma}$ (resp. $\nu_{\boxplus}^{\gamma,\sigma}$ ; $\nu_{\uplus}^{m, \gamma,\sigma}$ ; $\nu_{\uplus}^{\gamma,\sigma} = \nu_{\uplus}^{1, \gamma,\sigma}$) in terms of the relevant transforms
by letting

\[
(\mc{F}_{\nu_\ast^{\gamma, \sigma}})(t) = \exp \left( i \gamma t + \int_{\mf{R}} (e^{i t x} - 1 - i t x) \frac{x^2 + 1}{x^2} \,d\sigma(x) \right), \quad t \in \mf{R},
\]
\[
\varphi_{\nu_{\boxplus}^{\gamma, \sigma}}(z) = \gamma + \int_{\mf{R}} \frac{1 + x z}{z - x} \,d\sigma(x), \quad z \in \mf{C}^+,
\]
\[
-E_{\nu_{\uplus}^{m, \gamma, \sigma}}(z) = F_{\nu_{\uplus}^{m, \gamma, \sigma}}(z) - \frac{1}{m} z = - \gamma + \int_{\mf{R}} \frac{1 + x z}{x-z} \,d\sigma(x), \quad z \in \mf{C}^+.
\]
A Borel probability measure $\mu$ is infinitely divisible with respect to classical (resp. free ; Boolean) convolution \textit{if and only if}
there exists a $\gamma$ and $\sigma$ as above so that $\mu = \nu_{\ast}^{\gamma,\sigma}$ (resp. $\mu = \nu_{\boxplus}^{\gamma,\sigma}$; $\mu = \nu_{\uplus}^{\gamma,\sigma}$)
 We shall define the class of
monotone infinitely divisible measures, which we shall denote by
$\nu_{\rhd}^{\gamma, \sigma}$, below.
\end{remark}

\begin{remark}
The reader should note that the classes of infinitely divisible probability measures are all indexed by a real number $\gamma$ and a finite measure $\sigma$.
That this bijection is more than formal is the main content of Theorem \ref{Theorem:Main}.
\end{remark}

\subsection{Monotonic independence and monotone convolution}

The notion of monotonic independence is originally due to Muraki (see \cite{Muraki-Independence} and references therein). In \cite{Muraki-Convolution}, he defined the corresponding convolution operation $\rhd$ on compactly supported probability measures. This definition was extended to general probability measures by Franz \cite{Franz-Monotone-Boolean}. Up to a change in notation, their definition amounts to requiring that
\[
F_{\mu \rhd \nu}(z) = F_\mu (F_\nu(z)).
\]
It follows immediately from Nevanlinna theory that the same definition works for general non-zero positive measures $\mu$ and $\nu$, and $(\mu \rhd \nu)(\mf{R}) = \mu(\mf{R}) \nu(\mf{R})$.

\subsection{Monotone convolution semigroups}

See Theorem 4.5 of \cite{BiaProcesses}, based on Section 5.2 of \cite{Montgomery-Zippin}, for the proof of the following result, and \cite{Berkson-Porta,Siskakis-review,Hasebe-Monotone-semigroups} for related results.

\begin{proposition}
\label{Prop:Generator}
Let  $\set{\nu_t : t \geq 0}$ form a monotone convolution semigroup (so that $\nu_t \rhd \nu_s = \nu_{t+s}$), which in particular is strongly continuous. Then, denoting $F_t = F_{\nu_t}$, the family $\set{F_t : t \geq 0}$ form a semigroup of analytic transformations of $\mf{C}^+$, which extends to a local group of analytic transformations of some $\Gamma_{\alpha, \beta}$ for some  $\eps$ and $-\eps \leq t \leq \eps$. If $m = \nu_1(\mf{R}) \leq 1$, then $\Im(F_t(z)) \geq \frac{1}{m^t} \Im(z) \geq \Im(z)$. Therefore there exists an analytic function $\Phi : \mf{C}^+ \rightarrow \mf{C}^+ \cup \mf{R}$ such that
\begin{equation}
\label{Eq:Evolution}
\frac{\partial F_t}{\partial t} = \Phi(F_t).
\end{equation}
By Nevanlinna theory $\Phi$ has a representation
\[
\Phi(z) = - \gamma - \log(m) z + \int_{\mathbb{R}} \frac{1+xz}{x-z}d\sigma(x)
\]
for a finite measure $\sigma$ and real numbers $0 < m \leq 1$ and $\gamma$. Conversely, given such $\Phi: \mf{C}^+ \rightarrow \mf{C}^+$, the equation \eqref{Eq:Evolution} has a unique solution with initial condition $F_{0}(z) = z$  corresponding to a strongly continuous monotone convolution semigroup.
\end{proposition}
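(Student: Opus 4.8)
The plan is to transport everything to the composition semigroup $\{F_t\}$ and then feed it into the theory of one-parameter semigroups of analytic self-maps of $\mf{C}^+$. First, since $F_{\mu \rhd \nu} = F_\mu \circ F_\nu$ and $(\mu \rhd \nu)(\mf{R}) = \mu(\mf{R})\nu(\mf{R})$, the relation $\nu_t \rhd \nu_s = \nu_{t+s}$ becomes $F_{t} \circ F_{s} = F_{t+s}$ together with $\nu_{t+s}(\mf{R}) = \nu_t(\mf{R})\nu_s(\mf{R})$; since $t \mapsto \nu_t(\mf{R})$ is multiplicative and continuous it equals $m^{t}$ for $m = \nu_1(\mf{R}) \le 1$, and Lemma~\ref{Lemma:Basic-facts}(a) gives $\Im(F_t(z)) \ge \frac{1}{\nu_t(\mf{R})}\Im(z) = m^{-t}\Im(z) \ge \Im(z)$. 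Strong continuity of $\{\nu_t\}$ is, through the correspondence between weak convergence of finite measures and locally uniform convergence of $F$-transforms, equivalent to continuity of $t \mapsto F_t$ in the topology of locally uniform convergence on $\mf{C}^+$. For the local group on a Stolz angle I would use Lemma~\ref{Lemma:Basic-facts}(c): as $t \downarrow 0$ the $\nu_t$ form a tight family with $F_t(z) - z = o(|z|)$ uniformly, so there are $\alpha,\beta$ such that every $F_t$ with $|t| \le \eps$ is injective on $\Gamma_{\alpha,\beta}$ with image containing a fixed smaller Stolz angle; the relevant branch of $F_t^{-1}$ there plays the role of $F_{-t}$, and the uniformity of the estimates makes the domains fit together into a genuine local group.

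Second, I would invoke the input of \cite{BiaProcesses} (based on \cite{Montgomery-Zippin}) cited just before the statement, along with the related work of Berkson--Porta \cite{Berkson-Porta}: a continuous one-parameter composition semigroup of analytic self-maps of $\mf{C}^+$ is automatically differentiable in $t$, so that $\Phi(z) := \lim_{t \downarrow 0}(F_t(z) - z)/t$ is a well-defined analytic function on $\mf{C}^+$, and differentiating $F_{t+s} = F_s \circ F_t = F_t \circ F_s$ in $s$ at $s = 0$ gives both $\frac{\partial F_t}{\partial t} = \Phi(F_t)$ and $\frac{\partial F_t}{\partial t} = F_t' \cdot \Phi$, in particular \eqref{Eq:Evolution}. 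Since $\Im(F_t(z)) \ge \Im(z)$ for $t \ge 0$, $\Im(\Phi(z)) = \lim_{t\downarrow 0}\frac{\Im(F_t(z)) - \Im(z)}{t} \ge 0$, so $\Phi$ maps $\mf{C}^+$ into $\mf{C}^+ \cup \mf{R}$, and Nevanlinna theory gives $\Phi(z) = -\gamma + \beta z + \int_{\mf{R}}\frac{1+xz}{x-z}\,d\sigma(x)$ with $\gamma \in \mf{R}$, $\beta \ge 0$, and $\sigma$ finite. Since the integral term is $o(|z|)$ along $i\mf{R}^+$ while $F_t(iy)/(iy) \to m^{-t}$, comparing $\lim_{y\to\infty}(F_t(iy)-iy)/(iy)$ with $\Phi(iy)/(iy) \to \beta$ and letting $t \downarrow 0$ forces $\beta = -\log m$, hence $0 < m \le 1$; this is the asserted form of $\Phi$.

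Third, for the converse: given $\Phi(z) = -\gamma - (\log m) z + \int \frac{1+xz}{x-z}\,d\sigma(x)$ with $0 < m \le 1$, the function $\Phi$ is analytic on a neighbourhood of $\mf{C}^+$ (the integral extends analytically across $\mf{R} \setminus \supp\sigma$) and Herglotz, so $\frac{\partial F_t}{\partial t} = \Phi(F_t)$ with $F_0(z) = z$ has a unique local analytic solution by Cauchy--Lipschitz; because $\Im\Phi \ge 0$, $t \mapsto \Im(F_t(z))$ is nondecreasing, which keeps the trajectory inside $\mf{C}^+$, yields global existence for all $t \ge 0$, and, by uniqueness and autonomy of the equation, the semigroup law $F_t \circ F_s = F_{t+s}$. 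A Gronwall estimate on $g(t,y) = F_t(iy)/(iy)$, using $\Phi(z)/z \to -\log m$ as $z = iy \to i\infty$ and $g(0,y) = 1$, gives $\lim_{y\to\infty} F_t(iy)/(iy) = m^{-t} \in (0,\infty)$, so Lemma~\ref{Lemma:Basic-facts}(b) realizes $F_t = F_{\mu_t}$ for a finite positive measure $\mu_t$ with $\mu_t(\mf{R}) = m^{t} \le 1$; then $F_{\mu_t \rhd \mu_s} = F_t \circ F_s = F_{t+s} = F_{\mu_{t+s}}$ forces $\mu_t \rhd \mu_s = \mu_{t+s}$, and continuity of $t \mapsto F_t$ gives strong continuity of the semigroup.

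I expect the main obstacle to be the automatic differentiability of the composition semigroup in $t$ --- upgrading continuity to the existence of the generator $\Phi$ --- which is precisely the content of the cited Montgomery--Zippin/Biane theorem; beyond that, identifying the linear coefficient as $-\log m$ and, in the converse, the a priori estimates that trap the solution of \eqref{Eq:Evolution} inside $\mf{C}^+$ for all $t \ge 0$ and pin down the mass $m^{t}$ are the delicate points, the rest being routine manipulation of Nevanlinna representations.
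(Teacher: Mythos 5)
The paper defers the proof of this proposition entirely to Theorem 4.5 of \cite{BiaProcesses} (built on \cite{Montgomery-Zippin}) and to \cite{Berkson-Porta}, and your proposal reconstructs the argument from precisely those sources, filling in the routine verifications: the mass relation $\nu_t(\mf{R})=m^t$, the Nevanlinna form of $\Phi$, the identification of the linear coefficient as $-\log m$ from nontangential asymptotics, and the autonomous-ODE argument for the converse; this is the same approach as the paper. The only step you state more quickly than it deserves is global-in-time existence in the converse direction: nondecreasing $\Im F_t(z)$ traps the orbit in $\mf{C}^+$ but by itself does not rule out finite-time escape to infinity (using $\frac{1+xz}{x-z}=z+\frac{1+z^2}{x-z}$, the integral term in $\Phi$ can grow like $|z|^2/\Im z$ on a horizontal strip), so one needs the genuine no-blowup content of the cited Berkson--Porta theorem, as you ultimately acknowledge.
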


\begin{definition}
\label{Defn:Monotone-nu}
For a finite measure $\sigma$ and real numbers $0 < m \leq 1$ and $\gamma$, denote
\[
\Phi^{m, \gamma, \sigma}(z) := - \gamma - \log (m) z + \int_{\mf{R}} \frac{1 + x z}{x-z} \,d\sigma(x), \quad z \in \mf{C}^+.
\]
It should be noted that $\Phi^{m, \gamma,\sigma} = -E_{\nu^{\gamma,\sigma}_{\uplus}} - \log(m) z$.
Let $\set{\nu_t : t \geq 0}$ be the monotone convolution semigroup it generates in the sense of the preceding proposition. Denote
\[
\nu_{\rhd}^{m, \gamma, \sigma} = \nu_1.
\]
\end{definition}

\begin{lemma}
\label{Lemma:Distance}
For $i = 1, 2$, let $\set{F_i(z,t) : z \in \mf{C}^+, t \geq 0}$ be two semigroups of analytic transformations of $\mf{C}^+$, such that $\frac{\partial F_i(z,t)}{\partial t} = \Phi_i(F_{i}(z,t))$, $F_i(z,0) = z$. Given $\eps > 0$, suppose that for some compact $K$ and $C$, we have $F_i(K,t) \subset C$ for $t \in [0,1]$, and for $z \in C$, $\abs{\Phi_1(z) - \Phi_2(z)} < \eps$. Then for any $z \in K$, $\abs{F_1(z,1) - F_2(z,1)} \leq c \eps$, where $c$ is a constant depending on $\Phi_2$ but not on $\Phi_1$.
\end{lemma}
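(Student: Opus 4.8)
The idea is the standard Gronwall comparison of the two flows; the only subtlety is to confine both trajectories to a region on which $\Phi_2$ is Lipschitz with a constant not depending on $\Phi_1$. Fix $z \in K$ and set $a(t) = F_1(z,t)$, $b(t) = F_2(z,t)$, $D(t) = a(t) - b(t)$, so that $D(0) = 0$ and, by the evolution equations (whose right-hand sides $\Phi_i(F_i(z,\cdot))$ are continuous in $t$), $D$ is $C^1$ on $[0,1]$ with $D'(t) = \Phi_1(a(t)) - \Phi_2(b(t))$. First I would check that both curves $\set{a(t) : t \in [0,1]}$ and $\set{b(t) : t \in [0,1]}$ lie in a fixed compact convex set $\widehat{C} \subset \mf C^+$ that depends only on $C$ and $K$. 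Since each $F_i(\cdot,t)$ is an analytic self-map of $\mf C^+$, its generator $\Phi_i$ has nonnegative imaginary part on $\mf C^+$ (Nevanlinna representation; compare Proposition~\ref{Prop:Generator}), hence $\frac{d}{dt}\Im F_i(z,t) = \Im \Phi_i(F_i(z,t)) \geq 0$, so $\Im a(t), \Im b(t) \geq \delta := \min_{w \in K} \Im w > 0$ for $t \in [0,1]$. Combined with the hypothesis $F_i(K,t) \subset C$, both trajectories lie in $C \cap \set{w : \Im w \geq \delta}$, and one may take $\widehat C$ to be the closed convex hull of $C$ intersected with $\set{w : \Im w \geq \delta}$: it is compact, convex, and contained in $\set{\Im \geq \delta} \subset \mf C^+$.

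Next, since $\widehat C + \set{w : |w| \leq \delta/2}$ is a compact subset of $\mf C^+$ on which $\Phi_2$ is analytic, $\Phi_2$ is bounded there, say by $M$, and the Cauchy estimate gives $|\Phi_2'| \leq 2M/\delta =: L$ on $\widehat C$; as $\widehat C$ is convex, $\Phi_2$ is $L$-Lipschitz on $\widehat C$, with $L$ depending on $\Phi_2$, $C$, $K$ but not on $\Phi_1$. Now split
\[
D'(t) = \bigl( \Phi_1(a(t)) - \Phi_2(a(t)) \bigr) + \bigl( \Phi_2(a(t)) - \Phi_2(b(t)) \bigr).
\]
The first term has modulus $< \eps$ because $a(t) \in C$, and the second has modulus $\leq L\,|a(t) - b(t)| = L\,|D(t)|$ because $a(t), b(t) \in \widehat C$. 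Integrating from $0$ and using $D(0) = 0$,
\[
|D(t)| \leq \int_0^t |D'(s)|\,ds \leq \eps\, t + L \int_0^t |D(s)|\,ds, \qquad t \in [0,1],
\]
so Gronwall's inequality gives $|D(t)| \leq \eps\, t\, e^{Lt} \leq \eps\, e^{L}$ on $[0,1]$; in particular $|F_1(z,1) - F_2(z,1)| = |D(1)| \leq c\,\eps$ with $c := e^{L}$, and $c$ depends only on $\Phi_2$, $C$ and $K$, so in particular not on $\Phi_1$ and not on $z \in K$.

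The computation is otherwise routine, and the one point demanding care is the construction of $\widehat C$ in the first paragraph: one needs the trajectories of \emph{both} semigroups to remain in a compact subset of $\mf C^+$ whose distance to $\mf R$ is bounded below by a quantity not involving $\Phi_1$, because that lower bound is precisely what controls the Lipschitz constant of $\Phi_2$ and hence the final constant $c$. This uniformity in $\Phi_1$ is exactly what makes the lemma usable in the subsequent limiting arguments, where $\Phi_1$ runs through a sequence of generators converging to $\Phi_2$.
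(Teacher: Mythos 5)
Your proof is correct, and it takes a genuinely different route from the paper's. The paper runs a discrete Euler iteration: it subdivides $[0,1]$ into $N$ steps, uses a second-order Taylor estimate $\abs{F_i(z,t) - F_i(z,t_0) - (t-t_0)\Phi_i(F_i(z,t_0))} \leq \tfrac{M_2}{2}(t-t_0)^2$, accumulates the one-step errors into a geometric sum $\left(\tfrac{\eps}{N} + \tfrac{M_2}{N^2}\right)\sum_{k=0}^{N-1}\left(1 + \tfrac{M_1}{N}\right)^k$, and lets $N \to \infty$ to eliminate the $M_2$-dependence and arrive at $\tfrac{e^{M_1}-1}{M_1}\eps$ with $M_1 = \sup_C\abs{\Phi_2'}$. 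Your argument is the continuum limit of that: write $D' = (\Phi_1 - \Phi_2)(a) + (\Phi_2(a) - \Phi_2(b))$ and apply Gronwall, landing on the (slightly cruder, but perfectly adequate) bound $e^L\eps$. The two are morally identical, but the Gronwall version is shorter and, more importantly, you explicitly construct a compact \emph{convex} set $\widehat{C}$ containing both trajectories on which $\Phi_2$ is Lipschitz --- a point the paper implicitly uses when it passes from the pointwise bound $\sup_C\abs{\Phi_2'}$ to the difference $\abs{\Phi_2(F_1) - \Phi_2(F_2)}$, since that step really needs the segment joining $F_1(z,t_0)$ and $F_2(z,t_0)$ to stay in the controlled region.

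One small inaccuracy: your justification that both trajectories stay in $\set{\Im w \geq \delta}$ invokes $\Im\Phi_i \geq 0$, ``since each $F_i(\cdot,t)$ is an analytic self-map of $\mf{C}^+$.'' That implication is false for general semigroups of self-maps of $\mf{C}^+$ (e.g.\ $F_t(z) = e^{-t}z + (1-e^{-t})i$ has generator $\Phi(z) = i - z$, whose imaginary part is negative when $\Im z > 1$); in the paper's Proposition~\ref{Prop:Generator} the sign of $\Im\Phi$ is extracted from the additional hypothesis $m = \nu_1(\mf{R}) \leq 1$, which the lemma as stated does not impose. Fortunately the claim is unnecessary: the hypothesis $F_i(K,t) \subset C$ with $C$ compact in $\mf{C}^+$ already forces $\Im a(t), \Im b(t) \geq \delta' := \min_{w\in C}\Im w > 0$, and since any convex combination of points of $C$ also has imaginary part $\geq \delta'$, you may simply take $\widehat{C} = \overline{\mathrm{conv}}(C)$. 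With that substitution the rest of your argument goes through verbatim.
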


\begin{proof}
Note first that
\[
\frac{\partial^2 F_i(z,t)}{\partial t^2} = \Phi_i'(F_i(z,t)) \Phi_i(F_i(z,t)),
\]
so in particular this second derivative exists. Then denoting
\[
M_2 = \max_{i=1,2} \sup_{z \in K} \sup_{t \in [0,1]} \abs{\frac{\partial^2 F_i(z,t)}{\partial t^2}},
\]
by Taylor's formula
\[
\abs{F_i(z,t) - F_i(z,t_0) - (t - t_0) \Phi_i(F_i(z, t_0))} \leq \frac{M_2}{2} (t - t_0)^2.
\]
Thus
\[
\abs{F_i(z, t_0 + 1/N) - F_i(z,t_0) - \frac{1}{N} \Phi_i(F_i(z, t_0))} < \frac{M_2}{2 N^2}.
\]
So denoting $M_1 = \sup_{z \in C} \abs{\Phi_2'(z)}$, it follows that
\[
\begin{split}
& \abs{F_1(z, t_0 + 1/N) - F_2(t_0 + 1/N)} \\
&\qquad \leq \frac{1}{N} \left( \sup_{z \in C} \abs{\Phi_1(z) - \Phi_2(z)} + (N + \sup_{z \in K} \abs{\Phi_2'(z)}) \abs{F_1(z, t_0) - F_2(z, t_0)} \right) + \frac{M_2}{N^2} \\
&\qquad \leq \frac{\eps}{N} + \frac{M_2}{N^2} + \left( 1 + \frac{M_1}{N} \right) \abs{F_1(z, t_0) - F_2(z, t_0)}.
\end{split}
\]
Therefore
\[
\begin{split}
\abs{F_1(z, 1) - F_2(1)}
& \leq \left( \frac{\eps}{N} + \frac{M_2}{N^2} \right) \sum_{k=0}^{N-1} \left( 1 + \frac{M_1}{N} \right)^k \\
& \approx (e^{M_1} - 1) \left( \frac{\eps}{M_1} + \frac{M_2}{M_1 N} \right)
\approx \frac{e^{M_1} - 1}{M_1} \eps
\end{split}
\]
for large $N$.
\end{proof}

\begin{remark}
We note here that weak convergence of a uniformly bounded family of measures is equivalent to the convergence of their $F$-transforms uniformly on compact sets (see \cite{BV93}).
We will use this fact without reference throughout the paper.  With this in mind, consider the  monotone infinitely divisible measures
$\{\mu_{n} \}_{n \in \mathbb{N}}$ and $\mu$ with associated semigroup generators $\{ \Phi_{n} \}_{n \in \mathbb{N}}$ and $\Phi$, respectively.  If we assume that $\Phi_{n} \rightarrow \Phi$ uniformly on compact sets
then, according to the previous lemma, $\mu_{n} \rightarrow \mu$ vaguely.  This fact will play a key role in the proof of our main theorem.
\end{remark}

\subsection{Chernoff product formula}

We will use the following version of the Chernoff Product Formula.

\begin{proposition}
\label{Prop:Chernoff}
Let $(k_n)$ be an increasing sequence of positive integers, and $\set{V_n}_{n \in \mf{N}}$ a family of contractions on a Banach space $\mc{X}$. Suppose $B$ is an unbounded operator which generates a strongly continuous semigroup of contractions $\set{T(t) : t \geq 0}$, $\mc{D}$ is a core for $B$, and for each $x \in \mc{D}$,
\[
\lim_{n \uparrow \infty} k_n (V_n - I) x \rightarrow B x.
\]
Then for each $x \in \mc{X}$,
\[
\lim_{n \uparrow \infty} V_n^{k_n} x = T(1) x.
\]
\end{proposition}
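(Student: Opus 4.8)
The plan is to reduce the assertion to two standard ingredients: a comparison of the iterate $V_n^{k_n}$ with the exponential of its bounded ``discrete generator'', and the Trotter--Kato approximation theorem for the resulting semigroups. Since $k_1 < k_2 < \cdots$ are positive integers we have $k_n \to \infty$, and this divergence is the only feature of the sequence I will use. First I would set $A_n := k_n(V_n - I)$; each $A_n$ is bounded, and since $\norm{V_n^j} \le 1$ for all $j$,
\[
\norm{e^{t A_n}} = e^{-t k_n}\,\norm{\sum_{j \ge 0} \frac{(t k_n)^j}{j!}\, V_n^j} \le e^{-t k_n}\, e^{t k_n} = 1,
\]
so $\set{e^{t A_n} : t \ge 0}$ is a uniformly continuous contraction semigroup with generator $A_n$.

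The key step is the elementary ``square-root lemma'': for any contraction $V$ on $\mc{X}$ and any $m \in \mf{N}$,
\[
\norm{\bigl(V^m - e^{m(V - I)}\bigr) x} \le \sqrt{m}\ \norm{(V - I) x}, \qquad x \in \mc{X}.
\]
I would prove it by expanding $e^{m(V-I)} = e^{-m}\sum_{j \ge 0} \frac{m^j}{j!} V^j$, using the telescoping bound $\norm{(V^j - V^m) x} \le \abs{j - m}\,\norm{(V - I) x}$ (valid because $\norm{V} \le 1$), and then estimating
\[
e^{-m}\sum_{j \ge 0} \frac{m^j}{j!}\,\abs{j - m} = \Exp\abs{N - m} \le \bigl(\Exp (N - m)^2\bigr)^{1/2} = \sqrt{m},
\]
where $N$ is a Poisson random variable of parameter $m$. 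Applying the lemma with $V = V_n$ and $m = k_n$ yields
\[
\norm{\bigl(V_n^{k_n} - e^{A_n}\bigr) x} \le \sqrt{k_n}\ \norm{(V_n - I) x} = \frac{1}{\sqrt{k_n}}\, \norm{A_n x},
\]
and for $x \in \mc{D}$ the hypothesis $A_n x \to B x$ makes $\norm{A_n x}$ bounded, so $\norm{(V_n^{k_n} - e^{A_n}) x} \to 0$.

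Next, since the $A_n$ generate contraction semigroups, $B$ generates the contraction semigroup $\set{T(t) : t \ge 0}$, $\mc{D}$ is a core for $B$, and $A_n x \to B x$ for every $x \in \mc{D}$, the Trotter--Kato approximation theorem gives $e^{t A_n} x \to T(t) x$ for all $x \in \mc{X}$, uniformly for $t$ in compact intervals; in particular $e^{A_n} x \to T(1) x$ for all $x$. Combining this with the previous estimate, for $x \in \mc{D}$,
\[
\norm{V_n^{k_n} x - T(1) x} \le \norm{\bigl(V_n^{k_n} - e^{A_n}\bigr) x} + \norm{\bigl(e^{A_n} - T(1)\bigr) x} \longrightarrow 0,
\]
and since each $V_n^{k_n}$ and $T(1)$ is a contraction while $\mc{D}$ is dense in $\mc{X}$, an $\eps/3$-argument extends the limit to every $x \in \mc{X}$.

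The only genuinely non-routine point is the square-root lemma, which is the heart of Chernoff's argument; everything else is bookkeeping together with the citation to Trotter--Kato. The secondary thing to be careful about is that the version of Trotter--Kato invoked really applies: one needs equiboundedness of the approximating semigroups (automatic here, being contractions) and strong convergence of the generators on a core of the limit generator, and these are exactly the hypotheses available.
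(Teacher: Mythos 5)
Your proof is correct and follows essentially the same route as the paper's: define the bounded discrete generators $A_n = k_n(V_n - I)$, note the induced semigroups are contractive, invoke the Trotter--Kato approximation theorem to get $e^{A_n} x \to T(1)x$, and then compare $V_n^{k_n}$ with $e^{A_n}$ via the $\sqrt{k_n}$ estimate. The only difference is cosmetic: the paper cites Lemma~5.1 of Engel--Nagel for the inequality $\norm{(V^m - e^{m(V-I)})x} \le \sqrt{m}\,\norm{(V-I)x}$, whereas you reprove it from scratch (the Poisson variance computation you sketch is exactly the standard proof of that lemma), and you make the density extension from $\mc{D}$ to $\mc{X}$ explicit where the paper leaves it implicit after ``the result follows.''
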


The proof is very similar to the continuous version, Theorem~5.2 of \cite{Engel-Nagel-book}. It is provided for completeness.

\begin{proof}
Denote $B_n = k_n( V_n - I)$. Then $B_n x \rightarrow B x$ for all $x \in \mc{D}$. Moreover,
\[
\norm{e^{t B_n}} \leq e^{-t k_n} \norm{e^{t k_n V_n}}
\leq e^{-t k_n} \sum_{j=0}^\infty \frac{t^j k_n^j \norm{V_n^j}}{j!} \leq 1,
\]
so the semigroups $\set{e^{t B_n} : t \geq 0}$ are contractive. Therefore by the first Trotter-Kato Approximation Theorem (Theorem~4.8 in \cite{Engel-Nagel-book}), $e^{B_n t} \rightarrow T(t)$ strongly and, in particular,
\[
\norm{e^{B_n} x - T(1) x} \rightarrow 0
\]
for each $x \in \mc{X}$, as $n \uparrow \infty$. On the other hand, by Lemma~5.1 from \cite{Engel-Nagel-book},
\[
\norm{e^{B_n} x - V_n^{k_n} x}
= \norm{e^{k_n(V_n - I)} x - V_n^{k_n} x}
\leq \sqrt{k_n} \norm{V_n x - x}
= \frac{1}{\sqrt{k_n}} \norm{B_n x} \rightarrow 0.
\]
The result follows.
\end{proof}

\section{Limit theorems for monotone convolution on $\mf{R}$}

In this section we prove our main theorem.  The Chernoff product formula is at the heart of the proof of the forward direction ($a,b,c,e \Rightarrow d$ in the parlance of Theorem \ref{Theorem:Main}).
The reader should note that, to satisfy the requisite hypotheses, we must make additional moment assumptions on the relevant random variables (these assumptions will later be discarded).

Denote by $\mc{M}$ the \emph{sub-probability} measures, that is, positive Borel measures with total weight at most one, by $\mc{M}_2 \subset \mc{M}$ the subset of measures with finite variance, and by $\mc{M}_1$ general finite positive Borel measures on $\mf{R}$ with finite first moment.

\begin{proposition}
\label{Prop:Variance}
Let $\set{\mu_n}_{n \in \mf{N}} \subset \mc{M}_2$ be a family of sub-probability measures on $\mf{R}$. Suppose
\[
\mu_n^{\uplus k_n} \rightarrow \nu_{\uplus}^{m, \gamma, \sigma}
\]
weakly, where $\{ \mu_{n}^{\uplus k_{n}} \}_{n=1}^{\infty}$ and $\nu_{\uplus}^{m, \gamma, \sigma}$ have uniformly bounded,  finite variance.
Then
\[
\mu_n^{\rhd k_n} \rightarrow \nu_{\rhd}^{m, \gamma, \sigma}
\]
weakly.
\end{proposition}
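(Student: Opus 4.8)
The strategy is to apply the Chernoff product formula (Proposition~\ref{Prop:Chernoff}) with the Banach space $\mc{X} = C_0(\mf{R})$ (or a suitable closed subspace adapted to the total mass of the limit), the contractions $V_n$ being the composition operators attached to the $F$-transforms $F_{\mu_n}$, and the limiting semigroup being the one generated by $\Phi^{m,\gamma,\sigma}$ via Proposition~\ref{Prop:Generator}. First I would recall that $F_{\mu\rhd\nu} = F_\mu\circ F_\nu$, so that iterated monotone convolution corresponds to iterated composition, $F_{\mu_n^{\rhd k_n}} = F_{\mu_n}^{\circ k_n}$; on the level of Cauchy transforms this translates into an honest product of composition operators $V_n$ acting on functions on $\mf{C}^+$, and — after identifying $G_\mu$ with the measure $\mu$ via the Stieltjes inversion — into bounded operators on an appropriate function space. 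The key computation is then to show that $k_n(V_n - I) \to B$ on a core, where $B$ is the generator of the monotone convolution semigroup $\set{\nu_t}$ with $\nu_1 = \nu_\rhd^{m,\gamma,\sigma}$; by Proposition~\ref{Prop:Generator} this generator is governed by $\Phi^{m,\gamma,\sigma}(z) = -\gamma - \log(m) z + \int (1+xz)/(x-z)\,d\sigma(x) = -E_{\nu_\uplus^{\gamma,\sigma}}(z) - \log(m)z$.

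The link to the hypothesis is that weak convergence $\mu_n^{\uplus k_n} \to \nu_\uplus^{m,\gamma,\sigma}$ is, by the discussion preceding Lemma~\ref{Lemma:Basic-facts} and additivity of the $E$-transform, equivalent to $k_n E_{\mu_n} \to E_{\nu_\uplus^{m,\gamma,\sigma}}$ uniformly on compact subsets of $\mf{C}^+$, together with the mass condition $\mu_n(\mf{R})^{k_n} \to m$. Since $E_{\mu_n}(z) = \frac{1}{\mu_n(\mf{R})} z - F_{\mu_n}(z)$, and $\frac{1}{\mu_n(\mf{R})^{k_n}}$-type corrections contribute the $-\log(m)z$ term in the limit (because $k_n(\frac{1}{\mu_n(\mf{R})} - 1) \to -\log m$), one gets that $k_n(F_{\mu_n}(z) - z)$ converges to $-\Phi^{m,\gamma,\sigma}(z)$ on compacts; equivalently $k_n(z - F_{\mu_n}(z)) \to \Phi^{m,\gamma,\sigma}(z)$. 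This is precisely the infinitesimal generator condition needed to feed into Chernoff, after transporting it from the complex-analytic picture to the operator picture. Here is where the finite-variance assumption is used: by Lemma~\ref{Lemma:Maassen}(c), $\frac{1}{\mu(\mf{R})} z - F_\mu(z)$ is bounded on $\mf{C}_1^+$ with a uniform bound under a uniform variance assumption, and Lemma~\ref{Lemma:Maassen}(a)(b) give the boundedness of $G$, $zG$, $zG'$ that one needs to realize $V_n$ as genuine contractions and to control the derivatives appearing in the Chernoff remainder estimate $\norm{V_n x - x}$.

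I expect the main obstacle to be the functional-analytic bookkeeping: choosing the right Banach space and the right dense core $\mc{D}$ so that (i) the composition operators $V_n$ built from $F_{\mu_n}$ are contractions, (ii) the limiting operator $B$ generates a strongly continuous contraction semigroup on that space — this should follow from Proposition~\ref{Prop:Generator} and the estimate $\Im F_t(z) \ge \Im z$, but the strong-continuity and core claims require care — and (iii) the pointwise convergence $k_n(V_n - I)x \to Bx$ on $\mc{D}$ genuinely follows from the compact-convergence of $k_n(z - F_{\mu_n}(z)) \to \Phi^{m,\gamma,\sigma}(z)$. Once Chernoff delivers $V_n^{k_n} x \to T(1)x$ strongly, one reads this back as vague convergence $\mu_n^{\rhd k_n} \to \nu_\rhd^{m,\gamma,\sigma}$ via convergence of Cauchy transforms on compacts; the upgrade from vague to weak convergence then comes from tracking total masses, $(\mu_n^{\rhd k_n})(\mf{R}) = \mu_n(\mf{R})^{k_n} \to m = \nu_\rhd^{m,\gamma,\sigma}(\mf{R})$, using the remark after Lemma~\ref{Lemma:Basic-facts} that vague convergence plus convergence of total masses is weak convergence. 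A secondary technical point is handling the $-\log(m)z$ term uniformly, i.e.\ making sure the mass defect of the $\mu_n$ is correctly absorbed into the generator; this is exactly the phenomenon that makes the statement work for sub-probability measures and not only for probability measures.
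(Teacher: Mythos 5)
Your plan is, at its core, the paper's own proof: apply Chernoff's product formula (Proposition~\ref{Prop:Chernoff}) with $V_n h = h\circ F_{\mu_n}$ and generator $Bh = \Phi^{m,\gamma,\sigma}h'$, use the finite-variance hypothesis via Lemma~\ref{Lemma:Maassen} to make the remainder estimate uniform, and then pass from convergence of Cauchy transforms plus mass-tracking $\mu_n(\mf{R})^{k_n}\to m$ to weak convergence of $\mu_n^{\rhd k_n}$.

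Two specifics are off and worth flagging. First, the opening suggestion $\mc{X}=C_0(\mf{R})$ cannot be made to work: $F_{\mu_n}$ is an analytic self-map of $\mf{C}^+$, not of $\mf{R}$, so there is no composition operator on $C_0(\mf{R})$ that implements monotone convolution (unlike the classical setting, where the convolution semigroup genuinely acts on $C_0(\mf{R})$), and Stieltjes inversion is not a Banach-space isomorphism that transports the operators back. You half-correct this when you say the $V_n$ act on functions on $\mf{C}^+$; the paper's precise choice is $\mc{D}=\set{G_\rho : \rho\in\mc{M}_1}$ with $\mc{A}$ its closure in the sup norm on $\mf{C}_1^+$. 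This choice is what makes everything click: since $F_{\mu_n}(\mf{C}_1^+)\subset\mf{C}_1^+$ each $V_n$ is a contraction; $\mc{D}$ is $V_n$-invariant by Nevanlinna theory and invariant under the limiting semigroup (hence a core for $B$); and Lemma~\ref{Lemma:Maassen}'s bounds are stated exactly on $\mf{C}_1^+$, which is where you need them for $\norm{k_n(V_n-I)h - Bh}_\infty\to 0$. Second, you have a sign slip: the Boolean hypothesis gives $k_n\bigl(F_{\mu_n}(z)-z\bigr)\to\Phi^{m,\gamma,\sigma}(z)$ (not $-\Phi$), because $k_n(F_{\mu_n}(z)-z)=k_n\bigl(\tfrac{1}{\mu_n(\mf{R})}-1\bigr)z - k_nE_{\mu_n}(z)\to -\log(m)z - E_{\nu_\uplus^{m,\gamma,\sigma}}(z)=\Phi^{m,\gamma,\sigma}(z)$. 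This is the sign consistent with $Bh=\Phi h'$ being the generator of $T(t)h=h\circ F_t$, since $\tfrac{d}{dt}\big|_{t=0}h(F_t(z))=h'(z)\Phi(z)$.
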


\begin{proof}
Let
\[
\mc{D} = \set{G_{\rho}: \rho \in \mc{M}_1}.
\]
By Nevanlinna theory, $\mc{D}$ is invariant under composition operators by functions $F_\mu$ for $\mu \in \mc{M}_2$. Let $\mc{A}$ be the completion of $\mc{D}$ with respect to the uniform norm on $\mf{C}_1^+$, which we denote by $\norm{\cdot}_\infty$.
Then each right composition operator by $F_\mu$ is a contraction on $\mc{A}$ (since $F_{\mu}(\mathbb{C}^{+}_{1}) \subset \mathbb{C}^{+}_{1}$).

We will utilize Chernoff's theorem (Proposition \ref{Prop:Chernoff}).
Towards this end, we define operators on $\mathcal{A}$ by letting  $$V_{n} \cdot h := h \circ F_{\mu_{n}}$$ for $h \in \mathcal{A}$.  We further define
a (possibly unbounded) operator $$B \cdot h := \Phi^{m, \gamma,\sigma}h'$$ using the notation from Definition~\ref{Defn:Monotone-nu}.  Note that the notation for the operators match those within the statement of Proposition \ref{Prop:Chernoff}.
To invoke this result, we must show that $\mathcal{D}$ is a core for the operator $B$. This will follow once we show that $\mathcal{D}$ is in the domain of this operator.

Towards this end, let $h = G_\rho \in \mc{D}$.  We will show that $\Phi^{m, \gamma,\sigma}h'$ is a limit of elements in $\mathcal{A}$, proving that $\Phi^{m, \gamma,\sigma}h' \in \mathcal{A}$.  Indeed,
\[
\begin{split}
& \norm{k_n(h \circ F_{\mu_n} - h) - \Phi^{m, \gamma, \sigma} h'}_\infty \\
&\qquad = \sup_{z \in \mf{C}_1^+} \abs{\int_{\mf{R}} \left(k_n \left( \frac{1}{F_{\mu_n}(z) - x} - \frac{1}{z - x} \right) + \frac{\Phi^{m, \gamma, \sigma}(z)}{(z - x)^2} \right) \,d\rho(x)} \\
&\qquad = \sup_{z \in \mf{C}_1^+} \abs{\int_{\mf{R}} \left(\frac{k_n(z - F_{\mu_n}(z)) + \Phi^{m, \gamma, \sigma}(z)}{(F_{\mu_n}(z) - x) (z - x)} + \frac{(F_{\mu_n}(z) - z) \Phi^{m, \gamma, \sigma}(z)}{(F_{\mu_n}(z) - x)(z - x)^2} \right) \,d\rho(x)} \\
&\qquad \leq \sup_{z \in \mf{C}_1^+} \abs{\left(k_n(z - F_{\mu_n}(z)) + \Phi^{m, \gamma, \sigma}(z)\right) G_\rho(z)}
+ \sup_{z \in \mf{C}_1^+} \abs{(F_{\mu_n}(z) - z) \Phi^{m, \gamma, \sigma}(z) G_\rho'(z)} \\
&\qquad \leq \sup_{z \in \mf{C}_1^+} \abs{\left(F_{\mu_{n}^{\uplus_{k_{n}}}}(z) - F_{\nu_{\uplus}^{m, \gamma, \sigma}}(z) \right) G_\rho(z)} + \sup_{z \in \mf{C}_1^+} \abs{\left(k_n \left( 1 - \frac{1}{\mu_n(\mf{R})} \right) - \log (m) \right) z G_\rho(z)} \\
&\qquad\quad + \sup_{z \in \mf{C}_1^+} \abs{(F_{\mu_n}(z) - z) \Phi^{m, \gamma, \sigma}(z) G_\rho'(z)}.
\end{split}
\]
The hypothesis implies that $\mu_n(\mf{R})^{k_n} \rightarrow \nu_{\uplus}^{m, \gamma, \sigma}(\mf{R}) = m$, and in particular $\mu_n(\mf{R}) \rightarrow 1$. It also implies that $\mu_n^{\uplus k_n} \rightarrow \nu_{\uplus}^{m, \gamma, \sigma}$ and $\mu_n \rightarrow \delta_0$ weakly, and so  on any compact set $$\left(E_{\mu_{n}^{\uplus_{k_{n}}}}(z) - E_{\nu_{\uplus}^{m, \gamma, \sigma}}(z) \right), \ \ \ (F_{\mu_n}(z) - z)$$ converge to zero uniformly, and in addition
\[
k_n \left( 1 - \frac{1}{\mu_n(\mf{R})} \right) \approx - k_n \log \left( \frac{1}{\mu_n(\mf{R})} \right) = \log \mu_n(\mf{R})^{k_n} \rightarrow \log m.
\]
On the other hand, the same convergence results imply that the variances of $\set{\mu_{n}^{\uplus_{k_{n}}}}_{n=1}^\infty$ and $\set{\mu_n}_{n=1}^\infty$ are all uniformly bounded, and so by Lemma~\ref{Lemma:Maassen}, the functions $$\left(E_{\mu_{n}^{\uplus_{k_{n}}}}(z) - E_{\nu_{\uplus}^{m, \gamma, \sigma}}(z) \right), \ \ \ (F_{\mu_n}(z) - z) \Phi^{\gamma, \sigma}(z)$$ are bounded uniformly in $n$. Finally, by the same lemma $z G_\rho(z)$ is bounded and $G_\rho(z) = o(1)$, $G_\rho'(z) = o(1)$. Combining these results, we conclude that
$$\norm{k_n(h \circ F_{\mu_n} - h) - \Phi^{m, \gamma, \sigma} h'}_\infty \rightarrow 0.$$

We have just shown that
\[
k_n (V_n - I) h \rightarrow B h
\]
for $h \in \mc{D}$, so that $\mc{D}$ is indeed in the domain of $B$.
According to \cite{Berkson-Porta,Siskakis-review}, the operator $B$ is precisely the generator of the semigroup of composition operators corresponding to the semigroup of functions generated by $\Phi^{m, \gamma,\sigma}$. This semigroup is strongly continuous.
Since these composition operators preserve $\mc{D}$, we may conclude that $\mc{D}$ is a core for $B$. It now follows from Proposition~\ref{Prop:Chernoff} that $V_n^{k_n} \rightarrow T(1)$ strongly, where \[
T(1) h = h \circ F_{\nu_1} = h \circ F_{\nu_{\rhd}^{m, \gamma, \sigma}}.
\]
In particular, for $h(z) = \frac{1}{z}$
\[
\norm{G_{\mu_n^{\rhd k_n}} - G_{\nu_{\rhd}^{m, \gamma, \sigma}}}_\infty \rightarrow 0,
\]
which implies that
\[
\mu_n^{\rhd k_n} \rightarrow \nu_{\rhd}^{m, \gamma, \sigma}
\]
weakly.
\end{proof}

The converse to the Chernoff product formula is, in general, false \cite{Chernoff-Memoirs,Chernoff-Converse}.
We implicitly prove a variation of this converse that is quite specific to this setting (although it seems plausible that this proof may be adapted for a robust result in the setting of complex composition operators).
The following facts are necessary at several distinct steps of the proof of our main theorem so they are isolated for easy reference.

\begin{lemma}\label{Tech-lemma}
Let $\{ \mu_{n}: n \in \mathbb{N} \} \subset \mathcal{M}$  satisfy $$\underbrace{\mu_{n} \rhd \mu_{n} \cdots \rhd \mu_{n}}_{k_{n}} \rightarrow \nu_{\rhd}^{m, \gamma,\sigma}$$
weakly.  The following are true:
\begin{enumerate}
\item The family of measures $\{\mu_{n}^{\rhd j}: n\in \mathbb{N}, \ j=1,\ldots, k_{n} \}$ is tight.
\item The family $\set{k_n \Im (F_{\mu_n}(z) - z)}_{n \in \mf{N}}$ is pointwise bounded, and for every $\epsilon > 0$ there exist $\alpha , \beta > 0$ such that
$$k_{n} \Im{\left(F_{\mu_{n}}(z) - \frac{1}{\mu_n(\mf{R})} z \right)} \frac{m^{-1} - 1}{-\log(m)} \leq 2\Im{ \left(F_{\mu_{n}}^{\circ k_{n}}(z) - \frac{1}{\mu_n(\mf{R})^{k_n}} z \right)} \leq \epsilon |z| $$ for large
 $n \in \mathbb{N}$ and $z\in \Gamma_{\alpha,\beta}$.
\end{enumerate}
\end{lemma}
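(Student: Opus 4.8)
The plan is to analyze the forward orbit $z_\ell := F_{\mu_n}^{\circ\ell}(z)$, $0\le\ell\le k_n$ (with $\ell,n$ suppressed from the notation). Write $a=\mu_n(\mf{R})$ and $u(w):=\Im\bigl(F_{\mu_n}(w)-a^{-1}w\bigr)=-\Im E_{\mu_n}(w)\ge 0$ (the sign being Lemma~\ref{Lemma:Basic-facts}(a)); the engine is the discrete variation-of-parameters identity
\begin{equation*}
F_{\mu_n}^{\circ k_n}(z)-a^{-k_n}z=\sum_{\ell=0}^{k_n-1}a^{-(k_n-1-\ell)}\bigl(F_{\mu_n}(z_\ell)-a^{-1}z_\ell\bigr),
\end{equation*}
obtained by telescoping $a^{-(k_n-1-\ell)}(z_{\ell+1}-a^{-1}z_\ell)$. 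First I would record the easy consequences of the hypothesis: $a^{k_n}\to m$, so $a\to1$ and $\mu_n\to\delta_0$ weakly; hence $F_{\mu_n}\to\mathrm{id}$ and $E_{\mu_n}\to0$ uniformly on compact sets, and writing the Nevanlinna representation $F_{\mu_n}(w)=a^{-1}w-\gamma_n+\int\frac{1+xw}{x-w}\,d\sigma_n(x)$ (Lemma~\ref{Lemma:Basic-facts}(d)) one gets $\gamma_n\to0$, $\sigma_n(\mf{R})\to0$, and $\{\sigma_n\}$ tight; note $u(w)=\int\frac{\Im w\,(1+x^2)}{\abs{x-w}^2}\,d\sigma_n(x)$ is the Poisson integral of the finite measure $\pi(1+x^2)\,d\sigma_n$. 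Taking imaginary parts of the identity shows $\Im z_\ell$ is nondecreasing in $\ell$, that $\Im z_\ell-a^{-\ell}\Im z_0\le\Im z_{k_n}-a^{-k_n}\Im z_0=\Im\bigl(F_{\mu_n^{\rhd k_n}}(z)-a^{-k_n}z\bigr)$, and that this last quantity converges as $n\to\infty$, hence is bounded in $n$ for fixed $z$.

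The right-hand inequality in part (2) is then immediate: $2\Im\bigl(F_{\mu_n}^{\circ k_n}(z)-a^{-k_n}z\bigr)\le 2\bigl\lvert F_{\mu_n^{\rhd k_n}}(z)-\tfrac{1}{(\mu_n^{\rhd k_n})(\mf{R})}z\bigr\rvert$, which by Lemma~\ref{Lemma:Basic-facts}(c) applied to the tight (convergent) family $\{\mu_n^{\rhd k_n}\}$ is $o(\abs z)$ uniformly in $n$ on a Stolz angle, hence $\le\eps\abs z$ once $\beta$ is large. For the left-hand inequality I would prove the Poisson-kernel comparison $u(z_\ell)\ge a^{\ell}(1-\eps)\,u(z_0)$, valid for all $\ell\le k_n$ provided $z_0$ lies in a narrow enough ($\alpha$ large, depending on $m,\eps$) and tall enough ($\beta$ large) Stolz angle and the orbit stays in a slightly larger one: since $\Im z_\ell\ge a^{-\ell}\Im z_0$ and $z_\ell$ differs from $a^{-\ell}z_0$ by a bounded amount, comparing the kernels $\frac{\Im z_\ell}{\abs{x-z_\ell}^2}$ and $\frac{\Im z_0}{\abs{x-z_0}^2}$ yields this, with the loss $\eps\to0$ as the cone narrows and rises. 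Then, using $\sum_{\ell=0}^{k_n-1}a^{-(k_n-1-\ell)}a^{\ell}=\frac{a(a^{k_n}-a^{-k_n})}{a^2-1}$, which since $a\to1$, $a^{k_n}\to m$, $1-a\sim\frac{-\log m}{k_n}$ equals $k_n\frac{m^{-1}-1}{-2\log m}(1+m)(1+o(1))$,
\begin{equation*}
\Im\bigl(F_{\mu_n}^{\circ k_n}(z)-a^{-k_n}z\bigr)=\sum_{\ell}a^{-(k_n-1-\ell)}u(z_\ell)\ \ge\ (1-\eps)(1+m)(1+o(1))\cdot\tfrac12\,k_n\,u(z_0)\,\tfrac{m^{-1}-1}{-\log m},
\end{equation*}
and since $1+m>1$ one may choose $\eps$ small and $n$ large so that $(1-\eps)(1+m)(1+o(1))\ge1$; as $u(z_0)=\Im\bigl(F_{\mu_n}(z)-a^{-1}z\bigr)$ this is the asserted inequality. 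It also forces $k_nu(z)$ bounded, and $\Im(F_{\mu_n}(z)-z)=u(z)+(a^{-1}-1)\Im z$ with $k_n(a^{-1}-1)\to-\log m$, giving pointwise boundedness of $\{k_n\Im(F_{\mu_n}(z)-z)\}_n$ on the cone; for general $z\in\mf{C}^+$ one concludes from $u(z)\le C(z)\,\sigma_n(\mf{R})\,\Im z$ together with $k_n\sigma_n(\mf{R})=k_nu(i)$ bounded (obtained the same way, with $z_0=i$ and the crude version $u(z_\ell)\ge c\,u(i)$ of the comparison).

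For part (1), run the orbit from $z_0=iN$. The estimates above give $\Im z_\ell=a^{-\ell}N+r_\ell$ with $0\le r_\ell\le S$ and $\abs{\Re z_\ell}\le P$, uniformly in $n$ and $\ell\le k_n$, with $S/N,\,P/N\to0$ as $N\to\infty$. Applying the elementary tail estimate $\rho\bigl([-N,N]^c\bigr)\le 2\bigl(\rho(\mf{R})+\Im(NG_\rho(iN))\bigr)$ to $\rho=\mu_n^{\rhd\ell}$, whose $F$-transform at $iN$ is $z_\ell$ and whose mass is $a^{\ell}$, and using
\begin{equation*}
a^{\ell}-\frac{N\,\Im z_\ell}{\abs{z_\ell}^2}=\frac{a^{\ell}\bigl((\Re z_\ell)^2+r_\ell\Im z_\ell\bigr)}{\abs{z_\ell}^2}\le\frac{P^2}{N^2}+\frac{S}{N},
\end{equation*}
we obtain $\mu_n^{\rhd\ell}\bigl([-N,N]^c\bigr)\le 2(P^2/N^2+S/N)\to0$ uniformly; the finitely many small $n$ are individually tight, so $\{\mu_n^{\rhd\ell}\}$ is tight.

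\textbf{The main obstacle} is the confinement of the orbit to a fixed region $\Omega\subset\mf{C}^+$ (a Stolz angle for (2), a box for (1)); the naive Gr\"onwall bound for $z_{\ell+1}=a^{-1}z_\ell-E_{\mu_n}(z_\ell)$ accumulates errors over $k_n\to\infty$ steps and blows up. What must be exploited is that $a^{-\ell}$ stays bounded (by $\sim m^{-1}$, as $a^{k_n}\to m$) and that $\sum_{\ell<k_n}u(z_\ell)\le\sum_{\ell<k_n}a^{-(k_n-1-\ell)}u(z_\ell)$ is bounded by the identity; then one argues by a first-exit-time/continuity argument. Let $T_n$ be the last $\ell$ with $z_0,\dots,z_\ell\in\Omega$; on $\ell\le T_n$ all of the above is available, in particular $\abs{E_{\mu_n}(z_\ell)}\le\abs{\gamma_n}+C_\Omega\bigl(u(z_\ell)+\sigma_n(\mf{R})\bigr)$ and — from the real and imaginary parts of the identity run from $z_0=i$ — $k_n\abs{\gamma_n}$ and $k_n\sigma_n(\mf{R})$ are bounded (here the factors $\tfrac{a^{-k_n}-1}{a^{-1}-1}\sim k_n\tfrac{m^{-1}-1}{-\log m}$ and $\tfrac{a(a^{k_n}-a^{-k_n})}{a^2-1}$ enter), so $\sum_{\ell\le T_n}\abs{z_{\ell+1}-z_\ell}\le(a^{-1}-1)\,\mathrm{diam}(\Omega)\,k_n+k_n\abs{\gamma_n}+C_\Omega\,k_n\sigma_n(\mf{R})$ is bounded; choosing $\Omega$ large relative to this bound forces $T_n=k_n$. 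Making this self-consistent — the constants cutting out $\Omega$ depend on the very bounds being derived — is the delicate point; everything else is bookkeeping with the telescoping identity, the explicit Poisson kernel, and the elementary asymptotics as $a\to1$, $a^{k_n}\to m$.
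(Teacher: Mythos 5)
Your part (b) matches the paper's argument in its essentials: the same telescoping identity
\[
F_{\mu_n}^{\circ k_n}(z)-a^{-k_n}z=\sum_{\ell=0}^{k_n-1}a^{-(k_n-1-\ell)}\bigl(F_{\mu_n}(z_\ell)-a^{-1}z_\ell\bigr),\qquad a=\mu_n(\mf{R}),
\]
the same reduction of the right-hand inequality to tightness of $\{\mu_n^{\rhd k_n}\}$ via Lemma~\ref{Lemma:Basic-facts}(c), and the same Poisson-kernel comparison between $u(z_\ell)$ and $u(z_0)$. If anything your version of that comparison, $u(z_\ell)\ge a^\ell(1-\eps)u(z_0)$, is the geometrically sharp one: when $z_\ell\approx a^{-\ell}z_0$ the infimum over real $t$ of $|t-z_0|^2/|t-z_\ell|^2$ is close to $a^{2\ell}$ rather than to $1$, and the paper's statement of the claim without the $a^\ell$ factor over-reaches slightly for $m<1$; the extra power of $a$ is absorbed by the asymptotics of the geometric sum, and the factor $2$ in the lemma's conclusion leaves room in either case, so the two arguments land in the same place.

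The real divergence, and the point where the proposal stalls, is part (a). You have correctly identified the obstacle: your tail estimate requires a uniform bound $|\Re z_\ell|\le P$ with $P/N\to0$, but the telescoping identity controls only $\Im z_\ell$ (its summands are nonnegative); the real parts $\Re\bigl(F_{\mu_n}(z_\ell)-a^{-1}z_\ell\bigr)$ have no sign, so the real part of the orbit cannot be confined by the same device. Your proposed first-exit repair appeals to boundedness of $k_n|\gamma_n|$ and $k_n\sigma_n(\mf{R})$ "from the identity run from $z_0=i$," but in the paper those bounds are obtained \emph{after} and \emph{from} Lemma~\ref{Tech-lemma} (they appear in the converse half of Proposition~\ref{Prop:Additive}, which invokes both parts (a) and (b)); so as written the patch is circular.

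The paper's proof of (a) avoids the confinement problem altogether by arguing by contradiction and never looking at $\Re z_\ell$. The only input is the monotonicity observation you also record: since $F_{\mu_n}$ increases imaginary parts, $\Im(F_{\mu_n}^{\circ j}(z)-a^{-j}z)\le\Im(F_{\mu_n}^{\circ k_n}(z)-a^{-k_n}z)<\eps|z|$ on a Stolz angle, uniformly in $j\le k_n$ and $n$. If $\{\mu_n^{\rhd j}\}$ were not tight, one could pick $(n,j)$ with $\mu_n^{\rhd j}\bigl([-K,K]^c\bigr)>\delta$ for arbitrarily large $K$, split $\mu_n^{\rhd j}=\rho_K+\lambda_K$ into a truncation and a tail, and use the Nevanlinna bound $\Im F_{\rho_K}(z)\ge(a^j-\delta)^{-1}\Im z$ together with $G_{\lambda_K}(z)\to0$ as $K\to\infty$ to force $\Im\bigl(F_{\mu_n^{\rhd j}}(iy)-a^{-j}iy\bigr)\ge\delta y$, contradicting the $\eps|z|$ bound once $\eps<\delta$. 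Once (a) is established this way, Lemma~\ref{Lemma:Basic-facts}(c) gives exactly the uniform $o(|z|)$ control of $z_\ell-a^{-\ell}z$ that your kernel comparison in (b) needs, and (b) closes as you wrote it. So: your computations are right, but you are missing the paper's organizing idea — prove tightness first, by contradiction, using only imaginary parts — which is what makes the orbit-confinement issue evaporate.
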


\begin{proof}
We begin with property $(a)$ listed above.
The family $\set{\mu_n^{\rhd k_n}}_{n \in \mf{N}}$ converges to $\nu_{\rhd}^{m, \gamma, \sigma}$ by assumption, and therefore is tight.
It follows that for any $\eps$, there is a Stolz angle $\Gamma_{\alpha, \beta}$, such that $\Im(F_{\mu_n}^{\circ k_n}(z) - \frac{1}{\mu_n(\mf{R})^{k_n}} z) < \eps \abs{z}$ for $z \in \Gamma_{\alpha, \beta}$.
Since $F_{\mu_n}$ increases the imaginary part, it is also true that  $\Im(F_{\mu_n}^{\circ j}(z) - \frac{1}{\mu_n(\mf{R})^j} z) < \eps \abs{z}$ for any $1 \leq j \leq k_n$.

In order to show that the family $\set{\mu_n^{\rhd j} : n \in \mf{N}, 1 \leq j \leq k_n}$ is tight, we assume that this does not hold and obtain a contradiction.
Suppose that there exists a $\delta > 0$ such that for any $K \in \mf{N}$ there are $j(K)$, $n(K)$ with $\mu_{n(K)}^{\rhd j(K)}([-K,K]^c) > \delta$.
We define $$\rho_K = \mu_{n(K)}^{\rhd j(K)}|_{[-K,K]}, \ \ \ \lambda_K = \mu_{n(K)}^{\rhd j(K)} - \rho_K$$ Note that $\rho_K(\mf{R}) \leq \mu_{n(K)}(\mf{R})^{j(K)} - \delta$. It then follows from the Nevanlinna representation of $F_{\rho_K}$ that
\[
\Im (F_{\rho_K}(z)) \geq (\mu_{n(K)}(\mf{R})^{j(K)} - \delta)^{-1} \Im (z),
\]
\[
\Im \left(F_{\rho_K}(z) - \frac{1}{\mu_{n(K)}(\mf{R})^{j(K)}} z \right) \geq \frac{\delta}{(\mu_{n(K)}(\mf{R})^{j(K)} - \delta) \mu_{n(K)}(\mf{R})^{j(K)}} \Im (z).
\]
Also, for any fixed $z$, $G_{\lambda_K}(z) \rightarrow 0$ as $K \rightarrow \infty$. Since $$F_{\mu_{n(K)}^{\rhd j(K)}} = \frac{F_{\rho_K}}{1 + F_{\rho_K} G_{\lambda_K}}$$ it follows that for sufficiently large $K$,
\[
\Im \left(F_{\mu_{n(K)}^{\rhd j(K)}}(z) - \frac{1}{\mu_{n(K)}(\mf{R})^{j(K)}} z \right) \geq \frac{\delta}{\mu_{n(K)}(\mf{R})^{2 j(K)}} \Im (z).
\]
Taking $z$ purely imaginary and $\eps = \frac{\delta}{\mu_{n(K)}(\mf{R})^{2 j(K)}}$, we obtain a contradiction.

In order to address property $(b)$,
we first claim that for $\epsilon > 0$ there exists a Stolz angle $\Gamma_{\alpha,\beta}$ so that
\[
\Im{ \left(F_{\mu_{n}}^{\circ j}(z) - \frac{1}{\mu_n(\mf{R})} F_{\mu_{n}}^{\circ (j-1)}(z) \right)} \geq (1-\epsilon) \Im{ \left(F_{\mu_{n}}(z) - \frac{1}{\mu_n(\mf{R})} z \right)}
\]
for all $n \in \mathbb{N}$, $j=1,\ldots,k_{n}$ and $z \in \Gamma_{\alpha,\beta}$.

Indeed, consider the following chain of equalities and inequalities:
\begin{align*}
& \Im{ \left(F_{\mu_{n}}^{\circ j}(z) - \frac{1}{\mu_n(\mf{R})} F_{\mu_{n}}^{\circ (j-1)}(z) \right)}
\\ &= \Im{\left(\left[ -\gamma_{n} + \frac{1}{\mu_n(\mf{R})} F_{\mu_{n}}^{\circ (j-1)}(z) + \int_{\mathbb{R}}\frac{1+tF_{\mu_{n}}^{j-1}(z)}{t-F_{\mu_{n}}^{\circ (j-1)}(z)}d\sigma_{n}(t) \right] - \frac{1}{\mu_n(\mf{R})} F_{\mu_{n}}^{\circ (j-1)}(z)   \right)}
\\ &= \int_{\mathbb{R}} \frac{\Im{(F_{\mu_{n}}^{\circ (j-1)}(z))}(1+t^{2})}{|t-F_{\mu_{n}}^{\circ (j-1)}(z)|^{2}}d\sigma_{n}(t)
\\ &= \int_{\mathbb{R}} \frac{\Im{(F_{\mu_{n}}^{\circ (j-1)}(z))}(1+t^{2})}{|t-z|^{2}} \frac{|t-z|^{2}}{|t-F_{\mu_{n}}^{\circ (j-1)}(z)|^{2}}d\sigma_{n}(t)
\\ &\geq \int_{\mathbb{R}} \frac{\Im{(z)}(1+t^{2})}{|t-z|^{2}} \frac{|t-z|^{2}}{|t-F_{\mu_{n}}^{\circ (j-1)}(z)|^{2}}d\sigma_{n}(t)
\\ &\geq \inf_{t\in \mathbb{R}} \left\{\frac{|t-z|^{2}}{|t-F_{\mu_{n}}^{\circ (j-1)}(z)|^{2}} \right\}  \int_{\mathbb{R}} \frac{\Im{(z)}(1+t^{2})}{|t-z|^{2}} d\sigma_{n}(t)
\\ &= \inf_{t\in \mathbb{R}} \left\{ \frac{|t-z|^{2}}{|t-F_{\mu_{n}}^{\circ (j-1)}(z)|^{2}} \right\}  \Im{ \left(F_{\mu_{n}}(z) - \frac{1}{\mu_n(\mf{R})} z \right)}
\end{align*}
The inequalities arise because the above integrands are non-negative.  The first inequality is a result of the fact that $F$-transforms increase the imaginary part.

Our claim will follow if we can show that the infimum above is arbitrarily close to $1$ for all $z$ in a sufficiently small Stolz angle.
Indeed, we have shown that $\{\mu_{n}^{\rhd j} \}_{n\in \mathbb{N}, \ j=1,\ldots,k_{n}}$ forms a tight family of measures.  By Lemma~\ref{Lemma:Basic-facts}(c),
this implies that $|F_{\mu_{n}}^{\circ j}(z) - \frac{1}{\mu_n(\mf{R})^j} z| = o(|z|)$ uniformly over $j$ and $n$ for $z$ in a sufficiently small Stolz angle.
The claim follows from simple geometric considerations. Note that tightness also implies that the infimum is finite for every fixed $z$ (that is, $z$ need not lie in the Stolz angle).

Next, observe that we may utilize this claim to attain a bound for $k_{n}\Im{(F_{\mu_{n}}(z) - \frac{1}{\mu_n(\mf{R})} z)}$ on this Stolz angle.
Indeed, if we recall that $\Im{(F_{\mu_{n}}^{\circ j}(z) - \frac{1}{\mu_n(\mf{R})} F_{\mu_{n}}^{\circ (j-1)}(z) )} \geq 0$ for $j=1,\ldots , k_{n}$, a mild telescoping argument implies that
\begin{align*}
k_n \Im{\left(F_{\mu_{n}}(z) - \frac{1}{\mu_n(\mf{R})} z \right)} \frac{m^{-1} - 1}{-\log(m)}
& \approx \Im{\left(F_{\mu_{n}}(z) - \frac{1}{\mu_n(\mf{R})} z \right)} \frac{\frac{1}{\mu_n(\mf{R})^{k_n}} - 1}{\frac{1}{\mu_n(\mf{R})} - 1} \\
& = \Im{\left(F_{\mu_{n}}(z) - \frac{1}{\mu_n(\mf{R})} z \right)} \sum_{j=1}^{k_n} \frac{1}{\mu_n(\mf{R})^{k_n - j}} \\
  & \leq (1-\epsilon)^{-1} \sum_{j=1}^{k_{n}} \Im{ \left(F_{\mu_{n}}^{\circ j}(z) - \frac{1}{\mu_n(\mf{R})} F_{\mu_{n}}^{\circ (j-1)}(z) \right)} \frac{1}{\mu_n(\mf{R})^{k_n - j}}
 \\   & =  (1-\epsilon)^{-1} \Im{ \left(F_{\mu_{n}}^{\circ k_{n}}(z) - \frac{1}{\mu_n(\mf{R})^{k_n}} z \right)}
\end{align*}
for $\eps < 1/2$ and sufficiently large $n$, and where $\frac{m^{-1} - 1}{-\log(m)} = 1$ for $m=1$.  Note that the right hand side of this inequality is uniformly $o(|z|)$ for $z \in \Gamma_{\alpha, \beta}$.  This is a result of the fact that the $F$-transforms of
a uniformly bounded, tight family of measures have this property (Lemma \ref{Lemma:Basic-facts}) where the tightness is a consequence of the fact that the family of measures converges.
This proves the second statement in part $(b)$. Finally, if $z$ is fixed, pointwise finiteness of the infimum and the same argument imply the first statement in part (b).
\end{proof}

\begin{remark}\label{remark}
Part $(a)$ utilizes an approach found in \cite{Williams}. Part $(b)$ in the previous lemma provides an estimate for the finite measures arising from the Nevanlinna representations of the associated $F$-transforms.
Indeed, given that $$F_{\mu_{n}}(z) = - \gamma_{n} + \frac{1}{\mu_n(\mf{R})} z + \int_{\mathbb{R}}\frac{1+tz}{t-z}d\sigma_{n}(t)$$ we have that
$$k_n \sigma_{n}(|t|>y) \leq 2 k_{n}\int_{\mathbb{R}}\frac{1+t^{2}}{t^{2} + y^{2}}d\sigma_{n}(t) = \frac{2 k_{n}}{y} \Im{ \left(F_{\mu_{n}}(iy) - \frac{1}{\mu_n(\mf{R})} iy \right)},$$
and the previous lemma provides us with a bound for the right hand side of the inequality (in the case where the monotone infinitesimal array converges).
This estimate will be used in our proof of the main theorem.
\end{remark}

\begin{proposition}
\label{Prop:Additive}
Let $\set{\mu_n}_{n \in \mf{N}}$ be a family of sub-probability measures on $\mf{R}$. Then
\[
\mu_n^{\uplus k_n} \rightarrow \nu_{\uplus}^{m, \gamma, \sigma}
\]
weakly if and only if
\[
\mu_n^{\rhd k_n} \rightarrow \nu_{\rhd}^{m, \gamma, \sigma}
\]
weakly.
\end{proposition}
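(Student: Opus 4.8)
\emph{Strategy.} The assertion is two implications. The forward one, $\mu_n^{\uplus k_n} \to \nu_\uplus^{m,\gamma,\sigma} \Rightarrow \mu_n^{\rhd k_n} \to \nu_\rhd^{m,\gamma,\sigma}$, is exactly Proposition~\ref{Prop:Variance} with the finite-variance hypotheses deleted, and I would obtain it by truncation and reduction to Proposition~\ref{Prop:Variance}. The converse I would obtain by a compactness-plus-uniqueness argument that feeds on the (now established) forward implication. Throughout I use the dictionary that for sub-probability measures weak convergence is equivalent to locally uniform convergence of the $F$-transforms, equivalently to such convergence of the $E$-transforms together with convergence of the total masses, and the Nevanlinna representation $F_{\mu_n}(z) = -\gamma_n + \frac{1}{\mu_n(\mf{R})}z + \int_{\mf{R}} \frac{1+xz}{x-z}\,d\sigma_n(x)$ of Lemma~\ref{Lemma:Basic-facts}(d), so that $E_{\mu_n}(z) = \gamma_n + \int_{\mf{R}}\frac{1+xz}{z-x}\,d\sigma_n(x)$. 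Since for each fixed $z \in \mf{C}^+$ the kernel $x \mapsto \frac{1+xz}{z-x}$ is bounded and continuous, and using the formula for $-E_{\nu_\uplus^{m,\gamma,\sigma}}$ in Remark~\ref{Remark:Levy}, one sees that $\mu_n^{\uplus k_n} \to \nu_\uplus^{m,\gamma,\sigma}$ weakly is equivalent to the conjunction: $\mu_n(\mf{R})^{k_n} \to m$, $k_n\gamma_n \to \gamma$, and $k_n\sigma_n \to \sigma$ weakly.

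\emph{Forward direction.} Fix a continuity point $R$ of $\sigma$ and let $\mu_n^{(R)} \in \mc{M}$ have $F$-transform $-\gamma_n + \frac{1}{\mu_n(\mf{R})}z + \int_{[-R,R]}\frac{1+xz}{x-z}\,d\sigma_n(x)$; this is a genuine $F$-transform by Lemma~\ref{Lemma:Basic-facts}(b), $\mu_n^{(R)}(\mf{R}) = \mu_n(\mf{R})$, and $\mu_n^{(R)} \in \mc{M}_2$ since $\sigma_n$ has been truncated to $[-R,R]$. Moreover the variances of $\mu_n^{(R),\uplus k_n}$ and of $\nu_\uplus^{m,\gamma,\sigma|_{[-R,R]}}$ are finite and, being controlled (up to a constant depending on $R$) by $k_n\sigma_n(\mf{R})$ and $\sigma(\mf{R})$ respectively, uniformly bounded in $n$. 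From the dictionary $\mu_n^{(R),\uplus k_n} \to \nu_\uplus^{m,\gamma,\sigma|_{[-R,R]}}$ weakly, so Proposition~\ref{Prop:Variance} gives $\mu_n^{(R),\rhd k_n} \to \nu_\rhd^{m,\gamma,\sigma|_{[-R,R]}}$ weakly. Letting $R \uparrow \infty$ through continuity points, $\Phi^{m,\gamma,\sigma|_{[-R,R]}} \to \Phi^{m,\gamma,\sigma}$ uniformly on compacts, so by Lemma~\ref{Lemma:Distance} (via the remark following it) $\nu_\rhd^{m,\gamma,\sigma|_{[-R,R]}} \to \nu_\rhd^{m,\gamma,\sigma}$, the convergence being weak since all these measures have total mass $m$. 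It remains to see that $\mu_n^{\rhd k_n}$ and $\mu_n^{(R),\rhd k_n}$ are close uniformly in $n$ once $R$ is large: the difference $k_n\left(F_{\mu_n} - F_{\mu_n^{(R)}}\right)(z) = k_n\int_{\abs{x}>R}\frac{1+xz}{x-z}\,d\sigma_n(x)$ is bounded on each compact by a constant times $\sup_n k_n\sigma_n(\abs{x}>R)$, which tends to $0$ as $R\uparrow\infty$ by tightness of $\set{k_n\sigma_n}$; since $F_{\mu_n}$ and $F_{\mu_n^{(R)}}$ are $O(1/k_n)$-close to the identity on compacts and increase imaginary parts, a discrete analogue of Lemma~\ref{Lemma:Distance} — per-step error $O(1/k_n)$, amplified over $k_n$ steps only by a factor bounded uniformly in $n$ — yields $F_{\mu_n}^{\circ k_n} - F_{\mu_n^{(R)}}^{\circ k_n} \to 0$ locally uniformly, uniformly in $n$. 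A diagonal choice $R = R_n \uparrow \infty$, together with $\mu_n^{\rhd k_n}(\mf{R}) = \mu_n(\mf{R})^{k_n} \to m$, then gives $\mu_n^{\rhd k_n} \to \nu_\rhd^{m,\gamma,\sigma}$ weakly.

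\emph{Converse direction.} Suppose $\mu_n^{\rhd k_n} \to \nu_\rhd^{m,\gamma,\sigma}$ weakly. By Lemma~\ref{Tech-lemma}(a) the family $\set{\mu_n^{\rhd j} : n \in \mf{N},\, j \leq k_n}$ is tight, and by Lemma~\ref{Tech-lemma}(b) together with the estimate in the remark following it, $\set{k_n\sigma_n}$ is uniformly bounded and tight, hence relatively weakly sequentially compact; also $\mu_n(\mf{R})^{k_n} = \mu_n^{\rhd k_n}(\mf{R}) \to m$. A weighted telescoping of the \emph{real} parts of $F_{\mu_n}^{\circ j}(z) - \frac{1}{\mu_n(\mf{R})}F_{\mu_n}^{\circ (j-1)}(z)$ — the real-part counterpart of the computation in Lemma~\ref{Tech-lemma}(b), using that a fixed orbit $\set{F_{\mu_n}^{\circ j}(z) : j \leq k_n}$ remains in a bounded region and that $k_n\sigma_n(\mf{R})$ is bounded — shows that $\set{k_n\gamma_n}$ is bounded. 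Now pass to any subsequence along which $k_n\gamma_n \to \tilde\gamma$ and $k_n\sigma_n \to \tilde\sigma$ weakly; by the dictionary $\mu_n^{\uplus k_n} \to \nu_\uplus^{m,\tilde\gamma,\tilde\sigma}$ weakly along it, so the forward direction gives $\mu_n^{\rhd k_n} \to \nu_\rhd^{m,\tilde\gamma,\tilde\sigma}$ along it, forcing $\nu_\rhd^{m,\tilde\gamma,\tilde\sigma} = \nu_\rhd^{m,\gamma,\sigma}$. By Proposition~\ref{Prop:Generator} the monotone convolution semigroup — hence its generator $\Phi$, hence the pair $(\gamma,\sigma)$ via uniqueness of the Nevanlinna representation — is determined by $\nu_\rhd^{m,\gamma,\sigma}$, so $\tilde\gamma = \gamma$ and $\tilde\sigma = \sigma$. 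Since every subsequence admits such a further subsequence, $k_n\gamma_n \to \gamma$ and $k_n\sigma_n \to \sigma$ weakly along the full sequence, which together with $\mu_n(\mf{R})^{k_n} \to m$ is precisely $\mu_n^{\uplus k_n} \to \nu_\uplus^{m,\gamma,\sigma}$ weakly.

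\emph{Main obstacle.} The two points that are not routine bookkeeping are (i) the uniform-in-$n$ composition-stability estimate in the forward direction — a discrete, Euler-type analogue of Lemma~\ref{Lemma:Distance}, where one must check that the $k_n$-fold iterates stay in a common bounded region and that the error-amplification factors $\prod(1 + O(1/k_n))$ remain bounded — and (ii) the a priori bound on $\set{k_n\gamma_n}$ in the converse, since the tightness machinery of Lemma~\ref{Tech-lemma} controls the imaginary parts, hence the measures $\sigma_n$, directly, but reaches the real shifts $\gamma_n$ only through the above telescoping.
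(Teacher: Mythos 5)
Your proposal follows the paper's proof essentially step for step: for the forward direction, truncate the L\'evy measure of $\mu_n$ to reduce to Proposition~\ref{Prop:Variance}, use Lemma~\ref{Lemma:Distance} to compare the truncated and untruncated limit measures, and control the truncation error in the $k_n$-fold monotone iterate by a telescoping estimate built on the tightness and tail bounds of Lemma~\ref{Tech-lemma} and Remark~\ref{remark}; for the converse, combine relative compactness of $\set{k_n\sigma_n}$, a real-part telescoping bound on $\set{k_n\gamma_n}$, and the (already proved) forward implication together with uniqueness of the pair $(\gamma,\sigma)$. The two points you single out as obstacles are precisely what the paper carries out in full --- the weak-neighborhood construction making $\set{\mu_{n,N}^{\rhd j}}$ tight (hence producing the compact set $C$ and the derivative bound $M_0$ needed for the per-step error amplification) and the real-part telescoping for $k_n\gamma_n$ --- so your proposal is a faithful condensation of the published argument.
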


\begin{proof}
As a first step, we extend the result in Proposition \ref{Prop:Variance} to full generality. Assume that $\{\mu_{n} \}_{n \in \mathbb{N}}$ satisfies
$\mu_{n} \uplus \cdots \uplus \mu_{n} \rightarrow \nu^{m, \gamma,\sigma}_{\uplus}$
where we have no moment assumptions on any of the relevant probability measures.

Consider the functions
$$ -E_{\mu_{n}}(z)  =  F_{\mu_{n}}(z) - \frac{1}{\mu_n(\mf{R})} z = - \gamma_{n} + \int^{\infty}_{-\infty} \frac{1+tz}{t-z}d\sigma_{n}(t)$$
where the function on the right hand side is the Nevanlinna representation
and recall that our hypothesis is equivalent to
$$- k_{n}E_{\mu_{n}}(z) \rightarrow - \gamma + \int_{-\infty}^{\infty} \frac{1+tz}{t-z }d\sigma (t) = - E_{\nu^{m, \gamma,\sigma}_{\uplus}}(z)$$where the convergence is uniform on compact subsets of $\mathbb{C}^{+}$.
It follows from this fact that $k_{n}\gamma_{n} \rightarrow \gamma$
and $k_{n} \sigma_{n} \rightarrow \sigma$ where the latter is with respect to the weak topology.

We define a new family of measures $\{ \mu_{n,N} : n,N \in \mathbb{N} \}$
implicitly through the equation
$$- E_{\mu_{n,N}}(z) = F_{\mu_{n,N}}(z) - \frac{1}{\mu_{n}(\mf{R})} z  = - \gamma_{n} + \int^{N + \epsilon(n,N)}_{-(N + \delta(n,N))} \frac{1+tz}{t-z}d\sigma_{n}(t) $$
where the (small) numbers $\epsilon(n,N), \delta(n,N)$ are chosen so that
$$k_{n} \sigma_{n}|_{[-N - \delta(n,N) , N + \epsilon(n,N)]} \rightarrow \sigma|_{[-N,N]} = \widetilde{\sigma}_{N}$$
(mass may converge to $N$ so this slight correction is required).
It follows that, for each $N$, the functions $k_{n}E_{\mu_{n,N}}$ converge to $E_{\nu_{\uplus}^{m, \gamma,\widetilde{\sigma}_{N}}}$ uniformly on compact subsets of $\mathbb{C}^{+}$. Since also $\mu_{n, N}(\mf{R})^{k_n} = \mu_n(\mf{R})^{k_n} \rightarrow m$, this is equivalent to $\mu_{n,N} \uplus \cdots \uplus \mu_{n,N} \rightarrow \nu_{\uplus}^{m, \gamma, \widetilde{\sigma}_{N}}$
in the weak topology.  The reader should note that these measures have support contained in $[-N - \delta(N) , N + \epsilon(N)]$ where $\delta(N) = \sup_{n \in \mathbb{N}}\delta(n,N)$
$\epsilon(N) = \sup_{n \in \mathbb{N}}\epsilon(n,N)$ which we may assume is as close to $0$ as we would like.  Thus, the measures are compactly supported in a uniform sense
so that the hypotheses of Proposition \ref{Prop:Variance} are satisfied (as this implies the requisite uniform bound on the variance).

Consider the following inequality:
$$|F_{\mu_{n}^{\rhd {k_{n}}}}(z) - F_{\nu^{m, \gamma,\sigma}_{\rhd}}(z)|$$
$$\leq |F_{\mu_{n}}^{\circ k_{n}}(z) - F_{\mu_{n,N}}^{\circ k_{n}}(z)|  +
|F_{\mu_{n,N}^{\rhd {k_{n}}}}(z) - F_{\nu^{m, \gamma,\widetilde{\sigma}_{N}}_{\rhd}}(z)|
 + |F_{\nu^{m, \gamma,\widetilde{\sigma}_{N}}_{\rhd}}(z) - F_{\nu^{m, \gamma,\sigma}_{\rhd}}(z)|$$
where we shall refer to the terms on the right hand side of the inequality as $(1)$, $(2)$ and $(3)$, respectively.
We claim that we may make each of these terms arbitrarily small with the proper choice of $N$ and $n$ large enough.

We begin by bounding $(3)$.  Choose $K \subset \mf{C}_1^+$ compact and $\epsilon > 0$.
$F_{\nu^{m, \gamma,\sigma}_{\rhd}}(z)$ is the solution at time $1$ of the initial value problem
\[
\frac{\partial_t F_t(z)}{\partial t} + E_{\nu^{m, \gamma,\sigma}_{\uplus}}(F_t(z)) + \log(m) F_t(z) = 0 \ ; \ \ \ \ F_{0}(z) = z
\]
in $\mathbb{C}^{+}$, while $F_{\nu^{m, \gamma,\widetilde{\sigma}_{N}}_{\rhd}}$ is the solution of the corresponding system involving $E_{\nu^{m, \gamma,\widetilde{\sigma}_N}_{\uplus}}$.
By Lemma~\ref{Lemma:Distance} there exists $N_{0} \in \mathbb{N}$ such that term $(3) < \epsilon$ on $K$ for $N \geq N_0$.

In order to control terms $(1)$ and $(2)$, note that $\nu^{m, \gamma,\tilde{\sigma}_{N}}_{\rhd} \rightarrow \nu^{m, \gamma,\sigma}_{\rhd}$ as $N\uparrow \infty$ (this follows from Lemma \ref{Lemma:Distance}).
Choose a family $\{U_{N} \}_{N\in \mathbb{N}}$ such that
\begin{enumerate}
 \item $U_{N}$ is a weak neighborhood of $\nu^{m, \gamma,\sigma}_{\rhd}$
\item $\nu^{m, \gamma,\tilde{\sigma}_{N}}_{\rhd} \in U_{N}$
\item $U_{N+1} \subset U_{N}$
\item $\cap_{N=1}^{\infty}U_{N} = \{\nu^{m, \gamma,\sigma}_{\rhd} \}$
\end{enumerate}
Invoking Proposition \ref{Prop:Variance}, for every $N \in \mathbb{N}$, there exists $n(N) \in \mathbb{N}$ such that $(2) < \epsilon$
for $n \geq n(N)$.  We may further assume that $n(N)$ is chosen large enough so that $\mu_{n,N}^{\rhd k_{n}} \in U_{N}$ (the Proposition implies that these measures converge to $\nu^{m, \gamma,\tilde{\sigma}_{N}}_{\rhd} \in U_{N}$ as $n\uparrow \infty$).
Since tightness is equivalent to sequential precompactness, we have that
$\{\mu_{n,N}^{\rhd k_{n}} : \ N \in \mathbb{N}, \ n \geq n(N) \}$ forms a tight family (our neighborhoods $U_{N}$ were chosen for this purpose).
 By the same argument as in Lemma \ref{Tech-lemma}, we have that $\{ \mu_{n,N}^{\rhd j} : \ N \in \mathbb{N}, \ n \geq n(N) , \ j=1, \ldots , k_{n} \}$ forms a tight family, so that
$$C = \bigcup_{N\in \mathbb{N}} \bigcup_{n \geq n(N)} \bigcup_{j=1}^{k_{n}} F_{\mu_{n,N}}^{\circ j}(K) $$ has compact closure.  Let $M_{0}$ be the upper bound on the magnitude of the derivative of the family of
functions $\{ F_{\mu_{n}}^{\circ j} : n \in \mathbb{N}, j=1,\ldots , k_{n} \}$ on this set $C$ and $M_{1}$ denote the upper bound on the magnitude of the elements in $C$.
The upper bound $M_{0}$ exists since this family of $F$-transforms arises from a uniformly bounded, tight family of measures.  Since this implies sequential precompactness on compact sets, this family is normal, so that such an upper bound
exists for any compact set $C$.

Since $M_{0}$ and $M_{1}$ do not depend on our choice of $N$,
 we may refine our choice of $N_{0}$ so that $$ \frac{\Im{(F_{\nu_\uplus^{m, \gamma, \sigma}}(iN) - \frac{1}{m} iN)}}{N} < \frac{\epsilon}{4M_{0}M_{1}} , \ \ \ \ \ \sup_{z \in C , |t| > N} \frac{1 + tz}{t - z} < (1 + \epsilon)M_{1}$$
for all $N \geq N_{0}$ (the $t$ are real numbers).  The first estimate follows from the asymptotics of the $F$-transform.  These inequalities will play a role in bounding term $(1)$

Now fix $N \geq N_{0}$ and assume that $n \geq n(N)$ so that terms $(2)$ and $(3)$ are both bounded by $\epsilon$.
For any $z \in K$, we have the following inequality involving term $(1)$:

\begin{align*}
(1)  & = |F_{\mu_{n}}^{\circ k_{n}}(z) - F_{\mu_{n,N}}^{\circ k_{n}}(z)| \\
& \leq |F_{\mu_{n}}^{\circ k_{n} - 1}(F_{\mu_{n}}(z)) - F_{\mu_{n}}^{\circ (k_{n}-1)}(F_{\mu_{n,N}}(z))| + |F_{\mu_{n}}^{\circ (k_{n} - 1)}(F_{\mu_{n,N}}(z)) - F_{\mu_{n,N}}^{\circ (k_{n}-1)}(F_{\mu_{n,N}}(z))| \\
& \leq M_{0}|F_{\mu_{n}}(z) - F_{\mu_{n,N}}(z)| + |F_{\mu_{n}}^{\circ (k_{n} - 2)}(F_{\mu_{n}}(F_{\mu_{n,N}}(z))) - F_{\mu_{n}}^{\circ (k_{n} - 2)}(F_{\mu_{n,N}}(F_{\mu_{n,N}}(z))) | \\ & \ \ \ \ +
|F_{\mu_{n}}^{\circ (k_{n} - 2)}(F_{\mu_{n,N}}(F_{\mu_{n,N}}(z))) - F_{\mu_{n,N}}^{\circ (k_{n} - 2)}(F_{\mu_{n,N}}(F_{\mu_{n,N}}(z)))|
\end{align*}

Continuing in this way, we get the estimate
$$ (1) \leq M_{0} \sum_{j=0}^{k_{n}-1} |F_{\mu_{n,N}} \circ F_{\mu_{n,N}}^{\circ (k_{n} - j - 1)}(z) - F_{\mu_{n}} \circ F_{\mu_{n,N}}^{\circ (k_{n} - j - 1)}(z)|$$

For the key step in the estimate, observe that, for $z \in C$
\[
\abs{F_{\mu_{n,N}}(z) - F_{\mu_{n}}(z)} \leq \abs{\int_{\mathbb{R}\setminus [-N,N]} \frac{1+t z}{t- z}d\sigma_{n}(t)} \leq \sup_{z \in C, \ \abs{t} > N} \abs{\frac{1+t z}{t- z}} \sigma_n(\abs{t} > N).
\]
and note that we have already shown that the supremum has a bound of $(1 + \epsilon)M_{1}$.  Now, recalling Remark~\ref{remark} and the fact that $N \geq N_{0}$,
\[
\begin{split}
k_n \sigma_{n}(|t| > N)
& \leq \frac{2 k_n \Im{(F_{\mu_{n}}(iN) - \frac{1}{\mu_n(\mf{R})} iN)}}{N} \\
& =  \frac{2\Im{(F_{\mu_{n}^{\uplus_{k_{n}}}}(iN) - \frac{1}{\mu_n(\mf{R})^{k_n}} iN)}}{N}
\leq \frac{4 \Im(F_{\nu_\uplus^{m, \gamma, \sigma}}(i N) - \frac{1}{m} i N)}{N}.
\end{split}
\]
(Note that the last of these inequalities simply follows from the fact that $\mu_{n}^{\uplus_{k_{n}}} \rightarrow \nu_\uplus^{m, \gamma, \sigma}$ and a fundamental fact about the asymptotics of the F-transforms of convergent families of measures found in \cite{BV93}.
As such, it may be necessary to choose our $n$ larger, but this does not create any new dependence since we have fixed our $N \geq N_{0}$.)

Thus, our estimate for $(1)$ becomes
$$ (1) \leq M_{0} \sum_{j=0}^{k_{n} - 1} \frac{4 \abs{z_{j}} \Im{(F_{\nu_\uplus^{m, \gamma, \sigma}}(iN) - \frac{1}{m} iN)}}{Nk_{n}} = M_{0} \frac{4 \Im{(F_{\nu_\uplus^{\gamma, \sigma}}(iN) - \frac{1}{m} iN)}}{N} \sum_{j=0}^{k_{n} - 1} \frac{\abs{z_{j}}}{k_{n}},$$
where these $$z_{j} = F_{\mu_{n,N}}^{\circ (k_{n} - j - 1)}(z) \in C , \ \ \ n \geq n(N) , \ \ \ N \geq N_0$$ Combining our estimates, we have that
$$(1) \leq (4M_{0}(1+\epsilon)M_{1})\left(\frac{\epsilon}{4M_{0}M_{1}} \right) = \epsilon (1 + \epsilon)$$
Thus, $$|F_{\mu_{n}^{\rhd {k_{n}}}}(z) - F_{\nu^{m, \gamma,\sigma}_{\rhd}}(z)|< (3 + \epsilon)\epsilon$$  for $z \in C$ and $n \geq n(N)$. This implies
that these functions converge uniformly on compact sets which is equivalent to the fact that  $\mu_{n}^{\rhd {k_{n}}} \rightarrow \nu^{m, \gamma,\sigma}_{\rhd}$ weakly.
This completes the proof of the forward direction.

We now assume that $\mu_{n} \rhd \mu_{n} \rhd \cdots \rhd \mu_{n} \rightarrow \nu_{\rhd}^{m, \gamma,\sigma}$ and claim that this implies
$\mu_{n} \uplus \mu_{n} \uplus \cdots \uplus \mu_{n} \rightarrow \nu_{\uplus}^{m, \gamma,\sigma}$.
To see this, note that Lemma \ref{Tech-lemma} implies that, for fixed large $y > 0$ and sufficiently large $n$, we have
\begin{align*}
& k_{n}\sigma_{n}(\{|t| > y \})  \leq 2k_{n}\int_{\mathbb{R}} \frac{1+t^{2}}{y^{2} + t^{2}}d\sigma_{n}(t)
\\ & = \frac{2k_{n}}{y}\Im{(F_{\mu_{n}}(iy) - \frac{1}{\mu_n(\mf{R})} iy)}
 \leq \frac{-\log(m)}{m^{-1} - 1} \frac{4\Im{(F_{\mu_{n}}^{\circ k_{n}}(iy) - \frac{1}{\mu_n(\mf{R})^{k_n}} iy)}}{y}.
\end{align*}
Using Lemma~\ref{Lemma:Basic-facts}(c) again, the right hand side of the inequality goes to $0$ as $y\uparrow \infty$ (since this is a tight family). Also,
\[
k_n \sigma_n(\mf{R}) = k_n \Im \left(F_{\mu_n}(i) - \frac{1}{\mu_n(\mf{R})} i \right),
\]
so the first statement in Lemma~\ref{Tech-lemma}(b) implies that $k_{n}\sigma_{n}(\mathbb{R})$ is bounded over $n$. Therefore this estimate implies tightness of the family of measures $k_{n}\sigma_{n}$.

Now, let $\sigma'$ denote a weak cluster point for this family of finite measures.  We claim that $k_{n} \gamma_{n}$ is bounded along the relevant subsequence.
Indeed, note that $$\Re(F_{\mu_{n}}^{\circ k_{n}}(i)) = - k_{n} \gamma_{n} + \sum_{j=0}^{k_{n}-1} \int_{\mathbb{R}} \frac{(t - \Re(F_{\mu_{n}}^{\circ j} (i)))(1 + t\Re(F_{\mu_{n}}^{\circ j} (i))) -
 \Im(F_{\mu_{n}}^{\circ j}(i))^{2}}{|t - F_{\mu_{n}}^{\circ j}(i)|^{2}}d\sigma_{n}(t)$$
The left hand side of the equation is convergent by assumption. As we have shown that
\[
\{ \mu_{n}^{\rhd j} \}_{n \in \mathbb{N}, j=1,\ldots , k_{n}}
\]
is a tight family under these assumptions, the magnitude of the integrands on the right hand side of the equation have a uniform upper bound of $c$.
Thus, we have the inequality,
\[
\abs{\Re(F_{\mu_{n}}^{\circ k_{n}}(i)) + k_{n} \gamma_{n}} \leq  k_{n}\sigma_{n}(\mathbb{R})c
\]
which implies that $k_{n}\gamma_{n}$
is a bounded sequence.  Thus, we may additionally assume that $k_{n}\gamma_{n} \rightarrow \gamma'$ along this subsequence.

  Consider the function $$F^{\gamma',\sigma'}(z) = \frac{1}{m} z - \gamma' + \int_{\mathbb{R}}\frac{1+tz}{t-z}d\sigma'(t).$$
Then, along an appropriate subsequence, we have that $k_{n} (F_{\mu_{n}}(z) - \frac{1}{\mu_n(\mf{R})} z) \rightarrow F^{\gamma',\sigma'}(z) - \frac{1}{m} z$ uniformly on compact subsets of $\mathbb{C}^{+}$. Along with $\mu_n(\mf{R})^{k_n} \rightarrow m$, this implies that $\mu_{n}^{\uplus_{k_{n}}} \rightarrow \nu_{\uplus}^{m, \gamma',\sigma'}$ along an appropriate subsequence.
But we have just shown that this fact implies that $\mu_{n}^{\rhd k_{n}} \rightarrow \nu_{\rhd}^{m, \gamma',\sigma'}$ along this subsequence.
Since we are assuming that  $\mu_{n}^{\rhd k_{n}} \rightarrow \nu_{\rhd}^{m, \gamma,\sigma}$ we may conclude that $\gamma = \gamma'$ and $\sigma = \sigma'$.
This completes our proof.
\end{proof}

\begin{proof}[Proof of Theorem~\ref{Theorem:Main}]
Equivalence of parts (c) and (d) follows by applying Proposition~\ref{Prop:Additive} to probability measures. The remaining equivalences were proved in \cite{BerPatDomains}.
\end{proof}

\section{Limit theorems for multiplicative monotone convolution on $\mf{T}$}

\subsection{Preliminaries}

Let $\mu$ denote a probability measure on the unit circle $\mf{T}$.  We define the transforms
\[
\psi_\mu(z) = \int_{\mf{T}} \frac{z \zeta}{1 - z \zeta} \,d\mu(\zeta), \quad \eta_\mu(z) = \frac{\psi_\mu(z)}{1 + \psi_{\mu}(z)}.
\]
Note that the mean of $\mu$ is
\begin{equation}
\label{Eq:Mean}
\int_{\mathbb{T}}\zeta d\mu(\zeta) = \lim_{z \rightarrow 0} \frac{\eta_\mu(z)}{z} = \eta'(0).
\end{equation}
We will always assume that this quantity is non-zero, in which case $\eta_{\mu}^{-1}$ is defined in a neighborhood of zero and we may define a new transform
$$\Sigma_{\mu}(z) = \frac{\eta_{\mu}^{-1}(z)}{z}$$
It is immediate from the definition of $\eta_\mu$ that it takes the unit disk $\mf{D}$ to itself and $\eta_\mu(0) = 0$, so that in fact for $z \in \mf{D}$, $\abs{\eta_\mu(z)} \leq \abs{z}$. This fact is necessary in what follows as we will treat these transforms as composition operators on certain spaces of functions on
\[
\mathbb{D}_{1/2} = \set{z \in \mf{C} : \abs{z} < \frac{1}{2}}.
\]

By taking products of random variables that are freely, Boolean and monotonically independent, we may develop multiplicative forms of convolution.
We will forgo the operator algebraic definition of the convolution operations and refer to \cite{Voiculescu_Mult}, \cite{Franz_Bool_Circle} and \cite{Bercovici-Multiplicative-monotonic} for the theory relevant to
free, Boolean and monotone convolution, respectively.
Instead, given  probability measures $\mu$ and $\nu$, we define the free, Boolean and monotone multiplicative convolution operations (in symbols $\mu \boxtimes \nu$, $\mu \utimes \nu$ and $\mu \circlearrowright \nu$)
through their transforms.  That is, we define binary operations on the space of probability measures on $\mathbb{T}$ implicitly through their transforms as follows:
$$\Sigma_{\mu \boxtimes \nu}(z) = \Sigma_{\mu}(z) \Sigma_{\nu}(z), \ \ \ \frac{\eta_{\mu \sutimes \nu}(z)}{z} = \frac{\eta_{\mu}(z)}{z} \frac{\eta_{\nu}(z)}{z},  \ \ \ \eta_{\mu \circlearrowright \nu}(z) = \eta_{\mu} \circ \eta_{\nu}(z) $$
It follows from equation~\eqref{Eq:Mean} that
\[
\int_{\mathbb{T}}\zeta d(\mu \utimes \nu)(\zeta) = \int_{\mathbb{T}}\zeta d\mu(\zeta) \int_{\mathbb{T}}\zeta d\nu(\zeta) = \int_{\mathbb{T}}\zeta d(\mu \circlearrowright \nu)(\zeta).
\]
Note that there is an analogous version of multiplicative convolution for classical independence.  This binary operation is represented by the symbol $\circledast$.
We will not discuss this type of convolution directly although it figures in our results.

According to \cite{Bercovici-Multiplicative-monotonic} (see also \cite{Berkson-Porta}), the $\eta$-transforms of a multiplicative monotone convolution semigroup satisfy an equation of the form
\[
\frac{d \eta_{\mu_t}(z)}{d t} = A(\eta_{\mu_t}(z)),
\]
where the generator $A$ of the semigroup is a general function of the form $A(z) = z B(z)$, where $B$ is analytic in $\mf{D}$ and $\Re (B(z)) \leq 0$, in other words
\[
B(z) = i \beta - \int_{\mf{T}} \frac{1 + \zeta z}{1 - \zeta z} \,d\sigma(\zeta).
\]

According to \cite{Wang-Boolean}, we may identify the classes of $\circledast$, $\boxtimes$ and $\utimes$ infinitely divisible Borel probability measures on $\mathbb{T}$ with $\gamma \in \mf{T}$ and $\sigma$ a finite Borel measure on $\mf{T}$
\[
(\mc{F} \nu_\circledast^{\gamma, \sigma})(p) = \gamma^p \exp \left( \int_{\mf{T}} \frac{\zeta^p - 1 - i p \Im (\zeta)}{1 - \Re (\zeta)} \,d\sigma(\zeta) \right), \quad p \in \mf{Z},
\]
\[
\Sigma_{\nu_\boxtimes^{\gamma, \sigma}}(z) = \gamma \exp \left( \int_{\mf{T}} \frac{1 + \zeta z}{1 - \zeta z} \,d\sigma(\zeta) \right), \quad z \in \mf{D},
\]
\[
\eta_{\nu_{\sutimes}^{\gamma, \sigma}}(z) = \gamma z \exp \left( - \int_{\mf{T}} \frac{1 + \zeta z}{1 - \zeta z} \,d\sigma(\zeta) \right), \quad z \in \mf{D}.
\]

\begin{definition}
For $\beta \in \mf{R}$ and $\sigma$ as above, denote
\[
A^{\beta, \sigma}(z) = z \left(i \beta - \int_{\mf{T}} \frac{1 + \zeta z}{1 - \zeta z} \,d\sigma(\zeta) \right),  \quad z \in \mf{D}.
\]
Note that if $\gamma = e^{i \beta}$, then
\[
\exp \left( \frac{1}{z} A^{\beta, \sigma}(z) \right) = \frac{1}{z} \eta_{\nu_{\sutimes}^{\gamma, \sigma}}(z).
\]
Let $\set{\nu_t : t \geq 0}$ be the multiplicative monotone convolution semigroup $A^{\beta, \sigma}$ generates, and denote $\nu_{\circlearrowright}^{\beta, \sigma} = \nu_1$.
\end{definition}

\begin{remark}
\label{Remark:Model}
According to \cite{Guryainov} (see \cite{CM95} for the background), any multiplicative monotone convolution semigroup has the form
\[
\eta_{\nu_t}(z) = u^{-1} (r^t e^{i \theta t} u(z))
\]
for $0 < r \leq 1$ and $\theta \in \mf{R}$. Here $r^t e^{i \theta t}$ is the mean of $\nu_t$, and the generator of the semigroup is
\[
A(z) = (\log r + i \theta) \frac{u(z)}{u'(z)},
\]
which thus specifies the conditions on the function $u$. $\nu_1$ determines $r$ uniquely, $u$ up to a multiplicative constant, and $\theta$ up to an additive multiple of $2 \pi$. It follows that if $\nu^{\beta_1, \sigma}_\circlearrowright = \nu^{\beta_2, \sigma}_\circlearrowright$, then
\[
2 \pi i k \frac{u(z)}{u'(z)} = z i (\beta_1 - \beta_2),
\]
and the two generators are constant multiples of $z$. Such functions generate monotone convolution semigroups of delta measures. In all other cases, $\nu^{\beta_1, \sigma}_\circlearrowright \neq \nu^{\beta_2, \sigma}_\circlearrowright$. In particular, the reasoning behind the non-uniqueness arguments in the last section of \cite{Bercovici-Multiplicative-monotonic} does not hold. Nevertheless, as Hari Bercovici has pointed out to us, Theorem 4.4 and Proposition 4.5 (and their proofs) in that section are correct: any $\circlearrowright$-infinitely divisible distribution can be included in a $\circlearrowright$-convolution semigroup, and specifying the means determines the $\circlearrowright$-square root uniquely.
\end{remark}

\begin{lemma}
Both $\nu_{\sutimes}^{\gamma, \sigma}$ and $\nu_{\circlearrowright}^{\beta, \sigma}$, where $\gamma = e^{i \beta}$, have mean $\gamma e^{-\sigma(\mf{T})}$.
\end{lemma}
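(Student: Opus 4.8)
The plan is to read both means off directly from equation~\eqref{Eq:Mean}, which identifies the mean of a probability measure $\mu$ on $\mf{T}$ with $\eta_\mu'(0)=\lim_{z\to 0}\eta_\mu(z)/z$. For $\nu_{\sutimes}^{\gamma,\sigma}$ this is essentially a one-line computation: dividing the explicit formula for $\eta_{\nu_{\sutimes}^{\gamma,\sigma}}$ by $z$ gives
\[
\frac{\eta_{\nu_{\sutimes}^{\gamma,\sigma}}(z)}{z}=\gamma\exp\left(-\int_{\mf{T}}\frac{1+\zeta z}{1-\zeta z}\,d\sigma(\zeta)\right),
\]
and letting $z\to 0$ the integrand tends to $1$ boundedly (on a fixed disk $\mathbb{D}_{1/2}$ around the origin one has $\abs{1-\zeta z}\geq\tfrac12$ for $\zeta\in\mf{T}$, so $\abs{\tfrac{1+\zeta z}{1-\zeta z}}\leq 3$), whence the exponent converges to $-\sigma(\mf{T})$ and the mean equals $\gamma e^{-\sigma(\mf{T})}$.

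For $\nu_{\circlearrowright}^{\beta,\sigma}=\nu_1$ I would argue via the generating differential equation $\frac{d}{dt}\eta_{\nu_t}(z)=A^{\beta,\sigma}(\eta_{\nu_t}(z))$, $\eta_{\nu_0}(z)=z$. Writing $m(t)=\eta_{\nu_t}'(0)$ for the mean of $\nu_t$ and using that every $\eta_{\nu_t}$ fixes $0$, differentiate the equation in $z$ and evaluate at $z=0$; interchanging the $t$- and $z$-derivatives yields $m'(t)=(A^{\beta,\sigma})'(0)\,m(t)$. Since $A^{\beta,\sigma}(w)=w\bigl(i\beta-\int_{\mf{T}}\frac{1+\zeta w}{1-\zeta w}\,d\sigma(\zeta)\bigr)$ vanishes at $w=0$ and has derivative there equal to $i\beta-\sigma(\mf{T})$, and $m(0)=1$, we get $m(t)=e^{(i\beta-\sigma(\mf{T}))t}$, so $m(1)=e^{i\beta}e^{-\sigma(\mf{T})}=\gamma e^{-\sigma(\mf{T})}$. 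Alternatively one may invoke Remark~\ref{Remark:Model}: writing $A^{\beta,\sigma}(z)=(i\beta-\sigma(\mf{T}))z+O(z^2)$ and comparing with $A(z)=(\log r+i\theta)\frac{u(z)}{u'(z)}=(\log r+i\theta)z+O(z^2)$ forces $\log r=-\sigma(\mf{T})$ and $\theta=\beta$, and the mean $r^te^{i\theta t}$ of $\nu_t$ at $t=1$ is again $\gamma e^{-\sigma(\mf{T})}$.

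I do not expect a genuine obstacle here: the only points needing a word of justification are the passage of the limit inside the integral in the first part and the differentiation under the flow in the second, both handled by the uniform bound on $\frac{1+\zeta z}{1-\zeta z}$ over $\zeta\in\mf{T}$ and $z$ in a fixed small disk, together with smooth dependence of $\eta_{\nu_t}(z)$ on $(t,z)$. The content of the lemma is really the reconciliation of the two one-parameter families under the identification $\gamma=e^{i\beta}$, which is exactly what the computation above exhibits.
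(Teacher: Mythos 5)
Your argument is correct and follows essentially the same route as the paper: the first mean is read directly off the formula for $\eta_{\nu_{\sutimes}^{\gamma,\sigma}}$, and the second is obtained by differentiating the flow equation at $z=0$ to get $m'(t)=(A^{\beta,\sigma})'(0)\,m(t)$ with $m(0)=1$. The extra justifications you supply (the uniform bound on $\frac{1+\zeta z}{1-\zeta z}$ and the alternative via Remark~\ref{Remark:Model}) are fine but not needed beyond what the paper's proof already does.
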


\begin{proof}
The first statement is clear directly from the formula. For the second, since
\[
\frac{d \eta_{\nu_t}(z)}{d t} = A^{\beta, \sigma}(\eta_{\nu_t}(z)),
\]
we have
\[
\frac{d \eta_{\nu_t}'(0)}{d t} = (A^{\beta, \sigma})'(\eta_{\nu_t}(0)) \eta_{\nu_t}'(0)
= (A^{\beta, \sigma})'(0) \eta_{\nu_t}'(0)
= (i \beta - \sigma(\mf{T})) \eta_{\nu_t}'(0).
\]
It follows that
\[
\eta_{\nu_t}'(0) = \exp((i \beta - \sigma(\mf{T}))t) = e^{i \beta t} e^{-\sigma(\mf{T}) t}.
\]
The result follows.
\end{proof}

\subsection{Main Results}

The following theorem is a restriction of the results from \cite{Bercovici-Wang-Multiplicative} and \cite{Wang-Boolean} from infinitesimal arrays to sequences of measures. See these references also for results concerning the classical multiplicative convolution $\circledast$, as well as the special case of convergence to the Haar measure.

\begin{theorem}
Fix a finite positive Borel measure $\sigma$ on $\mathbb{T}$, a complex number $\gamma \in \mf{T}$, a sequence of probability measures $\set{\mu_{n}}_{n \in \mf{N}}$ on $\mf{T}$ converging to $\delta_1$ weakly, and a sequence of positive integers
$k_{1} < k_{2} < \cdots $ The following assertions are equivalent:
\begin{enumerate}
 \item
The sequence $\underbrace{\mu_{n} \boxtimes \mu_{n} \boxtimes \cdots \boxtimes \mu_{n}}_{k_{n}}$ converges weakly to $\nu_{\boxtimes}^{\gamma,\sigma}$;
 \item
The sequence $\underbrace{\mu_{n} \utimes \mu_{n} \utimes \cdots \utimes \mu_{n}}_{k_{n}}$ converges weakly to $\nu_{\sutimes}^{\gamma,\sigma}$;
\item
\[
k_n (1 - \Re (\zeta)) \,d\mu_n(\zeta) \rightarrow \sigma
\]
and
\[
\exp \left(i k_n \int_{\mf{T}} \Im (\zeta) \,d\mu_n(\zeta) \right) \rightarrow \gamma.
\]
\end{enumerate}
\end{theorem}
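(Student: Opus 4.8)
The plan is to deduce the theorem from the infinitesimal-array versions of these limit laws established in \cite{Bercovici-Wang-Multiplicative} and \cite{Wang-Boolean}. Given the sequence $\set{\mu_n}$ with $\mu_n \rightarrow \delta_1$ weakly and the integers $k_1 < k_2 < \cdots$, form the triangular array $\set{\mu_{n,j} : n \in \mf{N},\ 1 \leq j \leq k_n}$ with $\mu_{n,j} = \mu_n$ for every $j$. This array is infinitesimal: for each $\eps > 0$,
\[
\max_{1 \leq j \leq k_n} \mu_{n,j}\bigl(\set{\zeta \in \mf{T} : \abs{\zeta - 1} \geq \eps}\bigr) = \mu_n\bigl(\set{\abs{\zeta - 1} \geq \eps}\bigr) \longrightarrow 0
\]
as $n \uparrow \infty$, because $\mu_n \rightarrow \delta_1$ weakly. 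For an array with identical rows the array data governing the limit laws in the cited works collapse to $\sum_{j=1}^{k_n}(1 - \Re(\zeta))\,d\mu_{n,j}(\zeta) = k_n(1 - \Re(\zeta))\,d\mu_n(\zeta)$ and $\exp\bigl(i\sum_{j=1}^{k_n}\int_{\mf{T}}\Im(\zeta)\,d\mu_{n,j}(\zeta)\bigr) = \exp\bigl(i k_n\int_{\mf{T}}\Im(\zeta)\,d\mu_n(\zeta)\bigr)$, so condition (c) is exactly the hypothesis appearing in those theorems. Then $(a)\Leftrightarrow(c)$ is the free multiplicative statement, $(b)\Leftrightarrow(c)$ is the Boolean multiplicative statement, and $(a)\Leftrightarrow(b)$ follows by transitivity. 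The only remaining point is to reconcile our normalizations of $\psi_\mu$, $\eta_\mu$, $\Sigma_\mu$ and of the reference measures $\nu_\boxtimes^{\gamma,\sigma}$, $\nu_\sutimes^{\gamma,\sigma}$ with those of the cited sources, which is routine bookkeeping.

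To exhibit the underlying mechanism, and its analogy with the additive case of Section~3, note that $\mu_n \rightarrow \delta_1$ weakly forces $\psi_{\mu_n} \rightarrow \psi_{\delta_1}$, hence $\eta_{\mu_n}(z)/z \rightarrow 1$ and, since the means $\eta_{\mu_n}'(0) \rightarrow 1 \neq 0$, also $\Sigma_{\mu_n}(z) \rightarrow 1$, all uniformly on $\mathbb{D}_{1/2}$. Using $\eta_{\mu_n^{\sutimes k_n}}(z)/z = (\eta_{\mu_n}(z)/z)^{k_n}$ and $\Sigma_{\mu_n^{\boxtimes k_n}}(z) = \Sigma_{\mu_n}(z)^{k_n}$, statement (b) is equivalent to locally uniform convergence of $(\eta_{\mu_n}(z)/z)^{k_n}$ to $\eta_{\nu_{\sutimes}^{\gamma,\sigma}}(z)/z$, and statement (a) to that of $\Sigma_{\mu_n}(z)^{k_n}$ to $\Sigma_{\nu_{\boxtimes}^{\gamma,\sigma}}(z)$. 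The first-order relation between an analytic germ fixing $0$ and its compositional inverse gives $\Sigma_{\mu}(z) = \bigl(\eta_{\mu}(z)/z\bigr)^{-1} + O\bigl(\norm{\eta_\mu(\cdot)/\cdot - 1}_\infty^2\bigr)$ on $\mathbb{D}_{1/2}$; raising to the $k_n$-th power, the error is killed because $\norm{\eta_{\mu_n}(\cdot)/\cdot - 1}_\infty = O(1/k_n)$ whenever either side converges, and one obtains $(a)\Leftrightarrow(b)$ (the two limit laws being the $\sutimes$- and $\boxtimes$-infinitely divisible distributions attached to the same data, once the normalization of $\Sigma_\mu$ is aligned with that of the cited sources).

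Finally, to match these transform limits with condition (c) one repeats, over the compact circle, the scheme of Proposition~\ref{Prop:Additive} and Lemma~\ref{Tech-lemma}. Peeling the $\delta_1$-part off $\psi_{\mu_n}$ via $\frac{z\zeta}{1-z\zeta} = \frac{z}{1-z} + \frac{z(\zeta-1)}{(1-z)(1-z\zeta)}$ and passing to $\eta$, one sees that $(\eta_{\mu_n}(z)/z)^{k_n}$ converges locally uniformly near $0$ to a nonvanishing analytic function precisely when the finite measures $k_n(1-\Re(\zeta))\,d\mu_n(\zeta)$ are tight and converge to some $\sigma$, the imaginary parts entering only through $\exp\bigl(i k_n\int_{\mf{T}}\Im(\zeta)\,d\mu_n(\zeta)\bigr)$; the limit is then the Herglotz-type function $\gamma\exp(-\int_{\mf{T}}\frac{1+\zeta z}{1-\zeta z}\,d\sigma(\zeta))$, whose integral term recovers $\sigma$ and whose constant recovers $\gamma$. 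I expect this identification to be the main obstacle: as in Proposition~\ref{Prop:Additive}, one must run a tightness and cluster-point argument for $k_n(1-\Re(\zeta))\,d\mu_n$ and then rule out distinct cluster points by distinctness of the resulting limit laws, while keeping track of the fact that the argument of the mean $\bigl(\int_{\mf{T}}\zeta\,d\mu_n(\zeta)\bigr)^{k_n}$ is determined only modulo $2\pi$ — which is exactly why (c) is phrased through $\exp\bigl(i k_n\int_{\mf{T}}\Im(\zeta)\,d\mu_n(\zeta)\bigr) \to \gamma$ rather than through convergence of $k_n\int_{\mf{T}}\Im(\zeta)\,d\mu_n(\zeta)$ itself. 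The accompanying uniform estimates are harmless since $1 - z\zeta$ stays bounded away from $0$ for $z \in \mathbb{D}_{1/2}$ and $\zeta$ in a fixed small arc about $1$; in any event the cleanest complete argument is to quote \cite{Bercovici-Wang-Multiplicative} and \cite{Wang-Boolean} after the infinitesimality reduction of the first paragraph.
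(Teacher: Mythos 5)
Your proposal is correct and takes essentially the same route as the paper: the paper's entire proof is a single observation that the theorem is the restriction of the infinitesimal-array results of Bercovici--Wang and Wang to identical-row arrays, and your first paragraph says exactly this, with the requisite infinitesimality check supplied by $\mu_n \rightarrow \delta_1$ weakly. The additional sketches in your second and third paragraphs are explanatory overhead that the paper itself does not give.
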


\begin{remark}
A straightforward inclusion of the multiplicative monotone convolution into the preceding theorem does not hold. Indeed, let $\eta(z) = u^{-1}(r e^{i \theta} u(z))$, where $0 < r < 1$. Conditions on $u$ for $\eta$ to be an $\eta$-transform are known, see Remark~\ref{Remark:Model} (for example, we could take $u(z) = 1 - z^{-(k-1)}$ and $r e^{i \theta} = e^{\alpha (k-1)}$). Then
\[
\eta^{\circ (1/k_n)}(z) = u^{-1}(r^{1/k_n} e^{i \theta/k_n} e^{2 \pi i \ell_n/k_n} u(z)).
\]
For large $k_n$, and assuming that $\ell_n/k_n \rightarrow 0$,
\[
\eta^{\circ (1/k_n)}(z) \approx z + \left( r^{1/k_n} e^{i \theta/k_n} e^{2 \pi i \ell_n/k_n} - 1 \right) \frac{u(z)}{u'(z)}.
\]
Therefore
\[
\left( \frac{1}{z} \eta^{\circ (1/k_n)}(z) \right)^{k_n} \approx \exp \left( k_n \left( r^{1/k_n} e^{i \theta/k_n} e^{2 \pi i \ell_n/k_n} - 1 \right) \frac{u(z)}{z u'(z)} \right)
\approx \exp \left( (i \theta + 2 \pi i \ell_n) \frac{u(z)}{z u'(z)} \right),
\]
which depends on the choice of $\ell_n$. So the $k_n$'th Boolean power of the $k_n$'s monotone root need not have a limit.
\end{remark}

\begin{proposition}\label{Multiplicative_Proposition}
Suppose the sequence $\underbrace{\mu_{n} \utimes \mu_{n} \utimes \cdots \utimes \mu_{n}}_{k_{n}}$ converges weakly to $\nu_{\sutimes}^{\gamma,\sigma}$. Then for any $\beta \in \mf{R}$ with $\gamma = e^{i \beta}$, there exist $\lambda_n \in \mf{T}$, $\lambda_n^{k_n} = 1$ such that for $\tilde{\mu}_n = \delta_{\lambda_n} \circlearrowright \mu_{n}$,
the sequence $\underbrace{\tilde{\mu}_{n} \circlearrowright \tilde{\mu}_{n} \circlearrowright \cdots \circlearrowright \tilde{\mu}_{n}}_{k_{n}}$ converges weakly to $\nu_{\circlearrowright}^{\beta,\sigma}$
\end{proposition}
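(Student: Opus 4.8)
The plan is to transfer the Chernoff-product argument of Propositions~\ref{Prop:Variance} and \ref{Prop:Additive} to the circle, replacing the $F$-transform composition operators by $\eta$-transform composition operators on a space of analytic functions on $\overline{\mathbb{D}_{1/2}}$. Iterating $\eta_{\mu\sutimes\nu}(z)/z=(\eta_\mu(z)/z)(\eta_\nu(z)/z)$ shows $\eta_{\mu_n^{\utimes k_n}}(z)/z=(\eta_{\mu_n}(z)/z)^{k_n}$, so, since weak convergence of probability measures on $\mf{T}$ is equivalent to locally uniform convergence of their $\eta$-transforms on $\mf{D}$, the hypothesis reads
\[
\left(\frac{\eta_{\mu_n}(z)}{z}\right)^{k_n}\longrightarrow \frac{\eta_{\nu_{\sutimes}^{\gamma,\sigma}}(z)}{z}=\exp\!\left(B^{\beta,\sigma}(z)\right)\quad\text{uniformly on compacts of }\mf{D},
\]
where $B^{\beta,\sigma}(z):=\tfrac1z A^{\beta,\sigma}(z)=i\beta-\int_{\mf{T}}\tfrac{1+\zeta z}{1-\zeta z}\,d\sigma(\zeta)$ has $\Re B^{\beta,\sigma}\le 0$. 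If $\lambda^{k_n}=1$ then $\delta_\lambda\circlearrowright\mu_n=\delta_\lambda\utimes\mu_n$ has $\eta$-transform $\lambda\,\eta_{\mu_n}$, so $(\delta_\lambda\utimes\mu_n)^{\utimes k_n}=\mu_n^{\utimes k_n}$: twisting by such a $\lambda$ leaves the hypothesis untouched while changing $\eta_{\cdot}^{\circ k_n}$ nontrivially. Writing $a_n=\int_{\mf{T}}\zeta\,d\mu_n$ for the mean, we get $a_n^{k_n}\to e^{i\beta}e^{-\sigma(\mf{T})}$, hence $|a_n|^{k_n}\to e^{-\sigma(\mf{T})}>0$ and $|a_n|\to 1$ (as $k_n\to\infty$), and $(a_n/|a_n|)^{k_n}\to e^{i\beta}$.

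I would choose $\lambda_n$ in two steps. First let $\lambda_n^{(0)}$ be the unique $k_n$-th root of unity with $\arg(\lambda_n^{(0)}a_n)\in(-\pi/k_n,\pi/k_n]$ (such exists since the $k_n$-th roots of unity are $\tfrac{2\pi}{k_n}$-spaced). Then $k_n\arg(\lambda_n^{(0)}a_n)\in(-\pi,\pi]$ is bounded and, being $\equiv k_n\arg a_n\pmod{2\pi}$, tends to $\beta$ modulo $2\pi$; so the nearest integer $l_n$ to $\tfrac1{2\pi}\!\left(\beta-k_n\arg(\lambda_n^{(0)}a_n)\right)$ is bounded and $k_n\arg(\lambda_n^{(0)}a_n)+2\pi l_n\to\beta$. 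Put $\lambda_n=\lambda_n^{(0)}e^{2\pi i l_n/k_n}$ (still $\lambda_n^{k_n}=1$) and $\tilde\mu_n=\delta_{\lambda_n}\circlearrowright\mu_n$; its mean $\lambda_n a_n$ satisfies $|\lambda_n a_n|\to 1$ and, for $n$ large, $\arg(\lambda_n a_n)=\arg(\lambda_n^{(0)}a_n)+2\pi l_n/k_n$, so $k_n\arg(\lambda_n a_n)\to\beta$ and $\lambda_n a_n\to 1$.

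With $\tilde f_n(z)=\eta_{\tilde\mu_n}(z)/z$ we have $|\tilde f_n|\le 1$ on $\mf{D}$ and $\tilde f_n(0)=\lambda_n a_n\to 1$, so by normality and the maximum principle $\tilde f_n\to 1$ uniformly on $\overline{\mathbb{D}_{1/2}}$. For $n$ large set $\psi_n=k_n\log\tilde f_n$ (principal branch). Then $e^{\psi_n}=\tilde f_n^{k_n}=(\eta_{\mu_n}(z)/z)^{k_n}\to e^{B^{\beta,\sigma}}$ uniformly on $\overline{\mathbb{D}_{1/2}}$, and since $e^{B^{\beta,\sigma}}$ is bounded away from $0$ there, $\exp(\psi_n-B^{\beta,\sigma})\to 1$ uniformly; by connectedness $\psi_n-B^{\beta,\sigma}-2\pi i m_n\to 0$ uniformly for some integers $m_n$, and evaluating at $0$, $\psi_n(0)=k_n\log|a_n|+i\,k_n\arg(\lambda_n a_n)\to-\sigma(\mf{T})+i\beta=B^{\beta,\sigma}(0)$, which forces $m_n=0$ eventually. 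Hence $\psi_n\to B^{\beta,\sigma}$ uniformly, and
\[
k_n\!\left(\eta_{\tilde\mu_n}(z)-z\right)=k_n z\!\left(e^{\psi_n(z)/k_n}-1\right)\longrightarrow z\,B^{\beta,\sigma}(z)=A^{\beta,\sigma}(z)
\]
uniformly on $\overline{\mathbb{D}_{1/2}}$. Now let $\mc{D}$ be the functions analytic on a neighbourhood of $\overline{\mathbb{D}_{1/2}}$ and $\mc{A}$ its uniform closure; the operators $V_n h=h\circ\eta_{\tilde\mu_n}$ are contractions of $\mc{A}$ (as $\eta_{\tilde\mu_n}$ maps $\overline{\mathbb{D}_{1/2}}$ into itself), $B h=A^{\beta,\sigma}h'$ generates the strongly continuous contraction semigroup $T(t)h=h\circ\eta_{\nu_t}$ with $\{\nu_t\}$ the monotone semigroup generated by $A^{\beta,\sigma}$ (composition-semigroup theory, \cite{Berkson-Porta,Siskakis-review}), and $\mc{D}$ is a $T(t)$-invariant, dense subset of $\mathrm{dom}(B)$, hence a core. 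A Taylor expansion together with the display above and $\eta_{\tilde\mu_n}(z)-z=O(1/k_n)$ gives $k_n(V_n-I)h\to Bh$ on $\mc{D}$, so Proposition~\ref{Prop:Chernoff} yields $V_n^{k_n}h\to T(1)h=h\circ\eta_{\nu_{\circlearrowright}^{\beta,\sigma}}$ for all $h\in\mc{A}$. Taking $h(z)=z$ gives $\eta_{\tilde\mu_n^{\circlearrowright k_n}}\to\eta_{\nu_{\circlearrowright}^{\beta,\sigma}}$ uniformly on $\mathbb{D}_{1/2}$, hence (by normality of $\eta$-transforms on $\mf{D}$) locally uniformly on $\mf{D}$, i.e. $\tilde\mu_n^{\circlearrowright k_n}\to\nu_{\circlearrowright}^{\beta,\sigma}$ weakly.

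The main obstacle is the winding-number bookkeeping in the two middle paragraphs. The Boolean hypothesis pins down only $\gamma=e^{i\beta}$, whereas the monotone limit $\nu_{\circlearrowright}^{\beta,\sigma}$ genuinely depends on the lift $\beta$ (Remark~\ref{Remark:Model}); reconciling these is precisely what the twist $\lambda_n$ must accomplish, and one must verify that a single $k_n$-th root of unity can simultaneously drag $\int\zeta\,d\mu_n$ to within $O(1/k_n)$ of $1$ (so that $\tilde f_n\to 1$ and $\log\tilde f_n$ is even defined) and select the correct branch so that $\psi_n\to B^{\beta,\sigma}$ rather than $B^{\beta,\sigma}+2\pi i m$. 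Checking that $\mc{D}$ is a core and that $B$ is the stated generator is routine given the cited literature, exactly as in the additive case.
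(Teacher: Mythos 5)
Your proof is correct and follows the same overall strategy as the paper: derive $k_n(\eta_{\tilde\mu_n}(z)-z)\to A^{\beta,\sigma}(z)$ uniformly on $\overline{\mathbb{D}_{1/2}}$ and then apply the Chernoff product formula (Proposition~\ref{Prop:Chernoff}) to $\eta$-composition operators. The two places where you diverge from the paper, both to good effect, are worth noting. First, the paper asserts from the outset that $\mu_n\to\delta_1$ (inheriting this hypothesis from the preceding theorem) and uses this to make sense of $\log(\eta_{\mu_n}(z)/z)$ and to observe the existence of a suitable integer sequence $\ell_n$; you instead extract $|a_n|\to1$ from $a_n^{k_n}\to\gamma e^{-\sigma(\mf{T})}$, build $\lambda_n$ explicitly so that $k_n\arg(\lambda_n a_n)\to\beta$, and only then conclude $\tilde f_n\to1$ by normality and the maximum principle --- so you do not actually need $\mu_n\to\delta_1$ as a standing assumption, and the winding-number bookkeeping (which the paper handles rather tersely via equation~\eqref{Eq:Rotation}) is made fully concrete. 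Second, you take $\mc{D}$ to be all functions analytic on a neighborhood of $\overline{\mathbb{D}_{1/2}}$ and prove $k_n(V_n-I)h\to Bh$ by a Taylor expansion of $h$, while the paper takes $\mc{D}=\{\psi_\rho:\rho\text{ finite on }\mf{T}\}$ and carries out the explicit integral-kernel estimate as in Proposition~\ref{Prop:Variance}; either domain is $T(t)$-invariant and dense, hence a core, and both verifications are routine. One small bonus of your $\mc{D}$: $h(z)=z$ genuinely lies in it, whereas it is not a $\psi$-transform, so your final ``take $h(z)=z$'' step is cleaner than the paper's.
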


\begin{proof}
To prove this proposition, first note that our hypotheses imply that
\[
\left( \frac{1}{z} \eta_{\mu_n}(z) \right)^{k_n}
\rightarrow \frac{1}{z} \eta^{\gamma, \sigma}(z)
= \gamma \exp \left( - \int_{\mf{T}} \frac{1 + \zeta z}{1 - \zeta z} \,d\sigma(\zeta) \right).
\]
and $\mu_n \rightarrow \delta_1$ weakly so that $\frac{1}{z} \eta_{\mu_n}(z) \rightarrow 1$ on $\mf{D}_{1/2}$. We may take the logarithm of both sides of the above limit,
but there is ambiguity as to the branches of the logarithm.  Fix $\beta$ with $e^{i \beta} = \gamma$. We may conclude that there exists a sequence $\{ \ell_{n} \}_{n\uparrow \infty}$ of integers so that
\begin{equation}
\label{Eq:Rotation}
\left| k_{n}\left[ \log{\left( \frac{1}{z} \eta_{\mu_n}(z)  \right)} + \frac{2 \pi i \ell_{n}}{k_{n}} \right] - \left(i \beta - \int_{\mf{T}} \frac{1 + \zeta z}{1 - \zeta z} \right) \right|  \rightarrow 0
\end{equation}
as $n\uparrow \infty$ where we have fixed the branch of the logarithm with $\Im{(\log(1))} = 0$.
The fact that $ \log{\left( \frac{1}{z} \eta_{\mu_n}(z)  \right)} \rightarrow 0$ implies that the same must be true of $\ell_{n}/k_{n}$.

With these considerations in mind, we introduce the following correction.  Let $\lambda_{n} = e^{2\pi i \ell_{n} / k_{n}}$.  Define $\tilde{\mu}_{n} = \delta_{\lambda_{n}} \circlearrowright \mu_{n} $.
Observe that

$$\left| k_{n} \log{\left( \frac{1}{z} \eta_{\tilde{\mu}_n}(z)  \right)}  - \left(i \beta - \int_{\mf{T}} \frac{1 + \zeta z}{1 - \zeta z} \right)  \right| =
 \left| k_{n} \log{\left( \frac{1}{z} \eta_{(z)}  \right)} + 2\pi \ell_{n}  - \left(i \beta - \int_{\mf{T}} \frac{1 + \zeta z}{1 - \zeta z} \right)  \right|$$
which converges to $0$ uniformly over $\mathbb{D}_{1/2}$.

We next claim that, as $n \uparrow \infty$, $$k_{n} \left| \log \left( \frac{1}{z} \eta_{\tilde{\mu}_n}(z) \right) - \left( \frac{1}{z} \eta_{\tilde{\mu}_n}(z) - 1 \right) \right| \rightarrow 0$$
Indeed, since $\frac{1}{z} \eta_{\tilde{\mu}_n}(z) \rightarrow 1$ (this holds since this limit is true for $\mu_{n}$ and $\lambda_{n} \rightarrow 1$) we may take a series expansion for the logarithm centered at $1$ so that our quantity becomes
$$k_{n} \left|\sum_{p=1}^{\infty} \frac{(1-\frac{1}{z} \eta_{\tilde{\mu}_{n}}(z))^{p} }{p} - \left(1- \frac{1}{z} \eta_{\tilde{\mu}_{n}}(z) \right) \right| = k_{n} \left|\sum_{p=2}^{\infty} \frac{(1-\frac{1}{z} \eta_{\tilde{\mu}_{n}}(z))^{p} }{p} \right| $$
Now, observe that $k_{n}|(1-\frac{1}{z} \eta_{\tilde{\mu}_{n}}(z))|$ is bounded.  Indeed, appealing to the series expansion of the logarithm;
$$|k_{n}\log(\frac{1}{z} \eta_{\tilde{\mu}_{n}}(z))| = |k_{n}(1-\frac{1}{z} \eta_{\tilde{\mu}_{n}}(z)) \sum_{j=1}^{\infty} (1-\frac{1}{z} \eta_{\tilde{\mu}_{n}}(z))^{j-1}/j | \geq |k_{n}(1-\frac{1}{z} \eta_{\tilde{\mu}_{n}}(z))|/2 $$
for n large enough.  Since the left hand side is bounded (it converges to $i \beta - \int_{\mf{T}} \frac{1 + \zeta z}{1 - \zeta z}$ by the previous paragraphs calculation), we have that $k_{n}|(1-\frac{1}{z} \eta_{\tilde{\mu}_{n}}(z))|$ is bounded.
 These facts imply our claim.

Thus, we may conclude that
\[
\lim_{n \uparrow \infty} k_n \left( \frac{1}{z} \eta_{\tilde{\mu}_n}(z) - 1 \right)
= \lim_{n \uparrow \infty} k_n \log \left( \frac{1}{z} \eta_{\tilde{\mu}_{n}}(z) \right)
\rightarrow i \beta - \int_{\mf{T}} \frac{1 + \zeta z}{1 - \zeta z} \,d\sigma(\zeta)
\]
and finally
\[
k_n \left( \eta_{\tilde{\mu}_n}(z) - z \right) \rightarrow A^{\beta, \sigma}(z).
\]

Let
\[
\mc{D} = \set{\psi_{\rho}: \rho \text{ finite Borel measure on } \mf{T}}.
\]
$\mc{D}$ is invariant under composition operators by functions $\eta_\mu$, and these operators are all contractions on it. Let $\mc{A}$ be the completion of $\mc{D}$ with respect to the uniform norm on $\mf{D}_{1/2}$, which we denote by $\norm{\cdot}_\infty$. Then each composition operator by $\eta_\mu$ as above is a contraction on $\mc{A}$. For $h(z) = \psi_\rho \in \mc{D}$,
\[
\begin{split}
& \norm{k_n(h \circ \eta_{\tilde{\mu}_{n}} - h) - A^{\beta, \sigma} h'}_\infty \\
&\qquad = \sup_{z \in \mf{D}_{1/2}} \abs{\int_{\mf{T}} \left(k_n \left( \frac{\eta_{\tilde{\mu}_{n}}(z) \zeta}{1 - \eta_{\tilde{\mu}_{n}}(z) \zeta} - \frac{z \zeta}{1 - z \zeta} \right) - \frac{A^{\beta, \sigma}(z) \zeta}{(1 - z \zeta)^2} \right) \,d\rho(\zeta)} \\
&\qquad = \sup_{z \in \mf{D}_{1/2}} \abs{\int_{\mf{T}} \left(\frac{(k_n(\eta_{\tilde{\mu}_{n}}(z) - z) - A^{\beta, \sigma}(z)) \zeta}{(1 - \eta_{\tilde{\mu}_{n}}(z) \zeta) (1 - z \zeta)} + \frac{(\eta_{\tilde{\mu}_{n}}(z) - z) A^{\beta, \sigma}(z) \zeta}{(1 - \eta_{\tilde{\mu}_{n}}(z) \zeta)(1 - z \zeta)^2} \right) \,d\rho(\zeta)} \\
&\qquad \leq 4 \rho(\mf{T}) \sup_{z \in \mf{D}_{1/2}} \abs{k_n(\eta_{\tilde{\mu}_{n}}(z) - z) - A^{\beta, \sigma}(z)}
+ 8 \rho(\mf{T}) \sup_{z \in \mf{D}_{1/2}} \abs{(\eta_{\tilde{\mu}_{n}}(z) - z) A^{\beta, \sigma}(z)}.
\end{split}
\]
On $\mf{D}_{1/2}$, $A^{\beta, \sigma}$ is bounded, and $\left(k_n(\eta_{\tilde{\mu}_{n}}(z) - z) - A^{\beta, \sigma}(z)\right)$ and $(\eta_{\tilde{\mu}_{n}}(z) - z)$ converge to zero uniformly. It follows that $\norm{k_n(h \circ \eta_{\tilde{\mu}_{n}} - h) - A^{\beta, \sigma} h'}_\infty \rightarrow 0$.

Denote by $V_n, B$ the operators on $\mc{A}$ given by
\[
V_n h = h \circ \eta_{\tilde{\mu}_{n}},
\]
and
\[
B h = A^{\beta, \sigma} h'.
\]
Then we have just shown that
\[
k_n (V_n - I) h \rightarrow B h
\]
for $h \in \mc{D}$, so in particular $\mc{D}$ is in the domain of $B$. The rest of the argument proceeds as in Proposition~\ref{Prop:Variance}, and implies (by taking $h(z) = z = \eta_{\delta_1}$) that
\[
\norm{\eta_{\tilde{\mu}_{n}^{\circlearrowright k_n}} - \eta_{\nu_{\circlearrowright}^{\gamma, \sigma}}}_\infty \rightarrow 0.
\]
We conclude that
\[
\tilde{\mu}_{n}^{\circlearrowright k_n} \rightarrow \nu_{\circlearrowright}^{\beta, \sigma}
\]
weakly.
\end{proof}

\begin{theorem}
Fix a sequence of probability measures $\set{\mu_{n}}_{n \in \mf{N}}$ on $\mf{T}$, and a sequence of positive integers
$k_{1} < k_{2} < \cdots $. Assume that
\begin{equation}
\label{Eq:beta}
k_n \int_{\mf{T}} \Im (\zeta) \,d\mu_n(\zeta) \rightarrow \beta.
\end{equation}
Then for $\gamma = e^{i \beta}$, the following assertions are equivalent:
\begin{enumerate}
 \item
The sequence $\underbrace{\mu_{n} \utimes \mu_{n} \utimes \cdots \utimes \mu_{n}}_{k_{n}}$ converges weakly to $\nu_{\sutimes}^{\gamma,\sigma}$;
 \item
The sequence $\underbrace{\mu_{n} \circlearrowright \mu_{n} \circlearrowright \cdots \circlearrowright \mu_{n}}_{k_{n}}$ converges weakly to $\nu_{\circlearrowright}^{\beta,\sigma}$;
\end{enumerate}
\end{theorem}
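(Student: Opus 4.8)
The plan is to prove the two implications separately; in each the hypothesis \eqref{Eq:beta} serves to pin down the phase $\beta$ exactly (not merely modulo $2\pi$), which is precisely the obstruction exhibited by the counterexample preceding the theorem. For $(a)\Rightarrow(b)$ I would start from Proposition~\ref{Multiplicative_Proposition}, which already produces integers $\ell_n$, $\lambda_n=e^{2\pi i\ell_n/k_n}$ with $\lambda_n^{k_n}=1$, such that $\tilde\mu_n=\delta_{\lambda_n}\circlearrowright\mu_n$ satisfies $\tilde\mu_n^{\circlearrowright k_n}\to\nu_\circlearrowright^{\beta,\sigma}$. It then remains only to deduce from \eqref{Eq:beta} that $\ell_n=0$ for all large $n$, for then $\mu_n^{\circlearrowright k_n}=\tilde\mu_n^{\circlearrowright k_n}$ eventually and $(b)$ follows. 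Writing $m_n=\eta_{\mu_n}'(0)=\int_\mf{T}\zeta\,d\mu_n(\zeta)$ and recalling from the proof of Proposition~\ref{Multiplicative_Proposition} that $k_n\bigl[\log\bigl(\tfrac1z\eta_{\mu_n}(z)\bigr)+\tfrac{2\pi i\ell_n}{k_n}\bigr]\to i\beta-\int_\mf{T}\frac{1+\zeta z}{1-\zeta z}\,d\sigma(\zeta)$ locally uniformly, I evaluate at $z=0$ and take imaginary parts to get $k_n\arg(m_n)+2\pi\ell_n\to\beta$; since $\mu_n\to\delta_1$ forces $m_n\to1$, one has $k_n\arg(m_n)=\dfrac{k_n\Im(m_n)}{|m_n|\,\sin(\arg m_n)/\arg(m_n)}\to\beta$ by \eqref{Eq:beta}, whence $2\pi\ell_n\to0$ and, $\ell_n$ being an integer, $\ell_n=0$ eventually.

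For $(b)\Rightarrow(a)$ I would argue by subsequences: it suffices that every subsequence have a further subsequence along which $\mu_n^{\utimes k_n}\to\nu_{\sutimes}^{\gamma,\sigma}$ weakly. One first reduces to the case $\mu_n\to\delta_1$, which follows from $(b)$ together with \eqref{Eq:beta} much as in the proof of Proposition~\ref{Multiplicative_Proposition} (if $|m_n|$ stays away from $1$ the $k_n$-fold iterates $\eta_{\mu_n}^{\circ k_n}$ collapse to a constant, contradicting $\mu_n^{\circlearrowright k_n}\to\nu_\circlearrowright^{\beta,\sigma}$, and \eqref{Eq:beta} then rules out concentration of $\mu_n$ away from $1$). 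For each $r<1$ and $n$ large (depending on $r$), $\tfrac1z\eta_{\mu_n}(z)$ is close to $1$ and non-vanishing on $\overline{\mf{D}_r}$, so $h_n:=-k_n\log\bigl(\tfrac1z\eta_{\mu_n}(z)\bigr)$ is well defined and analytic there, and $\Re h_n(z)=k_n\log\bigl(|z|/|\eta_{\mu_n}(z)|\bigr)\geq0$ since $|\eta_{\mu_n}(z)|\leq|z|$ on $\mf{D}$. Moreover $\Re h_n(0)=-k_n\log|m_n|\to\sigma(\mf{T})$, because $(b)$ forces the mean $m_n^{k_n}$ of $\mu_n^{\circlearrowright k_n}$ to converge to $e^{i\beta-\sigma(\mf{T})}$, while $\Im h_n(0)=-k_n\arg(m_n)\to-\beta$ by \eqref{Eq:beta} as above. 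A family of analytic functions on $\mf{D}_r$ with non-negative real part whose values at $0$ converge is relatively compact, so by a diagonal argument over $r\uparrow1$, after passing to a further subsequence, $h_n\to h$ locally uniformly on $\mf{D}$ with $h$ analytic, $\Re h\geq0$, and $h(0)=\sigma(\mf{T})-i\beta$; by the Herglotz representation $h(z)=-i\beta+\int_\mf{T}\frac{1+\zeta z}{1-\zeta z}\,d\sigma'(\zeta)$ for a finite positive $\sigma'$ with $\sigma'(\mf{T})=\sigma(\mf{T})$. Expanding the exponential gives $k_n\bigl(\eta_{\mu_n}(z)-z\bigr)\to-zh(z)=A^{\beta,\sigma'}(z)$ locally uniformly on $\mf{D}$.

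At this point the Chernoff argument of Propositions~\ref{Prop:Variance} and~\ref{Multiplicative_Proposition} applies verbatim, now with $A^{\beta,\sigma'}$ (a legitimate monotone generator, by the discussion in Remark~\ref{Remark:Model}) in place of $A^{\beta,\sigma}$: the operators $V_n\colon\psi_\rho\mapsto\psi_\rho\circ\eta_{\mu_n}$ are contractions on the completion $\mc{A}$ of $\{\psi_\rho\}$ in $\norm{\cdot}_\infty$ on $\mf{D}_{1/2}$, $\{\psi_\rho\}$ is a core for $Bh=A^{\beta,\sigma'}h'$, $k_n(V_n-I)\to B$ on it, and Proposition~\ref{Prop:Chernoff} yields $\mu_n^{\circlearrowright k_n}\to\nu_\circlearrowright^{\beta,\sigma'}$ along the subsequence. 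But $(b)$ says $\mu_n^{\circlearrowright k_n}\to\nu_\circlearrowright^{\beta,\sigma}$, so $\nu_\circlearrowright^{\beta,\sigma'}=\nu_\circlearrowright^{\beta,\sigma}$; these have equal means ($\sigma'(\mf{T})=\sigma(\mf{T})$) and the same phase $\beta$, so by Remark~\ref{Remark:Model} the two convolution semigroups coincide, hence $A^{\beta,\sigma'}=A^{\beta,\sigma}$ and, by uniqueness of the Herglotz representation, $\sigma'=\sigma$. Therefore $\bigl(\tfrac1z\eta_{\mu_n}(z)\bigr)^{k_n}=e^{-h_n(z)}\to e^{-h(z)}=e^{i\beta}\exp\bigl(-\int_\mf{T}\frac{1+\zeta z}{1-\zeta z}\,d\sigma(\zeta)\bigr)=\tfrac1z\eta_{\nu_{\sutimes}^{\gamma,\sigma}}(z)$ along the subsequence, i.e. $\mu_n^{\utimes k_n}\to\nu_{\sutimes}^{\gamma,\sigma}$ weakly; as the limit is the same for every subsequence, the whole sequence converges.

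The main obstacle is the direction $(b)\Rightarrow(a)$, and the delicate point there is rigidifying the phase. From $(b)$ alone one only learns that $\mu_n^{\circlearrowright k_n}$ converges, hence — after the normal-families step — that $k_n(\eta_{\mu_n}-z)$ has a subsequential limit which is a monotone generator $A^{\beta',\sigma'}$ with $\nu_\circlearrowright^{\beta',\sigma'}=\nu_\circlearrowright^{\beta,\sigma}$; but by Remark~\ref{Remark:Model} this leaves $\beta'$ determined only up to the $2\pi$-ambiguity intrinsic to monotone semigroups, which is exactly the freedom used in the counterexample (different rotations $\ell_n$ give the same $\circlearrowright$-limit but incompatible Boolean behaviour). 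It is \eqref{Eq:beta}, entering through the exact limit $k_n\arg(m_n)\to\beta$ (equivalently, through $\Im h(0)=-\beta$) rather than through the weaker $m_n^{k_n}\to e^{i\beta-\sigma(\mf{T})}$, that forces $\beta'=\beta$ and closes the argument. A secondary nuisance, handled above by working on $\mf{D}_r$ and letting $r\uparrow1$, is that $\tfrac1z\eta_{\mu_n}(z)$ may vanish near $\partial\mf{D}$, so the logarithm cannot be taken globally from the outset.
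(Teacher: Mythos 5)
Your $(a)\Rightarrow(b)$ direction is essentially the paper's: invoke Proposition~\ref{Multiplicative_Proposition}, evaluate \eqref{Eq:Rotation} at $z=0$, and use \eqref{Eq:beta} to force $\ell_n=0$ eventually. For $(b)\Rightarrow(a)$, however, you take a genuinely different route. The paper's argument is short: the family $\set{\mu_n^{\sutimes k_n}}$ is automatically tight on $\mf{T}$; any subsequential weak limit is $\sutimes$-infinitely divisible by Wang's result \cite{Wang-Boolean}, hence of the form $\nu_{\sutimes}^{\gamma',\sigma'}$; comparing means gives $\gamma'=\gamma$, then applying $(a)\Rightarrow(b)$ along the subsequence together with the uniqueness in Remark~\ref{Remark:Model} pins down $\sigma'=\sigma$. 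You instead bypass Wang entirely: you take normal-family limits of the Herglotz functions $h_n=-k_n\log\bigl(\tfrac1z\eta_{\mu_n}(z)\bigr)$ (non-negative real part, pinned down at $0$), identify the limit as a generator $A^{\beta,\sigma'}$, re-run the Chernoff argument to conclude $\mu_n^{\circlearrowright k_n}\to\nu_\circlearrowright^{\beta,\sigma'}$ along the subsequence, and only then compare with $(b)$ to get $\sigma'=\sigma$; finally you exponentiate $h$ to recover the Boolean limit. Both arguments are subsequence arguments hinging on the same uniqueness statement in Remark~\ref{Remark:Model}, but yours is more self-contained (no appeal to Wang's infinite-divisibility-of-limits theorem) at the cost of being longer and repeating the Chernoff machinery. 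One shared soft spot, inherited from the paper: the claim that $\mu_n\to\delta_1$ (infinitesimality) follows from \eqref{Eq:beta} alone is not quite right in isolation (e.g.\ $\mu_n=\delta_{-1}$ satisfies \eqref{Eq:beta} with $\beta=0$), and really needs to be combined with hypothesis $(a)$ or $(b)$ and the constraint $m_n^{k_n}\to\gamma e^{-\sigma(\mf T)}$ to rule out concentration at $-1$; you gesture at this but, like the paper, do not spell it out.
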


\begin{proof}
Denote $a_n = \int_{\mf{T}} \zeta \,d\mu_n(\zeta)$. Either of the assumptions (a) or (b) implies that $a_n^{k_n} \rightarrow \gamma e^{-\sigma(\mf{T})}$, so that
\[
k_n \log \abs{a_n} \rightarrow - \sigma(\mf{T}).
\]
Since $\abs{a_n} \leq 1$, the series expansion of the logarithm immediately gives $k_n (\abs{a_n} - 1) \rightarrow - \sigma(\mf{T})$.
We also know that $k_n \Im(a_n) \rightarrow \beta$. Combining these we conclude that $k_n (\Re(a_n) - 1) \rightarrow - \sigma(\mf{T})$ and, after a little more work, $k_n(\log(a_n) - (a_n - 1)) \rightarrow 0$, where we again use the principal branch of the logarithm. Note also that the assumption \eqref{Eq:beta} implies that $\mu_n \rightarrow \delta_1$ weakly.

Suppose that $\mu_n^{\sutimes k_n} \rightarrow \nu_{\sutimes}^{\gamma,\sigma}$ in the weak topology. Plugging $z=0$ in equation~\eqref{Eq:Rotation}, using the computation above, and taking the imaginary part, we see that $\ell_n$ in Proposition~\ref{Multiplicative_Proposition} is zero, and it follows from that proposition that $\mu_n^{\circlearrowright k_n} \rightarrow \nu_{\circlearrowright}^{\beta,\sigma}$

Now suppose that $\mu_{n}^{\circlearrowright k_{n}} \rightarrow \nu_{\circlearrowright}^{\beta,\sigma}$. Since the measures $\mu_{n}^{\sutimes k_{n}}$ are supported on $\mathbb{T}$, they form a tight family.  Fix a subsequence converging to a measure $\nu_{\sutimes}^{\gamma',\sigma'}$ (the fact that the point is infinitely divisible may be found in \cite{Wang-Boolean}). Moreover, by comparing the means as above, it follows that $e^{i \beta} = \gamma' = \gamma$. Then by the reverse implication, it follows that $\mu_{n}^{\circlearrowright k_{n}} \rightarrow \nu_{\circlearrowright}^{\beta,\sigma'}$.  We may therefore conclude that $\sigma = \sigma'$, completing the proof.
\end{proof}


\def\cprime{$'$}
\providecommand{\bysame}{\leavevmode\hbox to3em{\hrulefill}\thinspace}
\providecommand{\MR}{\relax\ifhmode\unskip\space\fi MR }
\providecommand{\MRhref}[2]{%
  \href{http://www.ams.org/mathscinet-getitem?mr=#1}{#2}
}
\providecommand{\href}[2]{#2}

\end{document}